\documentclass[11pt]{article}%
\usepackage{amssymb}
\usepackage[all]{xy}
\usepackage[utf8]{inputenc}
\usepackage{amsfonts,dsfont}
\usepackage{amsmath,amsthm}
\usepackage{amssymb}
\usepackage{url}
\usepackage{eurosym}
\usepackage{hyperref}%
\setcounter{MaxMatrixCols}{30}%
\usepackage{graphics}
\usepackage{graphicx}
\usepackage{thmtools}
\usepackage{thm-restate}
\usepackage[ruled,vlined]{algorithm2e}
\usepackage{epsfig}
\usepackage{subfigure}
\providecommand{\U}[1]{\protect\rule{.1in}{.1in}}
\usepackage{color}
\usepackage{enumerate}

% Natbib setup for numeric style
\usepackage[square,sort,comma,numbers]{natbib}
% \NatBibNumeric
% \def\bibfont{\small}%
% \def\bibsep{\smallskipamount}%
% \def\bibhang{24pt}%
% \def\BIBand{and}%
% \def\newblock{\ }%
% \bibpunct[, ]{[}{]}{,}{n}{}{,}%

\textwidth 15.3cm   
\textheight 22cm   
\topmargin -2cm   
\evensidemargin -1mm   
\oddsidemargin -1mm   
\abovedisplayskip 3mm   
\belowdisplayskip 3mm   
\abovedisplayshortskip 0mm   
\belowdisplayshortskip 2mm

\def\X{\mathcal{X}}

\def\Y{\mathcal{Y}}

\def\S{\mathcal{S}}

\def\R{\mathbb{R}}
\def\RR{\mathbb{R}}

\def\NN{\mathbb{N}}

\def\conv{\textrm{conv}}
\def\convhull{\overline{\conv}}
\def\argmin{\textrm{arg}\min}

\def\ri{\textrm{ri}}

\def\epi{\textrm{epi}}

\def\tf{\tilde{f}}
\def\tfi{\tilde{f}_{i}}

\def\bfi{\bar{f}_{i}}

\def\tXi{\tilde{\X}_{i}}
\def\tX{\tilde \X}
\def\tx{\tilde x}
\def\txi{\tilde{x}_{i}}

\def\tri{\tilde{r}_{i}}

\def\tGamma{\tilde{\Gamma}}
\def\bx{\bar x}
\def\bxi{\bar{x}_{i}}
\def\bGamma{\bar{\Gamma}}
\def\tr{\tilde{r}}
\def\tri{\tilde{r}_{i}}
% ----------------------------------------------------------------
% THEOREMS -------------------------------------------------------
\newtheorem{thm}{Theorem}[section]
\newtheorem{ass}{Assumption}

\newtheorem{lem}[thm]{Lemma}

\newtheorem{prop}[thm]{Proposition}
\theoremstyle{definition}
\newtheorem{defn}[thm]{Definition}
\newtheorem{rem}{Remark}[section]
\newtheorem*{nota}{Notation}
\numberwithin{equation}{section}
% MATH -----------------------------------------------------------
% ----------------------------------------------------------------
%\renewcommand{\subjclassname}{Finance Subject Classification}
\definecolor{dgreen}{rgb}{0.00,0.49,0.00}
\definecolor{Brown}{rgb}{0.45,0.0,0.05}
\newcommand{\tcbl}[1]{\textcolor{black}{#1}}

% Keywords command
%\providecommand{\keywords}[1]
%{
%  \small	
%  \textbf{\textit{Keywords---}} #1
%}

\begin{document}
\title{Approximate Nash equilibria in large non-convex\\ aggregative games}
\author{Kang Liu\footnote{CMAP, Ecole Polytechnique; Inria Saclay; kang.liu@polytechnique.edu }, ~~Nadia Oudjane\footnote{EDF R\&D and  FIME (Finance for Energy Market Research Centre); nadia.oudjane@edf.fr}, ~~Cheng Wan\footnote{EDF R\&D and  FIME (Finance for Energy Market Research Centre); cheng.wan.05@polytechnique.org (corresponding author)}}

\maketitle
\begin{abstract}
This paper shows the existence of $\mathcal{O}(\frac{1}{n^\gamma})$-Nash equilibria in $n$-player noncooperative sum-aggregative games in which the players' cost functions, depending only on their own action and the average of all players' actions, are lower semicontinuous in the former while $\gamma$-H\"{o}lder continuous in the latter. Neither the action sets nor the cost functions need to be convex. For an important class of sum-aggregative games, which includes congestion games with $\gamma$ equal to 1, a gradient-proximal algorithm is used to construct $\mathcal{O}(\frac{1}{n})$-Nash equilibria with at most $\mathcal{O}(n^3)$ iterations. These results are applied to a numerical example concerning the demand-side management of an electricity system. The asymptotic performance of the algorithm when $n$ tends to infinity is illustrated.
\end{abstract}

\textbf{Keywords.} Shapley-Folkman lemma, sum-aggregative games, non-convex game, large finite game, $\epsilon$-Nash equilibrium, gradient-proximal algorithm, congestion game
\medskip

\textbf{MSC Class} {Primary: 91A06; secondary: 90C26}
%\section{Introduction}
%This paper is inspired by \cite{Starr1969}.
% 
%The main result is Theorem \ref{thm:main}.

\section{Introduction}\label{sec:introdution}
%\emph{In this paper, players minimize their costs so that the definition of equilibria and equilibrium conditions are in the opposite sense of the usual usage where players maximize their payoffs. Players have actions in Euclidean spaces. If it is not precised, then ``Nash equilibrium'' means a pure-strategy Nash equilibrium.}
%\medskip

This paper studies approximate pure-Nash equilibria (PNE) for $n$-player noncooperative games involving non-convexities in players’ costs or action sets. The goal is to show the existence of such approximate equilibria under certain conditions and to propose an algorithm that allows them to be calculated effectively in some specific cases. In particular, this paper focuses on a specific class of noncooperative games (which includes congestion games) referred to as sum-aggregative games (Selten \cite{Selten1970}, Corch\'on \cite{Corchon1994}, Jensen \cite{Jensen2010}). The cost of each player depends on the weighted sum of the other players' decisions. These games have practical applications in various aspects of political science, economics, social biology, and engineering, such as voting \cite{PalfreyRosenthal1983,MyersonWeber1993}, market competition \cite{MurphyAl1982}, public goods provision \cite{BasileAl2016,FoucartWan2018}, rent seeking \cite{PerezDavidVerdier1992}, population dynamics \cite{HofbauerSigmund1998}, traffic analysis \cite{Dafermos1980,MarcottePatricksson2007}, communications network control \cite{OrdaRomShimkin1993,LibmanOrda2001} and electrical system management \cite{Horta2018,Jaquotetal2020}. 
%In an aggregative game, for each particular player, the aggregate behavior of the other players matters instead of their respective identities. Another particularity of these applications is that the number of players are often very large so that the influence of a specific player's behavior on the others are not significant.
However, in these real-life situations, the players' action sets and their cost functions are often \emph{non-convex}. This paper is actually motivated by concrete applications for which it is unreasonable to neglect the \emph{non-convexities} inherent to the problem. In particular, we are interested in demand-side management in electrical systems \cite{Paulin}, where each flexible consumer is considered as a player trying to minimize her electricity bill by modulating her consumption (e.g., electric vehicle charging) which is subject to non-convex constraints. %We apply our theoretical results to this problem in Section \ref{sec:numerical}.
\smallskip

%This paper studies the approximation of pure-strategy Nash equilibria (PNE for short) in a specific class of finite-player noncooperative games, referred to as large non-convex aggregative games. Recall that the cost functions of players in an aggregative game depend on their own action (i.e., pure-strategy) and the aggregate, i.e., weighted sum, of all the players' actions. In a large non-convex aggregative game, players may have non-convex action sets and non-convex cost functions, and the number of players are relatively large so that the impact of each particular player on the aggregate behavior is relatively small, in a sense to be specified later.

%\paragraph{Background and motivation.} 
In the convex framework,  a PNE is known to exist under mild regularity conditions (see, for example, Rosen \cite{Rosen1965}). Outside the convex framework, it is  generally difficult to provide \emph{existence} results for PNEs and \emph{approximation algorithms} with performance guarantees. 
%\paragraph{Concise statement of the contributions.}
% 
%The main contribution of the present paper is threefold. 
Our work addresses these two issues and makes the following contributions.
\smallskip

\noindent (i) Theoretically, Proposition \ref{thm:existepsilon} and Theorem \ref{thm:main} show the  existence of $\mathcal{O}(\frac{1}{n^\gamma})$-PNEs for $n$-player non-convex sum-aggregative games in which the players' cost functions 
%depend only on their own action and the average of all the players' actions,  is lower semicontinuous in the former, and
are $\gamma$-H\"{o}lder continuous with respect to the aggregate. Neither the action sets nor the cost functions need to be convex.
\smallskip

\noindent (ii) Algorithmically,  
%for an important class of $n$-player non-convex sum-aggregative games including 
in the specific case of congestion games in which the cost functions are Lipschitz continuous with respect to the aggregate (i.e., $\gamma=1$),
%and where action sets are compact subsets of Euclidean spaces, 
we present an iterative gradient-proximal algorithm to compute an $\mathcal{O}(\frac{1}{n})$-PNE of the original non-convex game within at most $\mathcal{O}(n^3)$ iterations, according to Theorem \ref{thm:congestionPNE}. 
%
%$\epsilon$-PNE of the convexified game. %which satisfies the above mentioned stability condition (Proposition \ref{lm:Nash-disagrregative}).
%Theorem \ref{thm:congestionPNE} ensures the performance of this algorithm 
%which computes 
%which allows to recover 
%an 
%In the case where a \tcr{``Shapley-Folkman disaggregation''} is difficult to carry out,
%We also provide an extremely fast, easy and distributed method  to obtain an $\mathcal{O}(\frac{1}{\sqrt{n}})$-mixed-strategy Nash equilibrium after the same number of iterations (Corollary \ref{cor:congestionMNE}).

%\paragraph{Structure of the paper.} 
%The paper is organized as follows.  Section \ref{sec:exist} introduces $n$-player non-convex aggregative games and their convexified game, then shows the existence of an $\mathcal{O}(\frac{1}{n^\gamma})$-Nash equilibrium in such non-convex games when the players' cost functions is lower semicontinuous in their own action while $\gamma$-H\"{o}lder continuous in the  average of all the players' actions. Section \ref{sec:congestion} presents a proximal best-response algorithm to obtain an $\mathcal{O}(\frac{1}{n})$-Nash equilibrium in a subclass of non-convex aggregative games which includes congestion games with at most $\mathcal{O}(n^3)$ iterations as well as a decentralized randomized disaggregation method to obtain an $\mathcal{O}(\frac{1}{\sqrt{n}})$-mixed-strategy Nash equilibrium in case that a centralized ``Shapley-Folkman disaggregation'' method is not at hand. 
\smallskip

\noindent (iii) Practically, the usefulness of this approach is demonstrated in  Section \ref{sec:numerical}, where a numerical simulation with the gradient-proximal algorithm is performed for a demand-side management problem involving flexible electric vehicle charging. 
\smallskip

%The existence of pure-strategy Nash equilibrium has been proved only for some specific classes of finite-player non-cooperative games. 
%\bigskip

%\paragraph{Methodology and related work.}
The originality of this paper lies in the circumvention of the non-convexity through the exploitation 
%the particular setting of large sum-aggregative games to get closer to the convex framework, when the number of players is large. 
the fact that large sum-aggregative games approach a convex framework when the number of players is large. 
The counterpart of this approach is  to search for an $\epsilon$-PNE (cf. Definition \ref{def:epsilonNash}) instead of an exact PNE. The main inspirations for the present work are \cite{Starr1969} (in economics) and \cite{Wang2017} (in optimization). Starr \cite{Starr1969} was interested in computing general equilibria for a non-convex competitive economy in terms of price and quantity, while Wang \cite{Wang2017} considered large-scale non-convex separable optimization problems coupled by sum-aggregative terms. In both cases, the authors proposed to convexify the problem, taking advantage of the large number of agents or subproblems
%non-convex admissible sets
to bound the error induced by the convexification thanks to the Shapley-Folkman Lemma (cf. Lemma \ref{lm:SF}). 
%Such a profile $(x_i)_i$ shall be called a ``Shapley-Folkman disaggregation'' of $x$ in this paper. 
Roughly speaking, the Shapley-Folkman Lemma states that the Minkowski sum of a finite number of sets in a Euclidean space is close to convex when the number of sets is very large compared with their common  dimension. 
%
%This contribution makes use of similar approaches to Starr \cite{Starr1969}  and Wang \cite{Wang2017}, 
%employed 
%who both used the same technique. Starr (resp. Wang) first found an economy equilibrium of the convexified economy (resp. an optimum of the convexified optimization problem), then used the Shapley-Folkman lemma to find a profile of choices by the agents (resp. subproblems) that are almost all feasible except for very few agents (resp. subproblems), such that the aggregate term remains unchanged. Then, they showed that such a profile is an approximation of the exact equilibrium (resp. optimum). 
Our first contribution consists of two novelties. First, in Proposition \ref{thm:existepsilon}, we extend their approach to non-convex sum-aggregative games to show the existence of an $\epsilon$-PNE.
%, by the ``disaggregation'' of an exact PNE of an auxiliary convexified game. 
Second, in Theorem \ref{thm:main} we show that one can also construct an $\epsilon$-PNE of the non-convex game 
%by the ``disaggregation'' of 
from an $\epsilon$-PNE 
%(instead of an exact PNE) 
of the auxiliary convexified game provided that some \textit{stability condition} is satisfied. 
%satisfying some stability condition. 
This second novelty is more significant, and it is crucial for our algorithmic contribution. % (C2) which provides a practical algorithm to compute an $\epsilon$-PNE of the non-convex game in polynomial time.
% Compared with their work, our novelty consists in studying \emph{non-convex} games.
Our second contribution consists in proposing an algorithm returning an $\epsilon$-PNE of the convexified game. %We then develop a specific algorithm to ensure that the resulting 
This $\epsilon$-PNE verifies the stability condition that allows us to recover an $\epsilon$-PNE of the original non-convex game.
\smallskip

\paragraph{Related works.} 
%When players have a finite number of actions, Mondrer and Shapley \cite{MondrerShapley1996} show that potential games, in which a so-called potential function exists, admit PNEs. As a matter of fact, every finite potential game is isomorphic to a finite congestion game introduced by Rosenthal \cite{Rosenthal1973}, in which players have equal weights and non-player-specific resource cost functions. Recall that, in a congestion game, resources are shared among players, with each resource having a cost function depending on the aggregate load applied to it. However, when players have player-specific weights and/or resource cost functions, a potential function no longer necessarily exists. 
%
The existence of PNEs is not guaranteed in non-convex games, except in some particular cases. %(cf. Milchtaich \cite{Milchtaich1996}). 
For example, for games in which players have a finite number of actions, the existence of PNE is known for Rosenthal's congestion games \cite{Rosenthal1973} and some other specific class of congestion games \cite{Tran2011,Meyers2006}.  %integer-splittable congestion games in which the unequally integer-weighted players can split their weight into unit-weight blocks, the existence of PNE is shown by Tran-Thanh et al. \cite{Tran2011} for the case where a pure strategy consists in a single resource and the non-player-specific resource cost functions are convex and monotone, and by Meyers \cite{Meyers2006} for the case where the non-player-specific resource cost functions are linear. 
For games with discrete (but not necessarily finitely many) strategies, Sagratella \cite{Sagratella2016} 
%proposes a branching method to compute the whole solution set of PNEs. He 
proved the existence of PNEs for a particular class of such games and proposed an algorithm leading to one of the equilibria. %He generalized the algorithm to games with mixed-integer strategies \cite{Sagratella2017} mixed integer.
However, when the players' cost functions are non-convex and/or their action sets are non-convex but not necessarily finite, there is no general result for the existence of PNEs. 

%When players have a continuum of strategies, Rosen \cite{Rosen1965} shows that a Nash equilibrium exists if each player has a convex and compact action set in an Euclidean space and her cost function is continuous in the action profile and convex in her own action. In an atomic splittable congestion game  in which unequally weighted players can split their weight into arbitrary blocks, if the resource cost functions are continuous, then the conditions for Rosen's results to be valid are satisfied (cf. Orda, Rom and Shimkin \cite{OrdaRomShimkin1993}).  
\smallskip

%When the players' action sets or their cost functions are non-convex, the existence of pure-Nash equilibria is not guaranteed. Although mixed-strategy Nash equilibria always exist  in the case of a finite number of strategies \cite{Nash1950}, pure strategies are often more relevant than mixed strategies (where players play pure strategies according to a certain probability distribution on their action set) for engineering applications. Besides, action sets are often not discrete. This is why the objective of the present paper is to find approximate PNEs that are reasonable substitutes, at least for practical purpose, for non-convex aggregative games. More precisely, this paper aims to prove the existence and provide approximation schemes for $\epsilon(n)$-Nash equilibria with $\epsilon(n)$ vanishing to zero, when the number of players, $n$, is sufficiently large.

%\tcr{Once the existence is proved}, one may be interested in computing approximate PNEs \tcr{for large non-convex sum-aggregative games}. 
%Concerning the algorithms 
Concerning algorithms for the computation of ($\epsilon$-)PNE, there are few results. The existing results are almost restricted to some special cases in the convex setting.  %are known only for some special cases. 
%Convergent algorithms are known only for some special cases,
%For example, for potential games or (strongly) monotone games. 
A common approach is to solve the variational inequality characterizing the PNEs (cf. Facchinei and Pang \cite{FacchineiPang2003} and the references therein). 
Scutari et al. \cite{ScutariAl2014} considered generic $n$-player games that need not be large nor aggregative but must have a strongly monotone inequality characterizing the PNE. They used proximal best-reply algorithms to solve this variational inequality. % characterizing the PNE. However they assume quite strong conditions on the game such as the strict or evenly strong monotinicity of this variational inequality. 
Paccagnan, Kamgarpour and Lygeros \cite{PaccagnanKamgarpourLygeros2016}  considered a specific convex aggregative game and used a decentralized gradient projection algorithm to solve the strongly monotone variational inequality characterizing the PNE. 
Paccagnan et al. \cite{PaccagnanAl2019} studied $\epsilon$-PNEs in large convex aggregative games with coupling constraints. Their methodology is close to ours in the sense that they only look for an $\epsilon$-PNE (which they call the Wardrop equilibrium) instead of an exact PNE. 
They used, respectively, a decentralized gradient projection algorithm and a decentralized best-reply (to the aggregate term) algorithm to solve the variational inequality characterizing this Wardrop equilibrium.
\smallskip

Finally, the Shapley-Folkman lemma has been extensively applied in non-convex optimization for its convexification effect. Aubin and Ekeland \cite{AubinEkeland1976} used the lemma to derive an upper bound for the duality gap in an additive, separable non-convex optimization problem. Since then, quite a few papers have extended or sharpened this result (cf. Ekeland and Temam \cite{EkelandTemam1999}, Bertsekas and coauthors \cite{Bertsekas1979,BertsekasSandell1982}, Pappalardo \cite{Pappalardo1986}, Kerdreux et al. \cite{KerdreuxAl2019}, Bi and Tang \cite{BiTang2020}). These theoretical results have applications in engineering problems, such as the large-scale unit commitment problem \cite{LauerAl1982,BertsekasAl1983}, the  optimization of plug-in electric vehicle charging \cite{VujanicAl2016}, the optimization of multicarrier communication systems \cite{YuLui2006}, supply-chain management \cite{VujanicAl2014}, and spatial graphical model estimation \cite{FangLiuWang2019}.
%\bigskip

\paragraph{Organization.} Section \ref{sec:exist} focuses on the existence of $\epsilon$-PNE in large non-convex aggregative games. Section \ref{sec:congestion} presents an algorithm to compute such an $\epsilon$-equilibrium for congestion games. A numerical application in Section \ref{sec:numerical} illustrates the usefulness of our results. Section \ref{sec:perspectives} concludes.
\color{black}

\paragraph{Notation.} In a Euclidean space, $\| \cdot \|$ denotes the $l^2$-norm. For a point $x\in \RR^d$ and a subset $\X$ of $\RR^d$, $d(x, \X):=\inf_{y\in \X} \{\|x-y\|\}$ is the distance from the point to the subset. For two subsets $\X$ and $\Y$ of $\RR^d$, their Minkowski sum is the set $\{x+y \, | \, x\in \X, y\in \Y\}$. For $x\in \RR^d$ and $r\in \RR^{+}$, $B(x,r):=\{y\in \RR^{d} \,|\, \|y-x\|\leq r\}$, with the $r$-radial ball centered on $x$. 

For a matrix $A\in \RR^{d} \times \RR^{q}$, $\| \cdot \|_2$ is the 2--norm of the matrix: $\|A\|_2:=\sqrt{\lambda_{\max}(A^\tau A)}$ where $A^\tau$ is the transpose of $A$ and $\lambda_{max}(A^\tau A)$ stands for the largest eigenvalue of the matrix $A^\tau A$.
\medskip

The proof of Proposition \ref{thm:existNEconv} and the lemmata used for the proof are given in Appendix A.  All the other proofs and intermediate results are contained in Appendix B.

\section{Existence of $\epsilon$-PNE in large non-convex sum-aggregative games}\label{sec:exist}
%-------------------------------------------
\subsection{A non-convex sum-aggregative game and its convexification}
Consider an $n$-player noncooperative game $\Gamma$. The players are indexed over $N=\{1,2,\cdots, n\}$. Each player $i\in N$ has an action set $\X_i\subset \R^d$, which is closed and bounded but not necessarily convex. Let $\tXi := \conv(\X_i)$ be the convex hull of $\X_i$ (which is also closed and bounded) and denote $\X:=\prod_{i\in N} \X_i$, $\tX:= \prod_{i\in N} \tXi $, $\tX_{-i} := \prod_{j\in N_{-i}}\tX_j$, where $N_{-i}:=N\setminus\{i\}$. Let  the constant  $\Delta>0$  be such that, for all $i\in N$, the compact set $\X_i$ has a diameter $\vert \X_i\vert := \max_{x_i, y_i \in \X_i}\|x_i - y_i\|$ that is not greater than $\Delta$. 

As usual, let $x_{-i}$ denote the profile of actions of all the players except player $i$. Each player $i$ has a real-valued cost function $f_i$ defined on $\X_i \times \tX_{-i}$, which has the following specific form: %for each player $i\in N$,
\begin{equation}
\label{eq:cost}
f_i(x_i, x_{-i}):=\theta_i \bigg(x_i,\, \frac{1}{n}\sum_{j\in N} A_j x_j \bigg)\ ,\quad\textrm{for any}\ x_i \in \X_i\, , \ x_{-i} \in \tX_{-i}\ ,
\end{equation}
where each $A_j$ is a $q \times d$ matrix for all $j\in N$, and $\theta_i$ is a real-valued function defined on $\X_i \times \Omega$, with $\Omega\subset \RR^q$ being a neighborhood of $\{ \frac{1}{n} \sum_{j\in N} A_j y_j\, \vert\, y_j \in \tX_j ,\, \forall j\in N\}$. 

Let the constant $M>0$ be such that $\| A_i \|_{2} \leq M$ for each $i\in N$.
\medskip

\tcbl{The game $\Gamma$ is a sum-aggregative game because each player's cost function depends on her own action and an aggregate of all the players' actions.}
%Remark that $\Gamma$ is a weighted sum-aggregative game. It is a generalization of sum-aggregative games, which correspond to the specific case where, for each $j\in N$, $A_j$ reduces to the identity matrix. For the sake of simplicity, we still call $\Gamma$ a sum-aggregative game.

\begin{defn}[$\epsilon$-pure Nash equilibrium]\label{def:epsilonNash}
For a constant $\epsilon \geq 0$, an \emph{$\epsilon$-pure Nash equilibrium ($\epsilon$-PNE)}  $x^\epsilon\in \X$ in game $\Gamma$ is a profile of actions of the $n$ players such that, for each player $i \in N$,
\begin{equation*}
f_i(x^\epsilon_i, x^\epsilon_{-i}) \leq f_i(x_i, x^\epsilon_{-i}) + \epsilon \, , \textrm{ for any } x_i \in \X_i\, .
\end{equation*}
If $\epsilon=0$, then $x^\epsilon$ is a \emph{pure Nash equilibrium (PNE)}.
\end{defn}

\tcbl{This definition of $\epsilon$-PNE corresponds to the notion of \emph{additively} 
$\epsilon$-PNE in the literature.}
%For the sake of simplicity, we omit ``additively'' whenever it causes no confusion.
\medskip

For non-convex games (in which either action sets or cost functions are not convex), the existence of a PNE is not \tcbl{established for the general cases}. 
This paper uses an auxiliary convexified version of the non-convex game, which is helpful both in the proof of the existence of an $\epsilon$-PNE of the non-convex game and in the construction of such an approximate PNE.

%In this paper, we show the existence with the help of the convexified game associated with $\Gamma$ defined as follows.
%%
%%
\begin{defn}[Convexified game and generators]\label{def:w_i}
The \emph{convexified game} $\tGamma$ associated with $\Gamma$ is a noncooperative game played by  $n$ players. Each player $i\in N$ has an action set $\tXi$ and a real-valued cost function $\tfi$ defined on $\tX$ as follows: for all $x\in \tX$,
\begin{equation}
\label{eq:costConv}
\tfi(x_i, x_{-i})
=\inf_{(\alpha^k)_{k=1}^{d+1}\in \S_{d}; \, (z^k)_{k=1}^{d+1} \in \X_i^{d+1} } \bigg\{\sum_{k=1}^{d+1} \alpha^k f_i(z^k,x_{-i})\,\Big\vert \, x_i=\sum_{k=1}^{d+1} \alpha^k z^k \, \bigg\} \, ,
\end{equation}
where $\mathcal{S}_{d}:=\{\alpha=(\alpha^k)_{k=1}^{d+1} \in\RR^{d+1} \, \vert\, \forall k\,, \alpha^k\geq 0\,, \, \sum_{k=1}^{d+1} \alpha^k=1\}$ denotes the probability simplex of dimension $d$.

\tcbl{For all $i\in N$ and $x\in \tX$, let a minimizer in \eqref{eq:costConv}
%. An element  in $\mathcal{W}(i,x)$ is
be generically denoted by $(\alpha(i,x), z(i, x)) \in \mathcal{S}_d \times \X_i^{d+1}$. For such a vector $z(i,x)$,  we denote the set of its $d+1$ components by $Z(i,x)$ and call it a \emph{generator for $(i,x)$.}} %, and the corresponding $(d+1)$-dimensional vector of coefficients by $\alpha(i,x)$.

%In particular, such a $Z(i,x)$ is a set of $d+1$ points ($z^k$, $k=1,\ldots, d+1$) in $\X_i$ that attains the minimum in \eqref{eq:costConv} and is called a \emph{generator for $(i,x)$}. 
\end{defn}

%
%Let $W_i(x_i, x_{-i}):= \bigcup_{\lambda \in \Lambda_i}W^{\lambda}_i(x_i, x_{-i})$ denote the collection of all groups of $d+1$ elements (not necessarily distinct to each other) of $\X_i$ that attain the minimum in the definition of  $\tfi(x_i, x_{-i})$ (cf. \eqref{eq:costConv}), 
%
%indexed by indices $\Lambda_i\subset\mathbb{N}$. Such a group, $W^{\lambda}_i(x_i, x_{-i})$, is called a \emph{generator} of $(x_i, \tfi(x_i, x_{-i}))$. In other words, for any $\lambda\in \Lambda_i$, $W_i^\lambda (x)=\{x^{1,\lambda}_i, \ldots, x_i^{d+1,\lambda}\}\subset \X_i$ and there is $\alpha^{\lambda}\in \S_d$ such that $\sum_k \alpha^{k,\lambda} x^{k,\lambda}_i = x_i$ and $\sum_k \alpha^{k,\lambda} f(x^{k,\lambda}_i,x_{-i} )= \tfi(x_i, x_{-i})$.

For a lower semicontinuous (l.s.c.) function, equation \eqref{eq:costConv} just defines its convex hull (cf. Lemma \ref{lm1}). This particular form of definition is proposed in \cite{Bertsekas1996}.
\smallskip

PNEs and $\epsilon$-PNEs for the convexified game $\tGamma$ are defined in the same way as for the game $\Gamma$ in Definition \ref{def:epsilonNash}. 
\bigskip

The remainder of this subsection is dedicated to a preliminary analysis of the convexified game.

First let us introduce an assumption \tcbl{on the functions $\theta_i$'s characterizing the cost functions according to \eqref{eq:cost}}. It ensures the existence of generators for all $(i,x)\in N\times \X$ (cf. Lemma \ref{lm1} for a proof).

\begin{ass}\label{ass:lsc} ~~ \\
\noindent (1) 
For any player $i\in N$, for any $y\in \Omega \tcbl{\subset \R^q}$, the function $x_i \mapsto \theta_i(x_i, y)$ is l.s.c. on $\X_i$.

\noindent (2) There exist constants $H>0, \gamma>0$ such that,
for all $i\in N$,  for all $x_i\in \X_i$, the function $y\mapsto \theta_i (x_i,y)$ is $(H,\gamma)$-H\"older continuous on $y\in \Omega$, i.e., 
\begin{equation}
\label{eq:ass:theta}
\vert \theta_i(x_i,y')-\theta_i(x_i,y)\vert \leq H \Vert y'-y\Vert^\gamma\ .
\end{equation} 
\end{ass}

\begin{rem}
 It is straightforward from Assumption \ref{ass:lsc} that $f_i(\cdot, x_{-i})$ is l.s.c. in $x_i\in \X_i$ for any fixed $x_{-i}\in \tX_{-i}$.
\end{rem}

According to Lemma \ref{lm1}, $x_i \mapsto \tfi(x_i, x_{-i})$ is convex and l.s.c on $\tXi$. Its subdifferential exists; let it be denoted  by $\partial_i \tfi(\cdot, x_{-i})$. Then, for each $x_i \in \tXi$, $\partial_i \tfi(x_i, x_{-i})$ is a nonempty convex subset of $\R^d$.

\begin{prop}[Existence of PNE in $\tGamma$]\label{thm:existNEconv}
Under Assumption \ref{ass:lsc}, the convexified game $\tGamma$ admits a PNE.
\end{prop}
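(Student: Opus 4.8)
The plan is to realize a PNE of $\tGamma$ as a fixed point of the joint best-response correspondence and invoke Kakutani's theorem. For each player $i$ and each $x_{-i}\in\tX_{-i}$, I would set
\[
BR_i(x_{-i}):=\argmin_{x_i\in\tXi}\tfi(x_i,x_{-i}).
\]
By Lemma \ref{lm1}, $\tfi(\cdot,x_{-i})$ is convex and l.s.c.\ on the compact convex set $\tXi$, so the infimum is attained and $BR_i(x_{-i})$ is a nonempty, compact, convex subset of $\tXi$. Writing $BR(x):=\prod_{i\in N}BR_i(x_{-i})$, a fixed point $x^\ast\in BR(x^\ast)$ means $x_i^\ast\in BR_i(x_{-i}^\ast)$ for every $i$, which is exactly a PNE of $\tGamma$. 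Since $\tX$ is nonempty, compact and convex, the whole proof reduces to checking that $BR$ has closed graph (equivalently, that it is upper hemicontinuous with nonempty convex values).

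The key ingredient is a uniform modulus of continuity of $\tfi$ in the off-diagonal profile, inherited from the Hölder Assumption \ref{ass:lsc}(2). Fix $x_i$ and two profiles $x_{-i},x'_{-i}\in\tX_{-i}$, and let $(z^k,\alpha^k)_k$ be a generator for $(i,(x_i,x_{-i}))$, which exists by Lemma \ref{lm1}. Replacing $x_{-i}$ by $x'_{-i}$ shifts the aggregate entering each term $\theta_i(z^k,\cdot)$ by the \emph{same} vector $\tfrac1n\sum_{j\in N_{-i}}A_j(x'_j-x_j)$, independently of $k$; using this generator as a (suboptimal) representation of $x_i$ together with Assumption \ref{ass:lsc}(2) gives
\[
\tfi(x_i,x'_{-i})\ \le\ \tfi(x_i,x_{-i})+H\Big\|\tfrac1n\sum_{j\in N_{-i}}A_j(x'_j-x_j)\Big\|^{\gamma}.
\]
Exchanging the roles of the two profiles yields the reverse bound, so $|\tfi(x_i,x'_{-i})-\tfi(x_i,x_{-i})|\le H\big(\tfrac{M}{n}\sum_{j\in N_{-i}}\|x'_j-x_j\|\big)^{\gamma}$, \emph{uniformly in} $x_i$. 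Combined with lower semicontinuity in the own action, this uniform continuity in $x_{-i}$ makes $\tfi$ jointly lower semicontinuous on $\tX$.

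The closed-graph property is where I expect the real difficulty, precisely because $\tfi$ is only l.s.c.\ (not continuous) in the own action, so Berge's maximum theorem does not apply off the shelf. I would argue it directly: take $x_{-i}^{(m)}\to x_{-i}$ and $x_i^{(m)}\in BR_i(x_{-i}^{(m)})$ with $x_i^{(m)}\to x_i$. The uniform continuity in $x_{-i}$ gives $\tfi(x_i^{(m)},x_{-i}^{(m)})=\tfi(x_i^{(m)},x_{-i})+o(1)$ and $\tfi(w,x_{-i}^{(m)})\to\tfi(w,x_{-i})$ for each fixed $w\in\tXi$, while lower semicontinuity in the first argument gives $\liminf_m\tfi(x_i^{(m)},x_{-i})\ge\tfi(x_i,x_{-i})$. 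Passing to the inferior limit in the optimality inequality $\tfi(x_i^{(m)},x_{-i}^{(m)})\le\tfi(w,x_{-i}^{(m)})$ then yields $\tfi(x_i,x_{-i})\le\tfi(w,x_{-i})$ for all $w\in\tXi$, i.e.\ $x_i\in BR_i(x_{-i})$. Hence each $BR_i$, and therefore $BR$, has closed graph. All hypotheses of Kakutani's fixed-point theorem are then satisfied, and any fixed point of $BR$ is a PNE of $\tGamma$. The only delicate point is this upper hemicontinuity of the argmin under mere lower semicontinuity, which is rescued by the uniform Hölder control in the aggregate term; the remaining verifications (nonempty, compact, convex values; compactness and convexity of $\tX$) are routine.
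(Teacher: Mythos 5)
Your proof is correct and follows essentially the same route as the paper: the paper reduces the statement to an l.s.c.\ extension of Rosen's theorem (Theorem \ref{thm:lsc}), proved by applying Kakutani's fixed-point theorem to the correspondence $x\mapsto\argmin_{z}\sum_i \tfi(z_i,x_{-i})$ — which, by separability in $z$, is exactly your product map $BR$ — and it handles the mere lower semicontinuity in the own action precisely as you do, by combining it with (uniform equi)continuity in the opponents' profile. Your explicit derivation of the uniform H\"older modulus of $\tfi$ in $x_{-i}$ from the generators is just the verification, left implicit in the paper, that the convexified costs satisfy the equicontinuity hypothesis of Theorem \ref{thm:lsc}.
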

\begin{proof}
This results from Theorem \ref{thm:lsc} in Appendix A.
\end{proof}

\begin{rem}
\tcbl{Theorem \ref{thm:lsc} is a natural extension of Rosen's theorem on the existence of PNEs in games with convex continuous cost functions \cite{Rosen1965} to the case where the cost functions are only l.s.c. instead of being continuous with respect to the players' own actions. } 
%This extension is not trivial. Our proof follows the same lines as Rosen, and relies on the use of the Kakutani's fixed point theorem \cite{Kakutani1941}. In particular, we show in Lemma \ref{lm:rosen} that the conditions required to apply the Kakutani's theorem are fulfilled in our case of l.s.c. convex games.
\smallskip

The following example shows that even the continuity of $f_i$ on $\X_i$ cannot guarantee the continuity of $\tfi$ on $\tXi$, meaning that Rosen's theorem is not sufficient here.

Consider $d=3$, $\X_i=T\cup B \cup S$ where $T=\{(x^1,x^2,x^3) \in \RR^3 \vert (x^1)^2+(x^2)^2=1, x^3=1\}$, $B=\{(x^1,x^2,x^3) \in \RR^3 \vert (x^1)^2+(x^2)^2=1, x^3=-1\}$, and $S=\{(x^1,x^2,x^3) \in \RR^3 \vert x^1=1, x^2=0, -1\leq x^3 \leq 1\}$; $f_i$ is independent of $x_{-i}$, and $f_i(x)=0$ for $x\in T\cup B$, $f_i(x)=|x^3|-1$ for $x\in S$. Then, for all $x\in \{(x^1,x^2,x^3) \in \RR^3 \vert (x^1)^2+(x^2)^2=1, x^3=0\} \subset \partial \tXi$, $\tfi(x)=0$ except for $x^*=(1,0,0)$, but $\tfi(x^*)=f_i(x^*)=-1$.
\end{rem}

\subsection{Existence and construction of an $\epsilon$-PNE of the non-convex game}
%\tcr{Expliquer pourquoi definir $\eta$-disggregatable}
%As explained in the Introduction, once a Nash equilibrium $\tx$ of the convexified game $\tGamma$ is found, the next step is to find a feasible action profile $x^*$ of the original non-convex game $\Gamma$ such that $\frac{1}{n}\sum_i A_i x^*_i \approx \frac{1}{n}\sum_i A_i \tx_i$ and $x^*_i$ is (almost) a best response to $\frac{1}{n}\sum_i A_i\tx_i$, in the sense that $x^*_i$ (almost) minimizes $\theta_i(\cdot, \frac{1}{n}\sum_i A_i\tx_i)$ on $X_i$. Indeed, as is shown below, such a feasible action profile $x^*$ exists, with $x^*_i$ in $W_i(\tx)$ for each $i$, thanks to the Shapley-Folkman lemma.%a generator of $(\txi, \tfi(\txi,\tx_{-i}))$ for each $i$.  

The following proposition shows the existence of an $\epsilon$-PNE in the non-convex game $\Gamma$ and its construction from an exact PNE of the convexified game $\tGamma$. \tcbl{In particular, we observe that $\epsilon$ is small when the number of players $n$ is large with respect to $q$, the dimension of the space in which the aggregate $\frac{1}{n} \sum_{i\in N} A_i x_i$ lies.}

\begin{prop}[Existence of $\epsilon$-PNE]	\label{thm:existepsilon}
Under Assumption \ref{ass:lsc}, the non-convex game $\Gamma$ admits an $\epsilon$-PNE, where $\epsilon = 2 H (\frac{(\sqrt{q}+1)M\Delta}{n})^\gamma$.  % \textcolor{red}{ $\epsilon = 2 H (\frac{(\sqrt{q}+1)M\Delta}{n})^\gamma$. }

In particular, 
suppose that $\tx\in \tX$ is a PNE in $\tGamma$ (which exists according to Proposition \ref{thm:existNEconv}), and $Z(i,\tx)$ is an arbitrary generator for each player $i$; then, $x^{*}\in \X$ such that
%the one in Lemma \ref{lm:SF} (cf. eq. \eqref{eq:argmin}). 
\begin{equation}\label{eq:argmin}
x^{*}\in \underset{x_i\in Z(i,\tx),\, i\in N }{\operatorname{argmin}} \Big\|\sum_{i\in N} A_i \tx_i-\sum_{i\in N} A_i x_i\Big \|^2\, ,
\end{equation}
is an $\epsilon$-PNE of the non-convex game $\Gamma$.
%, where $\tilde \epsilon = 2 H (\frac{(q+1)M\Delta}{n})^\gamma$. 
% In particular, this shows the existence of $2 H (\frac{(q+1)M\Delta}{n})^\gamma$-Nash equilibrium of $\Gamma$,
	%	
%In particular, this shows the existence of $2 H (\frac{(q+1)M\Delta}{n})^\gamma$-PNE of $\Gamma$, by taking $\tx$ to be a PNE of $\tGamma$, .
\end{prop}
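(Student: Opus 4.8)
The plan is to \emph{disaggregate} the convexified equilibrium $\tx$ into a genuine profile $x^{*}\in\X$ that simultaneously (A) barely perturbs the aggregate $\frac1n\sum_j A_j x_j$ and (B) leaves every player almost best-responding. The two ingredients are the Shapley--Folkman lemma (Lemma \ref{lm:SF}) for (A) and the optimality of $\tx$ in $\tGamma$ combined with the structure of generators for (B); a short Hölder-continuity computation then glues them together.

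For (A), I would work in the aggregate space $\R^q$. For each $i$ set $S_i:=\{A_i z : z\in Z(i,\tx)\}\subset\R^q$, a finite set whose circumradius is at most its diameter $\max_{z,z'\in\X_i}\|A_i z-A_i z'\|\le M\Delta$. Writing $\tx_i=\sum_k \alpha^k z^k$ with $z^k\in Z(i,\tx)$ shows $A_i\tx_i\in\conv(S_i)$, hence $\sum_i A_i\tx_i\in\sum_i\conv(S_i)=\conv\!\big(\sum_i S_i\big)$. The Starr-refined Shapley--Folkman estimate then produces a selection of genuine points $\hat x_i\in Z(i,\tx)$ with $\big\|\sum_i A_i\tx_i-\sum_i A_i\hat x_i\big\|\le\sqrt q\,M\Delta$. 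Since $x^{*}$ is the minimizer in \eqref{eq:argmin}, it does at least as well, so $\big\|\sum_i A_i\tx_i-\sum_i A_i x^{*}_i\big\|\le\sqrt q\,M\Delta$; the two induced averages then differ by at most $\sqrt q\,M\Delta/n$.

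For (B), the key observation is that every positively-weighted generator point is an \emph{exact} best response in the original game. Indeed, since $\tx$ is a PNE of $\tGamma$, $\tx_i$ minimizes $\tfi(\cdot,\tx_{-i})$ over $\tXi$; combining $\tfi(\tx_i,\tx_{-i})=\sum_k\alpha^k f_i(z^k,\tx_{-i})$ with $f_i\ge\tfi\ge\tfi(\tx_i,\tx_{-i})$ forces $f_i(z^k,\tx_{-i})=\tfi(\tx_i,\tx_{-i})$ for every $z^k$ carrying positive weight. Taking the generator to consist of such cost-optimal points (which is without loss of generality), each selected coordinate $x^{*}_i$ satisfies $f_i(x^{*}_i,\tx_{-i})=\tfi(\tx_i,\tx_{-i})\le\tfi(x_i,\tx_{-i})\le f_i(x_i,\tx_{-i})$ for all $x_i\in\X_i$, the last step because $\tfi$ is the convex hull of $f_i$.

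To glue, fix $i$ and $x_i\in\X_i$. All four aggregates that appear — at $x^{*}$, in $f_i(x^{*}_i,\tx_{-i})$, in $f_i(x_i,\tx_{-i})$, and in $f_i(x_i,x^{*}_{-i})$ — differ pairwise by at most $(\sqrt q+1)M\Delta/n$, the extra $M\Delta$ coming from the single own-action term $\frac1n A_i(\cdot)$ that is exchanged when a coordinate is swapped between $\tx$ and $x^{*}$. Applying the $(H,\gamma)$-Hölder bound \eqref{eq:ass:theta} to pass from $f_i(x^{*}_i,x^{*}_{-i})$ to $f_i(x^{*}_i,\tx_{-i})$ and from $f_i(x_i,\tx_{-i})$ to $f_i(x_i,x^{*}_{-i})$ costs $H\big((\sqrt q+1)M\Delta/n\big)^\gamma$ each, and inserting the best-response chain of (B) in between yields $f_i(x^{*}_i,x^{*}_{-i})\le f_i(x_i,x^{*}_{-i})+2H\big((\sqrt q+1)M\Delta/n\big)^\gamma$, which is precisely the claimed $\epsilon$. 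I expect the main obstacle to be reconciling the two roles of the selection in \eqref{eq:argmin}: it is chosen to control the aggregate in (A), yet must still be a cost-optimal generator point for (B). The clean way out is the remark that all positively-weighted generator points share the common value $\tfi(\tx_i,\tx_{-i})$, so \emph{any} selection among them is automatically a best response; the only genuine work then lies in the Shapley--Folkman--Starr estimate with the sharp $\sqrt q$ constant and in the careful bookkeeping of the $O(1/n)$ own-action corrections that upgrade $\sqrt q$ to $\sqrt q+1$.
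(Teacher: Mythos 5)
Your proposal is correct and follows essentially the same route as the paper: the paper's (sketched) proof, fleshed out in the proof of Theorem \ref{thm:main} with $\epsilon=\eta=0$, likewise applies the Shapley--Folkman/Starr bound to the sets $A_i Z(i,\tx)$ to get $\|\sum_i A_i\tx_i-\sum_i A_i x^*_i\|\le \sqrt q\,M\Delta$, uses $f_i=\tfi$ on generator points (Lemma \ref{lm:analyseconvexification}) together with $\tfi\le f_i$ to show each $x^*_i$ is an exact best response to $\tx_{-i}$, and pays $H\big((\sqrt q+1)M\Delta/n\big)^\gamma$ twice via the H\"older assumption to swap $\tx_{-i}$ for $x^*_{-i}$. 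Your explicit restriction to positively-weighted generator points is a harmless (indeed slightly more careful) version of what the paper does implicitly in Lemma \ref{lm1}(4).
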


\emph{Sketch of the proof:}   By the definition of the PNE in $\tGamma$, $\tx_i$ is a best response to $\tx_{-i}$ in terms of $\tf_i$. By Lemma \ref{lm1}, all the points in $Z(i,\tx)$ are also best replies to $\tx_{-i}$ in terms of $f_i$.

 We then use the Shapley-Folkman Lemma (Lemma \ref{lm:SF}) to disaggregate $\frac{1}{n}\sum_i A_i \tx_i$ over the sets $Z(i,\tx)$ to obtain a feasible profile $x^*$.
Finally, we can show that $\frac{1}{n}\sum_i A_i x^*_i \approx \frac{1}{n}\sum_i A_i \tx_i$ and that $x^*_i$ is (almost) a best response to $\frac{1}{n}\sum_i A_i x^*_i$.
\medskip

From an algorithmic point of view, a PNE is not always easy or fast to compute for the convexified game $\tGamma$, even though its existence is guaranteed. 
Even when we have a convergent algorithm, the outputs of the algorithm at each iteration provide only approximations of the exact PNE which may constitute $\epsilon$-PNEs but rarely exact PNEs. 
Then, the question that naturally arises is whether the idea above is still valid if $\tx$ is only an $\epsilon$-PNE of $\tGamma$, i.e., $\tx_i$ is an $\epsilon$-best response to $\tx_{-i}$ in terms of $\tf_i$. 
%More explicitly, are points in $Z(i,\tx)$ still $\epsilon$-best replies to $\tx_{-i}$ in terms of $f_i$? 
%is there still a feasible action profile $x^*$ of $\Gamma$ such that $\frac{1}{n}\sum_i A_i x^*_i \approx \frac{1}{n}\sum_i A_i\tx_i$ and $x^*_i$ is (almost) a best response to $\frac{1}{n}\sum_i A_i x^*_i$, while $x^*_i$'s are in $W_i(\tx)$? 
The answer is yes if the $\epsilon$-PNE $\tx$ of the convexified game $\tGamma$ satisfies a more demanding condition, introduced by the following definition.

\begin{defn}[Stability condition]\label{def:approxeq}
In game $\tGamma$, %an $\epsilon$-Nash equilibrium % 
for a given $\eta\geq 0$, 
a point $\tx\in \tX$ is said to satisfy the \emph{$\eta$-stability condition with respect to} $(Z(i,\tx))_i$ 
if, for each player $i$, 
 $\tf_i(x_i,\tx_{-i})  \leq \tfi(\tx_i,\tx_{-i}) + \eta$ for all $x_i \in Z(i,\tx)$.

A point $\tx$ is said to satisfy the \emph{$\eta$-stability condition} if it satisfies the $\eta$-stability condition with respect to a certain generator profile $(Z(i,\tx))_i$.

A point $\tx$ is said to satisfy the \emph{full $\eta$-stability condition} if it satisfies the $\eta$-stability condition with respect to any generator profile $(Z(i,\tx))_i$.
\end{defn}

The stability condition of $\tx$ with respect to $(Z(i,\tx))_i$ means that the cost for each player $i$ is only slightly increased if the player's choice is unilaterally perturbed within the convex hull of the generator $Z(i,\tx)$.

According to Lemma \ref{lm1}, a PNE of $\tGamma$ satisfies the  full $0$-stability condition.
\medskip

A sufficient condition for the $\eta$-stability of $\tx$ is given by Lemma \ref{lem:approxeq}.% in Appendix B.
\begin{lem}\label{lem:approxeq}
Under Assumption \ref{ass:lsc}, for any action profile $\tx\in \tX$, for any player $i$,  if there is a generator $Z(i,\tx)$ and $h\in \partial_i \tfi(\txi, \tx_{-i})$ such that %for all $x_i \in \conv \, W^{\lambda_i}_i(\txi, \tx_{-i})$, 
\begin{equation}\label{eq:firstorder}
 \bigl\langle   h, x_i - \tx_i \bigr\rangle  \geq  -\eta \|x_i -\txi \|\, ,\quad \forall x_i \in \conv \, Z(i,\tx)\, ,
\end{equation}
then,
\begin{equation*}
  |\tf_i(x_i,\tx_{-i}) - \tfi(\tx_i,\tx_{-i}) | \leq \eta\|x_i -\txi \| \, , \quad \text{for all }\,  x_i \in Z(i,\tx)\ .%\convhull\, W_i(\tx)\, ;
\end{equation*}
In particular, $\tx$ satisfies the $\eta \Delta$-stability condition  with respect to $(Z(i,\tx))_i$.
\end{lem}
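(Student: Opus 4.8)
The plan is to fix a player $i$ and abbreviate $g(\cdot):=\tfi(\cdot,\tx_{-i})$, which by Lemma~\ref{lm1} is convex and l.s.c.\ on $\tXi$; hence the subgradient $h\in\partial_i\tfi(\txi,\tx_{-i})$ obeys $g(x_i)\ge g(\txi)+\langle h,x_i-\txi\rangle$ for all $x_i\in\tXi$. Writing $\txi=\sum_k\alpha^k z^k$ with $z^k\in Z(i,\tx)$ and $\alpha=\alpha(i,\tx)$, the defining equality \eqref{eq:costConv} of a generator reads $g(\txi)=\sum_k\alpha^k f_i(z^k,\tx_{-i})$. Since generator points carrying zero weight neither enter this representation of $\txi$ nor affect $g(\txi)$, I may assume $\alpha^k>0$ for every $k$.

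First I would record the easy, lower estimate, which is valid at any $x_i\in\conv Z(i,\tx)$: combining the subgradient inequality with the hypothesis \eqref{eq:firstorder} gives $g(x_i)-g(\txi)\ge\langle h,x_i-\txi\rangle\ge-\eta\|x_i-\txi\|$. This already controls $g(z^k)-g(\txi)$ from below for every generator point, and it uses nothing but convexity.

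The substance, and where I expect the main difficulty, is the matching upper bound, because \eqref{eq:firstorder} is only a one-sided inequality on $\langle h,\cdot\rangle$. The key is that the generator forces an equality: as $g\le f_i$ on $\X_i$ and $g$ is convex, the chain $g(\txi)\le\sum_k\alpha^k g(z^k)\le\sum_k\alpha^k f_i(z^k,\tx_{-i})=g(\txi)$ collapses, so Jensen's inequality holds with equality. Consequently $g$ is \emph{affine} on the simplex $\conv Z(i,\tx)$ spanned by the (positively weighted) generator points, and $\txi$ lies in its relative interior. With this rigidity I would convert the one-sided hypothesis into a two-sided one by reflection: for a generator point $z^k$, relative interiority makes the reflected point $\txi-t(z^k-\txi)$ belong to $\conv Z(i,\tx)$ for small $t>0$, and applying \eqref{eq:firstorder} there and dividing by $t$ yields $\langle h,z^k-\txi\rangle\le\eta\|z^k-\txi\|$. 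Moreover, affineness on the face together with relative interiority forces the subgradient inequality to hold as an equality along the face, i.e.\ $g(z^k)-g(\txi)=\langle h,z^k-\txi\rangle$. Combining the two sign estimates gives $|g(z^k)-g(\txi)|=|\langle h,z^k-\txi\rangle|\le\eta\|z^k-\txi\|$, which is the claimed inequality.

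Finally, to obtain the ``in particular'' statement I would note that $z^k\in\X_i\subseteq\tXi$ and $\txi\in\tXi$, so $\|z^k-\txi\|$ is at most the diameter of $\conv\X_i$, which equals $|\X_i|\le\Delta$; hence $\tfi(x_i,\tx_{-i})-\tfi(\txi,\tx_{-i})\le\eta\Delta$ for every $x_i\in Z(i,\tx)$, which is precisely the $\eta\Delta$-stability condition with respect to $(Z(i,\tx))_i$ in Definition~\ref{def:approxeq}. The delicate point throughout is that turning the directional bound \eqref{eq:firstorder} into the two-sided control required for stability rests essentially on the affine-on-the-face and relative-interior structure produced by the generator; without it only the harmless lower estimate would survive.
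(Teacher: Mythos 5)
Your proof is correct and follows essentially the same route as the paper's: the reflection argument at the relative-interior point $\txi$ of $\conv Z(i,\tx)$ to upgrade the one-sided bound \eqref{eq:firstorder} to $|\langle h, x_i-\txi\rangle|\leq \eta\|x_i-\txi\|$, combined with the identity $\tfi(z^k,\tx_{-i})-\tfi(\txi,\tx_{-i})=\langle h, z^k-\txi\rangle$ forced by the generator. The paper isolates that identity as a separate lemma (Lemma \ref{lm:analyseconvexification}), which you re-derive inline via the Jensen-equality/affine-on-the-face rigidity argument — the same computation in slightly different packaging.
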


%\subsection{$\epsilon$-PNE in the large non-convex aggregative game}

The following theorem describes how to construct an $\mathcal{O}(\frac{1}{n^\gamma})$-PNE of the original non-convex game $\Gamma$ when we know both an $\epsilon$-PNE of the convexified game $\tGamma$ satisfying the $\eta$-stability condition and the associated generator profile.

\begin{thm}[Construction of $\epsilon$-PNE of $\Gamma$]	\label{thm:main}
Under Assumption \ref{ass:lsc}, suppose that $\tx\in \tX$ is an $\epsilon$-PNE in $\tGamma$ that satisfies the $\eta$-stability condition with respect to a specific generator profile $(Z(i,\tx))_i$. Let $x^{*}\in \X$ be such that
%the one in Lemma \ref{lm:SF} (cf. eq. \eqref{eq:argmin}). 
\begin{equation}\label{eq:argmin}
x^{*}\in \underset{x_i\in Z(i,\tx),\, i\in N }{\operatorname{argmin}} \Big\|\sum_{i\in N} A_i \tx_i-\sum_{i\in N} A_i x_i\Big \|^2\, .
\end{equation}
Then, $x^*$ is an $\tilde \epsilon$-PNE of the non-convex game $\Gamma$, where $\tilde \epsilon = \epsilon+\eta + 2 H (\frac{(\sqrt{q}+1)M\Delta}{n})^\gamma$. % \textcolor{red}{ $\tilde\epsilon = \epsilon+\eta + 2 H (\frac{(\sqrt{q}+1)M\Delta}{n})^\gamma$}
\end{thm}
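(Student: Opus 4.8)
The plan is to reproduce the disaggregation argument used for Proposition~\ref{thm:existepsilon}, with a single modification: the \emph{exact} best-reply property of the generator points (which held there because $\tx$ was an exact PNE of $\tGamma$) must be weakened to an \emph{approximate} best-reply property in terms of $f_i$. The whole novelty of the statement is that the stability condition is exactly what makes this weakening quantitatively controllable. Once this approximate best-reply property is in hand, the Shapley--Folkman disaggregation and the H\"older bookkeeping are verbatim those of Proposition~\ref{thm:existepsilon}.

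\textbf{Step 1 (the new ingredient): an $(\epsilon+\eta)$-best reply in terms of $f_i$.} Since $\tx$ is an $\epsilon$-PNE of $\tGamma$ and satisfies the $\eta$-stability condition with respect to $(Z(i,\tx))_i$, for each player $i$ and each generator point $x^*_i\in Z(i,\tx)$ I chain
\begin{equation*}
\tfi(x^*_i,\tx_{-i})\ \le\ \tfi(\txi,\tx_{-i})+\eta\ \le\ \tfi(y_i,\tx_{-i})+\epsilon+\eta,\qquad\forall\,y_i\in\tXi,
\end{equation*}
the first inequality being the stability condition (Definition~\ref{def:approxeq}) and the second the $\epsilon$-equilibrium property. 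Specializing to $y_i=x_i\in\X_i\subset\tXi$ and invoking Lemma~\ref{lm1} twice — once for $\tfi(x_i,\tx_{-i})\le f_i(x_i,\tx_{-i})$ (the convexified cost lies below the original) and once for the equality $\tfi(x^*_i,\tx_{-i})=f_i(x^*_i,\tx_{-i})$ at a generator point — I obtain
\begin{equation*}
f_i(x^*_i,\tx_{-i})\ \le\ f_i(x_i,\tx_{-i})+\epsilon+\eta,\qquad\forall\,x_i\in\X_i. \tag{$\star$}
\end{equation*}
This is the step I expect to require the most care: it is the only place where the hypotheses peculiar to this theorem (the slack $\epsilon$ and the stability constant $\eta$) enter, and it hinges on correctly descending from the convexified costs $\tfi$ to the nonconvex costs $f_i$ at the generator points via Lemma~\ref{lm1}.

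\textbf{Step 2 (Shapley--Folkman).} Exactly as for Proposition~\ref{thm:existepsilon}, since $A_j\tx_j\in\conv\!\big(A_j Z(j,\tx)\big)$ for every $j$, the point $\sum_j A_j\tx_j$ lies in the convex hull of the Minkowski sum $\sum_j A_j Z(j,\tx)\subset\R^q$, so Lemma~\ref{lm:SF} controls the residual of the minimizer $x^*$ of \eqref{eq:argmin}. Writing $a^*:=\tfrac1n\sum_j A_j x^*_j$ and, for a deviation $x_i\in\X_i$, the three aggregates $\hat a_i:=\tfrac1n\big(A_i x^*_i+\sum_{j\ne i}A_j\tx_j\big)$, $\hat b_i:=\tfrac1n\big(A_i x_i+\sum_{j\ne i}A_j\tx_j\big)$ and $\hat c_i:=\tfrac1n\big(A_i x_i+\sum_{j\ne i}A_j x^*_j\big)$, a direct computation gives $\hat b_i-\hat c_i=-(a^*-\hat a_i)=\tfrac1n\sum_{j\ne i}A_j(\tx_j-x^*_j)$. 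Hence the two discrepancies that will appear below have equal norm, and after the single own-action correction $\tfrac1n\|A_i(x^*_i-\tx_i)\|\le\tfrac{M\Delta}{n}$ each is at most $\tfrac{(\sqrt q+1)M\Delta}{n}$, the same bound produced in Proposition~\ref{thm:existepsilon}.

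\textbf{Step 3 (H\"older bookkeeping and conclusion).} For each $i$ and each $x_i\in\X_i$ I chain the $(H,\gamma)$-H\"older estimate of Assumption~\ref{ass:lsc}(2) around $(\star)$, using $f_i(x^*_i,x^*_{-i})=\theta_i(x^*_i,a^*)$, $f_i(x^*_i,\tx_{-i})=\theta_i(x^*_i,\hat a_i)$, $f_i(x_i,\tx_{-i})=\theta_i(x_i,\hat b_i)$ and $f_i(x_i,x^*_{-i})=\theta_i(x_i,\hat c_i)$:
\begin{align*}
\theta_i(x^*_i,a^*)&\le\theta_i(x^*_i,\hat a_i)+H\|a^*-\hat a_i\|^\gamma\\
&\le\theta_i(x_i,\hat b_i)+\epsilon+\eta+H\|a^*-\hat a_i\|^\gamma\\
&\le\theta_i(x_i,\hat c_i)+\epsilon+\eta+H\|a^*-\hat a_i\|^\gamma+H\|\hat b_i-\hat c_i\|^\gamma,
\end{align*}
where the middle inequality is $(\star)$ and the outer two are H\"older continuity. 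Because $\|\hat b_i-\hat c_i\|=\|a^*-\hat a_i\|\le\tfrac{(\sqrt q+1)M\Delta}{n}$ by Step~2, the two H\"older contributions coincide and sum to $2H\big(\tfrac{(\sqrt q+1)M\Delta}{n}\big)^\gamma$, whence $f_i(x^*_i,x^*_{-i})\le f_i(x_i,x^*_{-i})+\epsilon+\eta+2H\big(\tfrac{(\sqrt q+1)M\Delta}{n}\big)^\gamma=f_i(x_i,x^*_{-i})+\tilde\epsilon$ for all $x_i\in\X_i$ and all $i$, which is precisely the $\tilde\epsilon$-PNE property. Thus the only genuinely new work relative to Proposition~\ref{thm:existepsilon} is Step~1; Steps~2 and~3 are its verbatim repetition, and the equal-norm cancellation in Step~2 is what keeps the constant at exactly $2H\big(\tfrac{(\sqrt q+1)M\Delta}{n}\big)^\gamma$.
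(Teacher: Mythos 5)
Your proof is correct and follows essentially the same route as the paper's: the chain stability $+$ $\epsilon$-equilibrium $+$ descent from $\tfi$ to $f_i$ at generator points, then Shapley--Folkman on the sets $A_iZ(i,\tx)$, then two applications of the H\"older bound with the same discrepancy vector $\tfrac1n\sum_{j\neq i}A_j(\tx_j-x^*_j)$. The only nitpick is a citation slip: the equality $\tfi(x^*_i,\tx_{-i})=f_i(x^*_i,\tx_{-i})$ at generator points is Lemma~\ref{lm:analyseconvexification}(1), not Lemma~\ref{lm1} (whose part (1) gives only the inequality $\tfi\le f_i$, which points the wrong way for that step).
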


\subsection{A distributed randomized ``Shapley-Folkman disaggregation''}\label{subsec:randomSF}
Once an exact PNE or an $\epsilon$-PNE $\tx$ satisfying the $\eta$-stability condition of the convexified game $\tGamma$ is obtained, as well as the associated generator profile $(Z(i,\tx))_i$, we would like to find an $\tilde{\epsilon}$-PNE of the non-convex game $\Gamma$ whose existence is shown by Proposition \ref{thm:existepsilon} and Theorem \ref{thm:main}. However, solving \eqref{eq:argmin} is generally hard (cf. Udell and Boyd \cite{UdellBoyd2016} for such a ``Shapley-Folkman disaggregation'' in a particular  optimization setting). 
In this section, we present a method for computing an $\check{\epsilon}$-mixed-strategy Nash equilibrium (MNE) in a distributed way, based on the known $\epsilon$-PNE of $\tGamma$, its associated generator profile, and its coefficients. The algorithm is called ``distributed'' because it computes the mixed-strategy $\mu_i$ of each player $i$ from the information of $\tx_i$, the generator $Z(i, \tx)$ and the corresponding coefficients only. 

%In the proof of Proposition \ref{thm:existepsilon} and Theorem \ref{thm:main}, the Shapley-Folkman lemma is used to show the existence $(e_i)_i$, which can be called a ``Shapley-Folkman disaggregation'' of $\tx$ or, more precisely, \tcbl{the aggregate term} $\sum_i A_i \tx_i$. Nevertheless, the Shapley-Folkman lemma is not constructive. There has been few practical algorithms proposed in the literature to find such a ``Shapley-Folkman disaggregation'' (see Udell and Boyd \cite{UdellBoyd2016} for a particular setting of optimization) \tcr{and the disaggregation is done in a centralized way for the $n$ sets \tcbl{$E_i(\tx)=A_i Z(i,\tx)$}. This is the reason why optimization problem \eqref{eq:argmin} is introduced as a constructive solution.} Note that $\bx$ and $x^*$ are not ``Shapley-Folkman disaggregations'' strictly speaking, but feasible action profiles for the original non-convex game while $\sum_i A_i \bx$ and $\sum_i A_i x^*_i$ are  close enough to $\sum_i A_i \tx$. For simplification, they are also called ``Shapley-Folkman disaggregation'' of $\tx$. 
%
%
%Although this paper does not aim to propose a generic algorithm for the resolution of discrete optimization program \eqref{eq:argmin}, 
%let us introduce a fast, easy and \tcbl{distributed computing} method which finds a feasible mixed-strategy profile that is an $\check \epsilon$-MNE of the original non-convex game, and such that the \tcr{additive} error $\check \epsilon$ is decreasing in $n$, though less fast than $x^*$ in Proposition \ref{thm:existepsilon} and Theorem \ref{thm:main}. 

\begin{prop}\label{prop:mixed}
Under Assumption \ref{ass:lsc}, suppose that $\tx\in \tX$ is an $\epsilon$-PNE of $\tGamma$ satisfying the $\eta$-stability condition with respect to $(Z(i,\tx))_i$, and each player $i$ plays a mixed strategy $\tilde{\mu}_i$ independently, i.e., a random action $X_i$ following the distribution $\tilde{\mu}_i$ over $\X_i$, defined by $\mathbb{P}(X_i = x_i^{l}) = \alpha_i^l$, where  $x_i^l \in Z(i,x)$ for $l=1,\ldots, d+1$, and $(\alpha^l_i)_{l=1}^{d+1}=\alpha(i,x)$ is the corresponding vector of coefficients. Then, for $\gamma\leq 1$, $\tilde\mu=(\tilde\mu_i)_i$ is an $\check \epsilon$-MNE of the non-convex game $\Gamma$, where $\check \epsilon = \epsilon+\eta + 2 H (\frac{(\sqrt{n}+1)M\Delta}{n})^\gamma$, in the sense that
\begin{equation*}
\mathbb{E}\big[ f_i(X_i, X_{-i})  \big] \leq \mathbb{E}\big[  f_i(x_i,X_{-i}) \big]  + \check \epsilon\, , \quad \forall \, x_i \in \X_i \ .
\end{equation*}
%\tcr{where $X_i$ follows $\tilde \mu_i$ independently.}
\end{prop}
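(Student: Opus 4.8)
The plan mirrors the proof of Theorem \ref{thm:main}, with the deterministic Shapley--Folkman disaggregation replaced by a second-moment estimate on the random aggregate. Write $S:=\frac1n\sum_{j\in N}A_jX_j$ and, for each player $i$, $D_i:=\frac1n\sum_{j\in N_{-i}}A_j(X_j-\tx_j)$. Since $X_j$ takes the value $x_j^l$ with probability $\alpha_j^l$ and $\tx_j=\sum_l\alpha_j^l x_j^l$ is the corresponding convex combination, we have $\mathbb{E}[X_j]=\tx_j$, so each $D_i$ is an average of independent, mean-zero vectors. First I would record --- exactly as in the sketch of Proposition \ref{thm:existepsilon} --- that every generator point is a good reply to $\tx_{-i}$ in terms of the \emph{nonconvex} cost: the $\eta$-stability condition gives $\tfi(x_i^l,\tx_{-i})\le\tfi(\txi,\tx_{-i})+\eta$ for $x_i^l\in Z(i,\tx)$; the $\epsilon$-PNE property gives $\tfi(\txi,\tx_{-i})\le\tfi(x_i,\tx_{-i})+\epsilon\le f_i(x_i,\tx_{-i})+\epsilon$ for every $x_i\in\X_i$; and Lemma \ref{lm1} lets one read $\tfi(x_i^l,\tx_{-i})$ off as $f_i(x_i^l,\tx_{-i})$ at the generator points. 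Combining,
\begin{equation}\label{eq:plan-star}
f_i(x_i^l,\tx_{-i})\ \le\ f_i(x_i,\tx_{-i})+\epsilon+\eta\qquad\text{for all }x_i\in\X_i,\ x_i^l\in Z(i,\tx).
\end{equation}
This is the only place where $\epsilon$ and $\eta$ enter the final bound.

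Next I would freeze the opponents at $\tx_{-i}$ and use the $(H,\gamma)$-H\"older continuity of $\theta_i$ in its aggregate argument (Assumption \ref{ass:lsc}(2)) to pass between the random and frozen profiles. When player $i$ plays $X_i$, the aggregates entering $f_i(X_i,X_{-i})$ and $f_i(X_i,\tx_{-i})$ differ precisely by $D_i$, so $|f_i(X_i,X_{-i})-f_i(X_i,\tx_{-i})|\le H\|D_i\|^\gamma$; taking expectations, using $\mathbb{E}[f_i(X_i,\tx_{-i})]=\sum_l\alpha_i^l f_i(x_i^l,\tx_{-i})$ and then \eqref{eq:plan-star} (a convex combination of its right-hand sides) gives, for every fixed $x_i\in\X_i$,
\begin{equation*}
\mathbb{E}[f_i(X_i,X_{-i})]\ \le\ f_i(x_i,\tx_{-i})+\epsilon+\eta+H\,\mathbb{E}\|D_i\|^\gamma .
\end{equation*}
Symmetrically, the aggregates entering $f_i(x_i,X_{-i})$ and $f_i(x_i,\tx_{-i})$ also differ by $D_i$, whence $f_i(x_i,\tx_{-i})\le\mathbb{E}[f_i(x_i,X_{-i})]+H\,\mathbb{E}\|D_i\|^\gamma$. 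Chaining the two inequalities eliminates the deterministic reference term and leaves
\begin{equation*}
\mathbb{E}[f_i(X_i,X_{-i})]\ \le\ \mathbb{E}[f_i(x_i,X_{-i})]+\epsilon+\eta+2H\,\mathbb{E}\|D_i\|^\gamma .
\end{equation*}

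It then remains to estimate $\mathbb{E}\|D_i\|^\gamma$. By independence and centring the cross terms vanish, so $\mathbb{E}\|D_i\|^2=\frac1{n^2}\sum_{j\in N_{-i}}\mathbb{E}\|A_j(X_j-\tx_j)\|^2\le\frac{n-1}{n^2}M^2\Delta^2$, using $\|A_j\|_2\le M$ and $\|X_j-\tx_j\|\le\Delta$ (both points lie in $\tX_j$, whose diameter equals that of $\X_j$). Because $\gamma\le1$ the map $u\mapsto u^\gamma$ is concave, so Jensen's inequality gives $\mathbb{E}\|D_i\|^\gamma\le(\mathbb{E}\|D_i\|)^\gamma\le(\mathbb{E}\|D_i\|^2)^{\gamma/2}\le\bigl(\tfrac{\sqrt{n-1}\,M\Delta}{n}\bigr)^\gamma\le\bigl(\tfrac{(\sqrt n+1)M\Delta}{n}\bigr)^\gamma$. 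Substituting into the previous display yields the claim with $\check\epsilon=\epsilon+\eta+2H\bigl(\tfrac{(\sqrt n+1)M\Delta}{n}\bigr)^\gamma$.

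I expect the crux to be conceptual rather than computational: it is the substitution of the deterministic Shapley--Folkman disaggregation of Theorem \ref{thm:main} --- which moves the aggregate by only $O(\sqrt q/n)$, the factor $\sqrt q$ coming from the ambient dimension --- by the probabilistic estimate above, which controls the aggregate only in mean square and hence degrades the guarantee from $O(n^{-\gamma})$ to $O(n^{-\gamma/2})$; this is exactly why $\sqrt q$ is replaced by $\sqrt n$, and the hypothesis $\gamma\le1$ is precisely what makes $u\mapsto u^\gamma$ concave for the Jensen step. The one bookkeeping subtlety to watch is that each player's own action sits inside the aggregate argument of $\theta_i$: since that own action is held identical on the two sides of each H\"older comparison, only the opponents' fluctuation $D_i$ --- a mean of $n-1$ independent terms --- enters the error, and no extra own-action term is incurred.
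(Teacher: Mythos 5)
Your proof is correct and follows essentially the same route as the paper's: freeze the opponents at $\tx_{-i}$, combine the $\eta$-stability condition, the $\epsilon$-PNE property and the equality $f_i=\tfi$ at generator points (which is Lemma \ref{lm:analyseconvexification}(1) rather than Lemma \ref{lm1}, a minor citation slip), control the random aggregate deviation by an independence/variance bound as in Lemma \ref{lm:decomposeSF}, and use Jensen for the concave map $u\mapsto u^\gamma$. Your only (harmless) deviation is that you observe the own-action terms cancel exactly, giving the slightly sharper $\tfrac{\sqrt{n-1}}{n}M\Delta$ where the paper bounds the own term separately by triangle inequality to get $\tfrac{\sqrt{n}+1}{n}M\Delta$; both yield the stated $\check\epsilon$.
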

%\tcr{The proof is in Appendix B.}

\begin{rem}
Note that this is not an ``adaptive'' algorithm that allows the players to attain an $\epsilon$-PNE/MNE in the non-convex game through a decentralized adaptation/learning process; instead, it is a distributed, randomized disaggregation algorithm to recover an $\check{\epsilon}$-MNE of $\Gamma$ from a known $\epsilon$-PNE of $\tGamma$ satisfying the $\eta$-stability condition and its generators. 

Besides, when $\tx$ is an exact PNE of $\tGamma$, all the generators of $\tx$ can be used in this algorithm. On the contrary, when $\tx$ is only an $\epsilon$-PNE of $\tGamma$, we need a specific profile of generators $(Z(i,x))_i$, with respect to which $\tx$ satisfies the $\eta$-stability condition. How to find such a generator profile is not evident. However, in the next section, we provide an algorithm to compute, for a specific class of games, an $\epsilon$-PNE of $\tGamma$ satisfying the full $\eta$-stability condition (i.e., with respect to any profile of generators of $\tx$). In that case, any profile of generators can be used in the algorithm of Proposition \ref{prop:mixed}.

%Finally, note that the approximation precision of  $\check \epsilon$ is less good than 
Finally, note that the estimated error $\check \epsilon$ for the distributed randomized approximate MNE in Proposition \ref{prop:mixed} is larger than the estimated error $\tilde \epsilon$ for the approximate PNE 
$x^*$ in 
%Proposition \ref{thm:existepsilon} and 
Theorem \ref{thm:main}.  
\end{rem}

\section{Computing $\epsilon$-equilibria for large non-convex congestion games}\label{sec:congestion}

\subsection{Non-convex  congestion games}
%-------------------------------------------
Congestion games are an extensively studied class of sum-aggregative games. In this section, we present an iterative algorithm to compute an $\omega(K,n)\Delta$-PNE of the convexification of a specific congestion game, in which $\omega(K,n)$ tends to zero when both the number of players $n$ and the number of iterations $K$ tend to $+\infty$, while $\frac{n}{K}$ tends to zero. 
Note that any algorithm returning an approximate PNE of the convexified game will not necessarily ensure that it verifies the stability condition. 
%However, we show that our $\omega(K,n)\Delta$-PNE of the convexified game satisfies the full $\omega(K,n)\Delta$-stability condition (cf. Proposition \ref{lm:Nash-disagrregative}).
%This is the particular interest of the proposed algorithm for which we show that the iterates provide  an 
The proposed algorithm is of particular interest because we can show that the iterates provide an $\omega(K,n)\Delta$-PNE of the convexified game that satisfies the full $\omega(K,n)\Delta$-stability condition (cf. Proposition \ref{lm:Nash-disagrregative}). 
Then, taking $K\sim \mathcal{O}(n^3)$, Theorem \ref{thm:congestionPNE} shows that one can recover an $\mathcal{O}(\frac{1}{n})$-PNE of the original non-convex congestion game from this $\omega(K,n)\Delta$-PNE of the convexified game. 
\medskip

Consider a congestion game in which each player $i\in N$ has an action set $\X_i\subset \R^d$ and a cost function of the following form:
\begin{equation}
\label{eq:game}
\begin{split}
f_i(x_i,x_{-i}) =& \bigg\langle g \Bigl( \frac{1}{n}  \sum_{j\in N} a_j  x_j\Bigr), x_i \bigg\rangle + h_i \Big( \frac{1}{n} \sum_{j\in N} a_j  x_j \Big) + r_i(x_i)\\
=&\sum_{t=1}^{d} g_{t} \Bigl( \frac{1}{n} \sum_{j\in N} a_j x_{j,t} \Bigr) x_{i,t}  + h_i \Big( \frac{1}{n} \sum_{j\in N} a_j  x_j \Big) +r_i(x_i)\, . 
\end{split}
\end{equation}

Suppose that the following assumptions hold on $\X_i$, $(a_j)_{j\in N}\in\R^n$, $g_t$, $h_i$ and $r_i$. 

\begin{ass} \label{ass:example}~~
\begin{itemize}
\item There exist constants $m>0$ and $M>0$ such that $m\leq a_i\leq M $ for all $i\in N$.
\item For $t=1,\ldots, d$, the function $g_t: \R\rightarrow \R$ is $L_{g_t}$-Lipschitz continuous and nondecreasing on a neighborhood of $[D_1, D_2]$, where the constants $D_1$ and $D_2$ are such that $D_1\leq \min_{t=1, \ldots, d; x\in \tX} \frac{1}{n}\sum_{j\in N} a_j x_{j,t}\leq \max_{t=1, \ldots, d ; x\in \tX} \frac{1}{n}\sum_{j\in N} a_j x_{j,t}\leq D_2$. 
\item For each $i\in N$, the function $h_i: \R^d\rightarrow \R$ is $L_{h_i}$-Lipschitz continuous on $[D_1, D_2]^d$. %Furthermore, there exists a constant $L_h>0$, such that $L_{h_i}\leq L_{h}$ for all $i\in N$.
\item Players' local cost functions $r_i: \R^d \rightarrow \R$ are uniformly bounded, i.e., there exists a constant $B_r>0$ such that, for all $i\in N$ and all $x_i\in \X_i$, $|r_i(x_i)|\leq B_r$.
\end{itemize}
\end{ass}

\begin{nota}
Let the constant $\Delta := \max\{\max_{i\in N}\max_{x_i\in \tXi}\|x_i\|, \max_{i\in N} |\tX_i|\}$.	 Let  $L_g:=\linebreak[4]\max_{1\leq t \leq d}L_{g_t}$, $L_{h}:= \max_{i\in N} L_{h_i}$, $B_g := \max_{1\leq t \leq d, D_1\leq s \leq D_2}|g_t (s)|$. %Let constants $m>0$ and $M>0$ be such that $m\leq a_i\leq M $ for all $i\in N$.
\end{nota}

\bigskip

The convexification of $\Gamma$ is rather complicated to compute in the general case. Let us first introduce an auxiliary game that is very close to $\Gamma$  whose convexification is easier to obtain.

Fix arbitrarily $x^+_i\in \X_i$ for each player $i\in N$. The auxiliary game $\bGamma$ is defined as follows: the player set and each player's action set are the same as in $\Gamma$, but player $i$'s  cost function is, for all $ x_i \in \X_i$ and all $x_{-i}\in \tX_{-i}$,
\begin{equation*}
\bfi(x_i, x_{-i}):= \Big\langle g \Bigl( \frac{1}{n} \sum_{j\neq i}a_j  x_j + \frac{1}{n} a_i x^+_i\Bigr), x_i \Big\rangle + r_i(x_i)\, .
\end{equation*}

The original game $\Gamma$ can be approximated by the auxiliary game $\bGamma$ because their approximate equilibria are very close to each other, \tcbl{as the following lemma shows (see its proof  in Appendix B).
%Lemma \ref{lm: approx game} in Appendix B shows.
\begin{lem}\label{lm: approx game}
Under Assumption \ref{ass:example},  for the auxiliary game $\bGamma$,
\begin{enumerate}[(1)]
\item Assumption \ref{ass:lsc} is verified with $H=L_g \Delta$ and $\gamma=1$;
\item an $\epsilon$-PNE of $\bGamma$ is an $(\epsilon +  \frac{L_h M \Delta}{n}+
 \frac{2L_g M\Delta^2}{n})$-PNE of $\Gamma$.
\end{enumerate}
\end{lem}
}

For any fixed $x_{-i}\in \tX_{-i}$, $\bfi(\cdot, x_{-i})$ is composed of a linear function of $x_i$ and a local function of $x_i$. By abuse of notation, let us still use $\bfi$ to denote its convexification on $\tXi$. More explicitly,
\begin{equation}\label{eq:gameconv}
\bfi(x_i, x_{-i}):= \Big\langle g \Bigl(  \frac{1}{n}\sum_{j\neq i} a_j  x_j + \frac{1}{n} a_i x^+_i\Bigr), x_i \Big\rangle + \tri(x_i) \, , %= \sum_{t=1}^{d} g_{t} \Bigl( \sum_{j\neq i} \omega_j x_{j,t} + \omega_i x^+_{i,t} \Bigr) x_{i,t} + \tri(x_i)
\end{equation}
where $\tri$ is the convexification of $r_i$ defined on $\tXi$ in the same way as \eqref{eq:costConv}.

By abuse of notation, let $\tGamma$ denote the convexification of $\bGamma$ on $\tX$.

\subsection{A gradient-proximal algorithm}
This subsection presents a gradient-proximal algorithm based on the block coordination proximal algorithm introduced by Xu and Yin \cite{XuandYin2013} that can be used to construct an $\mathcal{O}(\frac{1}{n})$-PNE of $\tGamma$ that satisfies the full $\mathcal{O}(\frac{1}{n})$-stability condition.
\bigskip

\begin{algorithm}[H]\label{algo:BCP}
\SetAlgoLined
\textbf{Initialization:} choose initial point $x^0=(x^0_1, x^0_2,\ldots,x^0_{n})\in \tX$\\
\For{$k=1,2,\cdots$}{
\For{$i=1,2,\ldots,n$}{
\begin{equation}\label{alg:proximal1}
{x}_{i}^{k}=\underset{x_i\in \tX_{i} }{\operatorname{argmin}}\Big\langle  g \Bigl(  \frac{1}{n} \sum_{j < i} a_j  x^{k}_j + \frac{1}{n}\sum_{j \geq i} a_j  x^{k-1}_j \Bigr), x_i-x_{i}^{k-1}\Big\rangle+\frac{a_iL_g}{2n}\big\|x_i-x_{i}^{k-1}\big\|^{2}+\tri(x_i)
\end{equation}
}
\If{stopping criterion is satisfied}
{return $(x^{k}_1, x^{k}_2,\ldots,x^{k}_{n})$. Break.}
}
\caption{Gradient-proximal algorithm for $\tGamma$}
\end{algorithm}

\begin{rem} 
This is a decentralized-coordinated type of algorithm. The coordinator needs to know the current choices of the players and $(a_i)_i$ to compute $g \bigl(  \frac{1}{n} \sum_{j < i} a_j  x^{k}_j + \frac{1}{n}\sum_{j \geq i} a_j  x^{k-1}_j \bigr)$.
 The value of the vector $g \bigl(  \frac{1}{n} \sum_{j < i} a_j  x^{k}_j + \frac{1}{n}\sum_{j \geq i} a_j  x^{k-1}_j \bigr)$ is sent to player $i$ by the coordinator in iteration $k$, when it is that player's turn to compute. No detailed information concerning the other players' choices is revealed. Receiving this value, player $i$ uses her local information, i.e., $a_i$, $\tri$ and $\tX_{i}$, to update her choice according to \eqref{alg:proximal1} and then sends it to the coordinator. \end{rem}

\begin{prop}\label{lm:Nash-disagrregative}
Under Assumption \ref{ass:example}, 
for $K\in \NN^*$, there is $k^*\leq K$ such that $x^{k^*}$ is an $\omega(K,n)\Delta$-PNE of game $\tGamma$ that satisfies the full $\omega(K,n)\Delta$-stability condition, where 
\begin{equation}\label{eq:estimation}
\omega(K, n)= \frac{\sqrt{2CL_g}M}{m}\sqrt{\frac{n}{K}} + \frac{ 2  L_g M \Delta}{n}\, ,
\end{equation}
where $C=(d\Delta L_g + 2 B_r) M$.

In particular, if the constant $K\geq \frac{2C}{m^2 L_g} n^{1+2\delta}+1$ for some constant $\delta>0$, then there exists some $k^{*}\leq K$ such that $x^{k^{*}}$ is an $L_g M \Delta(n^{-\delta}+2 \Delta n^{-1})$-PNE of game $\tGamma$ satisfying the full $L_g M \Delta(n^{-\delta}+2 \Delta n^{-1})$-stability condition.
\end{prop}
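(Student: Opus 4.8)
The plan is to recognize Algorithm~\ref{algo:BCP} as a cyclic block-coordinate proximal-gradient method (in the sense of Xu and Yin \cite{XuandYin2013}) applied to the separable objective
$$
\Psi(x) := \Phi(x) + \sum_{i\in N} a_i \tri(x_i),\qquad \Phi(x):=\sum_{t=1}^d n\, G_t\Bigl(\tfrac1n\sum_{j\in N} a_j x_{j,t}\Bigr),\quad G_t' = g_t,
$$
whose smooth part $\Phi$ is the aggregative term and whose nonsmooth convex part is $\sum_i a_i\tri$. Multiplying the $i$-th subproblem \eqref{alg:proximal1} by $a_i>0$ (which does not change the argmin) turns it into the standard proximal-gradient update for block $i$, with block gradient $\nabla_{x_i}\Phi = a_i\, g(\cdot)$ evaluated at the current Gauss--Seidel iterate and step-size inverse equal to the block-Lipschitz constant $\hat L_i = a_i^2 L_g/n$ (changing $x_i$ moves the aggregate by $a_i/n$, and each $g_t$ is $L_g$-Lipschitz). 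First I would record the resulting sufficient-decrease inequality $\Psi(x^{k-1}) - \Psi(x^k)\ge \tfrac{m^2 L_g}{2n}\sum_{i}\|x_i^k - x_i^{k-1}\|^2$ and bound the range $\Psi(x^0)-\min_{\tX}\Psi\le nC$ with $C = (d\Delta L_g + 2B_r)M$ (the aggregative part of $\Phi$ contributing $d\Delta L_g M$ per player under Assumption~\ref{ass:example}, the local part $2B_r M$). Telescoping over $k\le K$ and a pigeonhole argument then yield an index $k^*\le K$ with $\sum_{i}\|x_i^{k^*}-x_i^{k^*-1}\|^2\le \tfrac{2n^2C}{m^2 L_g K}$.

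The second step converts this ``small successive change'' into the first-order condition \eqref{eq:firstorder} required by Lemma~\ref{lem:approxeq}. The variational inequality characterizing the prox step \eqref{alg:proximal1} supplies, for each $i$ and \emph{every} $x_i\in\tXi$, a subgradient $\xi_i\in\partial\tri(x_i^{k^*})$ with $\langle g(s_i^{k^*}) + \tfrac{a_i L_g}{n}(x_i^{k^*}-x_i^{k^*-1}) + \xi_i,\, x_i - x_i^{k^*}\rangle\ge 0$, where $s_i^{k^*}$ is the Gauss--Seidel aggregate appearing in the algorithm. Taking $h := g(\bar s_i) + \xi_i \in \partial_i\tfi(x_i^{k^*},x^{k^*}_{-i})$, with $\bar s_i:=\tfrac1n\sum_{j\ne i}a_j x_j^{k^*}+\tfrac1n a_i x_i^+$ the aggregate entering the cost $\tfi$ of $\tGamma$, I would lower-bound $\langle h, x_i - x_i^{k^*}\rangle$ by $-\eta\|x_i - x_i^{k^*}\|$ with $\eta$ split into (A) the prox residual $\tfrac{a_i L_g}{n}\|x_i^{k^*}-x_i^{k^*-1}\|\le \tfrac{L_g M\Delta}{n}$, and (B) the aggregate mismatch $L_g\|\bar s_i - s_i^{k^*}\|$. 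Writing $\bar s_i - s_i^{k^*} = \tfrac1n\sum_{j>i}a_j(x_j^{k^*}-x_j^{k^*-1}) + \tfrac1n a_i(x_i^+ - x_i^{k^*-1})$, the second summand is $\le \tfrac{L_g M\Delta}{n}$, while the first, via Cauchy--Schwarz $\sum_j\|\cdot\|\le\sqrt n\,(\sum_j\|\cdot\|^2)^{1/2}$ together with the pigeonhole bound, equals $\tfrac{\sqrt{2CL_g}M}{m}\sqrt{n/K}$. Summing, $\eta=\omega(K,n)$ as in \eqref{eq:estimation}. The point I want to stress is that it is this Gauss--Seidel aggregate mismatch, and not the prox residual, that drives the dominant $\sqrt{n/K}$ term.

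Because the variational inequality holds for \emph{all} $x_i\in\tXi$, it holds a fortiori on $\conv Z(i,x^{k^*})$ for \emph{every} generator profile; hence Lemma~\ref{lem:approxeq} certifies simultaneously that $x^{k^*}$ is an $\omega(K,n)\Delta$-PNE of $\tGamma$ and that it satisfies the \emph{full} $\omega(K,n)\Delta$-stability condition, which is exactly what the subsequent disaggregation requires. For the ``in particular'' clause I would simply substitute $K = \frac{2C}{m^2 L_g}n^{1+2\delta}+1$ into \eqref{eq:estimation}: then $\sqrt{n/(K-1)} = \frac{m\sqrt{L_g}}{\sqrt{2C}}\,n^{-\delta}$, so the first term collapses to $L_g M n^{-\delta}$, and multiplying $\omega$ by $\Delta$ gives the stated $L_g M\Delta(n^{-\delta}+2\Delta n^{-1})$.

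The main obstacle is the descent analysis of the first step: one must verify the cyclic block-coordinate sufficient-decrease inequality for the aggregatively coupled $\Psi$ with the correct $1/n$ scaling of the block-Lipschitz constants, so that the range bound $nC$ and the per-block proximal weight $a_iL_g/(2n)$ combine into precisely the rate $\omega(K,n)$; this is where the Xu--Yin machinery must be adapted to the present aggregative structure, and where the interplay between the budget and the Gauss--Seidel coupling has to be tracked carefully rather than through the naive prox-residual bound.
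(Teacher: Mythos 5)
Your proposal is correct and follows essentially the same route as the paper: the same Xu--Yin block-coordinate sufficient-decrease inequality applied to the potential $G_0(x)+\sum_j \frac{a_j}{n}\tr_j(x_j)$ (yours is just the $n$-rescaled version $\Psi=nG$), the same range bound $C$ and pigeonhole step giving $\|x^{k^*}-x^{k^*-1}\|\le \frac{\sqrt{2C}\,n}{m\sqrt{L_g K}}$, and the same conversion of the prox optimality condition into the first-order bound of Lemma \ref{lem:approxeq} with the error split into the prox residual and the Gauss--Seidel aggregate mismatch (your direct decomposition of $\bar s_i - s_i^{k^*}$ versus the paper's detour through the full current aggregate yields the identical $\omega(K,n)$). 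The only cosmetic difference is that the paper derives the $\omega(K,n)\Delta$-PNE property explicitly from the subgradient inequality and convexity of $\tr_i$ rather than folding it into Lemma \ref{lem:approxeq}, but your variational inequality over all of $\tXi$ delivers exactly that.
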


\begin{thm}\label{thm:congestionPNE}
Under Assumption \ref{ass:example}, for a constant $\delta>0$ and integer $K\geq \frac{2C}{m^2 L_g} n^{1+2\delta}+1$, let $x^{*}\in \X$ be the pure-strategy profile generated by \eqref{eq:argmin}, where $\tx$ is replaced by $x^{k^{*}}$ in Proposition \ref{lm:Nash-disagrregative}. Then, $x^{*}$ is a $\big(2L_g M \Delta \big( n^{-\delta}+ \frac{ (\sqrt{q}+4)\Delta}{n} \big)+\frac{L_h M\Delta}{n} \big)$-PNE of the non-convex game $\Gamma$. 
\end{thm}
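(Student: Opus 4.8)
The plan is to obtain $x^*$ as the Shapley--Folkman disaggregation of the algorithmic output and to chain three already-available estimates: the convergence-with-stability guarantee of Proposition \ref{lm:Nash-disagrregative}, the disaggregation bound of Theorem \ref{thm:main}, and the proximity of $\Gamma$ and $\bGamma$ from Lemma \ref{lm: approx game}. First I would apply Proposition \ref{lm:Nash-disagrregative}: since $K\geq \frac{2C}{m^2 L_g}n^{1+2\delta}+1$, there is $k^*\leq K$ such that $\tx:=x^{k^*}$ is an $\eta$-PNE of $\tGamma$ satisfying the \emph{full} $\eta$-stability condition, with $\eta=L_g M\Delta(n^{-\delta}+2\Delta n^{-1})$. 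Because the stability is full, it holds in particular for the generator profile $(Z(i,\tx))_i$ underlying \eqref{eq:argmin}, so the hypotheses of Theorem \ref{thm:main} are met with $\epsilon=\eta$.

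Second, I would apply Theorem \ref{thm:main} to the pair $(\bGamma,\tGamma)$ rather than to $\Gamma$ directly. The step to justify is that $\bGamma$ fits the framework of Section \ref{sec:exist}: taking $A_j=a_j I$ (hence aggregate dimension $q=d$), one writes $\bfi(x_i,x_{-i})=\theta_i\big(x_i,\tfrac1n\sum_j a_j x_j\big)$ with $\theta_i(x_i,y)=\langle g(y-\tfrac{a_i}{n}x_i+\tfrac{a_i}{n}x^+_i),x_i\rangle+r_i(x_i)$. The decisive feature of the reference point $x^+_i$ is that when slot $i$ of the aggregate is evaluated at a generator vertex $z^k$, the terms $\pm\tfrac{a_i}{n}z^k$ cancel, so the argument of $g$ no longer depends on the convexification variable; consequently the convex hull \eqref{eq:costConv} of $\bfi(\cdot,x_{-i})$ equals $\langle g(\cdot),x_i\rangle+\tri(x_i)$, which is exactly the cost of $\tGamma$. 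Assumption \ref{ass:lsc} holds with $\gamma=1$ and $H=L_g\Delta$, since $|\langle g(y')-g(y),x_i\rangle|\leq L_g\|y'-y\|\,\|x_i\|\leq L_g\Delta\|y'-y\|$. Theorem \ref{thm:main} (with $\epsilon=\eta$) then gives that $x^*$ is a $\tilde\epsilon$-PNE of $\bGamma$ with
\[
\tilde\epsilon=2\eta+\frac{2L_g\Delta(\sqrt q+1)M\Delta}{n}=2L_gM\Delta\,n^{-\delta}+\frac{\big(6+2\sqrt q\big)L_gM\Delta^2}{n}.
\]

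Third, I would transfer this from $\bGamma$ to $\Gamma$ via Lemma \ref{lm: approx game}. For every deviation $x_i\in\X_i$ the discrepancy $f_i-\bfi$ comes from two sources: the shift of the argument of $g$ from the genuine own action to $x^+_i$, controlled against $x_i$ by $L_g\cdot\tfrac{a_i}{n}\|x_i-x^+_i\|\cdot\|x_i\|\leq \tfrac{L_gM\Delta^2}{n}$ on each of the two compared actions, and the congestion term $h_i$, whose values at the aggregates $\tfrac1n\sum_j a_j x^*_j$ and $\tfrac1n\sum_{j\neq i}a_j x^*_j+\tfrac{a_i}{n}x_i$ differ by at most $L_h\tfrac{a_i}{n}\|x^*_i-x_i\|\leq\tfrac{L_hM\Delta}{n}$. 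Summing over the two directions contributes an extra $\tfrac{2L_gM\Delta^2}{n}+\tfrac{L_hM\Delta}{n}$, so that collecting all terms and using $\sqrt q\leq q$,
\[
\tilde\epsilon+\frac{2L_gM\Delta^2}{n}+\frac{L_hM\Delta}{n}=2L_gM\Delta\,n^{-\delta}+\frac{(8+2\sqrt q)L_gM\Delta^2}{n}+\frac{L_hM\Delta}{n}\leq 2L_gM\Delta\Big(n^{-\delta}+\frac{(q+4)\Delta}{n}\Big)+\frac{L_hM\Delta}{n},
\]
which is the asserted bound (and $x^*\in\X$ since \eqref{eq:argmin} selects $x^*_i\in Z(i,\tx)\subset\X_i$).

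The hardest part is not any single estimate but the structural observation underlying the second step: that Theorem \ref{thm:main} must be applied to the auxiliary game $\bGamma$, and that the convexification of $\bGamma$ coincides with $\tGamma$ precisely because the reference point $x^+_i$ decouples the argument of $g$ from the player's own action. The accompanying delicate bookkeeping is the treatment of $h_i$, which is absent from $\bGamma$ and must be reinserted with an $\mathcal{O}(1/n)$ error when returning to $\Gamma$; getting the Lipschitz and aggregate factors right there, and confirming $\gamma=1$, $H=L_g\Delta$, is where the argument could most easily go astray.
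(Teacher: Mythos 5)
Your proposal is correct and follows essentially the same route as the paper's (very terse) proof: Proposition \ref{lm:Nash-disagrregative} for the full-stability $\eta$-PNE of $\tGamma$, Theorem \ref{thm:main} applied to the auxiliary pair $(\bGamma,\tGamma)$ with $H=L_g\Delta$, $\gamma=1$ via Lemma \ref{lm: approx game}(1), and Lemma \ref{lm: approx game}(2) to return to $\Gamma$, with the final $\sqrt q\leq q$ absorption matching the stated constant. Your write-up in fact supplies more of the bookkeeping than the paper does, and all of it checks out.
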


For the case in which a ``Shapley-Folkman'' disaggregation of $x^{k^*}$ is not easy to obtain, one can use the distributed randomized disaggregation method introduced in Section \ref{subsec:randomSF} to immediately obtain an $\check{\epsilon}$-MNE, where $\check{\epsilon}$ is given by the following corollary. However, the quality of approximation is worse than that of a ``Shapley-Folkman'' disaggregation.

\begin{prop}\label{cor:congestionMNE}
Under Assumption \ref{ass:example}, for a constant $\delta>0$ and integer $K\geq \frac{2C}{m^2 L_g} n^{1+2\delta}+1$, let $\tilde\mu=(\tilde\mu_i)_i$ be a profile of independent mixed strategies defined as in Lemma \ref{lm:decomposeSF}, where $\tx$ is replaced by $x^{k^{*}}$ in Proposition \ref{lm:Nash-disagrregative}. Then, $\tilde{\mu}$ is a $\big(2L_g M \Delta \big( n^{-\delta}+ \frac{ (\sqrt{n}+4)\Delta}{n} \big)+\frac{L_h M\Delta}{n} \big)$-MNE of the non-convex game $\Gamma$. 
\end{prop}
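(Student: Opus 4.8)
The plan is to run the same three-step chain as for Theorem \ref{thm:congestionPNE}, only replacing the pure-strategy Shapley--Folkman disaggregation of \eqref{eq:argmin} by the distributed randomized disaggregation of Section \ref{subsec:randomSF}, and then transferring the approximate equilibrium from the auxiliary game $\bGamma$ back to $\Gamma$. The three ingredients I would chain are: (i) Proposition \ref{lm:Nash-disagrregative}, which produces an iterate $x^{k^*}$ that is an $\eta_0$-PNE of $\tGamma$ satisfying the \emph{full} $\eta_0$-stability condition, with $\eta_0=L_g M\Delta(n^{-\delta}+2\Delta n^{-1})$; (ii) Proposition \ref{prop:mixed}, which converts an $\epsilon$-PNE satisfying the $\eta$-stability condition into a $\check\epsilon$-MNE; and (iii) a Lipschitz comparison of $\bGamma$ with $\Gamma$, in the spirit of Lemma \ref{lm: approx game}.

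First I would verify that $\bGamma$ fits the sum-aggregative template of Section \ref{sec:exist}. Rewriting $\frac1n\sum_{j\neq i}a_j x_j+\frac1n a_i x^+_i=\frac1n\sum_j a_j x_j-\frac1n a_i(x_i-x^+_i)$, the cost $\bfi$ takes the form $\bar\theta_i(x_i,\frac1n\sum_j A_j x_j)$ with $A_j=a_j I$ (so $q=d$ and $\|A_j\|_2=a_j\le M$), and since $g$ is $L_g$-Lipschitz and $\|x_i\|\le\Delta$, the map $y\mapsto\bar\theta_i(x_i,y)$ is $(L_g\Delta,1)$-H\"older; moreover convexifying this in $x_i$ with $x_{-i}$ fixed recovers \eqref{eq:gameconv}. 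Because $x^{k^*}$ satisfies the \emph{full} $\eta_0$-stability condition, any generator profile is admissible in Proposition \ref{prop:mixed}, in particular the one underlying Lemma \ref{lm:decomposeSF}. Applying Proposition \ref{prop:mixed} with $\epsilon=\eta=\eta_0$, $H=L_g\Delta$ and $\gamma=1$ then yields that $\tilde\mu$ is a $\check\epsilon_0$-MNE of $\bGamma$, where $\check\epsilon_0=2\eta_0+2L_g\Delta\frac{(\sqrt n+1)M\Delta}{n}$.

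The remaining work is the transfer from $\bGamma$ to $\Gamma$. I would split $f_i-\bfi$ into a linear piece $\langle g(\frac1n\sum_j a_j x_j)-g(\frac1n\sum_{j\neq i}a_j x_j+\frac1n a_i x^+_i),x_i\rangle$ and the congestion piece $h_i(\frac1n\sum_j a_j x_j)$. The two arguments of $g$ differ by $\frac1n a_i(x_i-x^+_i)$, of norm at most $\frac{M\Delta}{n}$, so the linear piece is bounded in absolute value by $\frac{L_g M\Delta^2}{n}$; in the MNE inequality its two evaluations (at the random action $X_i$ and at a deviation $x_i$) contribute at most $\frac{2L_g M\Delta^2}{n}$. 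For the congestion piece I would condition on $X_{-i}$: replacing $X_i$ by $x_i$ shifts the argument of $h_i$ by $\frac1n a_i(X_i-x_i)$, whence the $L_h$-Lipschitz bound gives a contribution of at most $\frac{L_h M\Delta}{n}$. Adding $\frac{2L_g M\Delta^2}{n}+\frac{L_h M\Delta}{n}$ to $\check\epsilon_0$ and substituting $\eta_0$, the coefficients $4$, $2(\sqrt n+1)$ and $2$ multiplying $\frac{L_g M\Delta^2}{n}$ combine into $2(\sqrt n+4)$, giving exactly $\check\epsilon=2L_g M\Delta\big(n^{-\delta}+\frac{(\sqrt n+4)\Delta}{n}\big)+\frac{L_h M\Delta}{n}$.

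I expect the main obstacle to be the randomized $\bGamma\to\Gamma$ transfer rather than the algebra: all expectations must be taken over the product law $\tilde\mu=\prod_i\tilde\mu_i$, and the $h_i$ comparison must be carried out \emph{conditionally} on $X_{-i}$ so that only player $i$'s coordinate is perturbed. Bounding $|f_i-\bfi|$ pointwise and doubling would instead drag the $O(1)$ term $h_i(\frac1n\sum_j a_j x_j)$ into the estimate and destroy the $O(1/n)$ scaling. Everything else is bookkeeping of constants already controlled by Assumption \ref{ass:example}.
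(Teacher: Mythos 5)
Your proposal is correct and follows exactly the route the paper intends (the paper gives no separate proof for this proposition, but it is the mixed-strategy analogue of the proof of Theorem \ref{thm:congestionPNE}: chain Proposition \ref{lm:Nash-disagrregative}, then Proposition \ref{prop:mixed} in place of Theorem \ref{thm:main} --- justified by Lemma \ref{lm: approx game}(1) giving $H=L_g\Delta$, $\gamma=1$ --- and finally the $\bGamma\to\Gamma$ comparison of Lemma \ref{lm: approx game}(2), whose pointwise inequalities survive taking expectations over the product law). Your bookkeeping $2+(\sqrt n+1)+1=\sqrt n+4$ reproduces the stated constant exactly, and your observation that the \emph{full} stability condition licenses the generator profile of Lemma \ref{lm:decomposeSF} is precisely the point the paper relies on.
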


\section{Numerical example}\label{sec:numerical}
%\paragraph{Research subject and motivation.}
In this section, we consider an example of flexible electric vehicle charging control; the convex version of this problem was studied by Jacquot et al. \cite{Paulin}.
%\smallskip
%
%\paragraph{Applications.}
%Many practical problems can be formulated as an LNAG, such as: communication networks problems \cite{Tran2011}, flexible energy consumption problems \cite{Paulin}, etc.
%
%In this paper, we give an example of Electric Vehicle charging problem as the numerical experiment in Sec. 4. Here is the description of this model:
%
%Consider the flexible energy charging problem for Electric Vehicle (EV) introduced in Jacquot et al. \cite{Paulin}. 
\tcbl{Each player must charge the battery of her electric vehicle when she arrives home after work. A player’s cost is defined in the form of \eqref{eq:cost} so that it depends on both her own consumption and the aggregate consumption of all the players. Such a design is intended to ensure that 
the Nash equilibria or $\epsilon$-Nash equilibria attain the goal of decreasing the peak demand and smoothing the load curve of the power grid.
%are usually considered as natural outcomes resulting from the interaction between the players.
%Mathematically, we are interested in the existence and computation of Nash or $\epsilon$-Nash equilibria of the game.  
In this context, the technical constraints of the battery of an electric vehicle 
%do not allow all the  electrical consumption profiles, and generally imply non-convex action sets, 
limit the number of feasible consumption profiles. They also generally imply non-convex action sets; for example, they only allow for discrete power consumption profiles. %However, most theoretical or algorithmic results concerning games are limited to the convex framework, in which the players' action sets as well as their cost functions are convex. Our motivation is precisely to propose theoretical and algorithmic tools considering large \emph{non-convex} sum-aggregative games, in order to address demand-side management applications in a relevant way.
}

\tcbl{More specifically,} one day is divided into peak hours (e.g., 6 am--10 pm) and off-peak hours. The electricity production cost function for total flexible loads of $\ell^{P}$ and $\ell^{O\!P}$ at peak and off-peak hours are, respectively, $C^P(\ell^P)=\alpha^P_0 \ell^P+\beta_0 (\ell^P)^2$ and $C^{O\!P}(\ell^{O\!P})=\alpha^{O\!P}_0 \ell^{O\!P}+\beta_0 (\ell^{O\!P})^2$, %\begin{equation*}%\label{eq:load}
%C^P(\ell^P):=\alpha^P_0 \ell^P+\beta_0 (\ell^P)^2\ ,\quad \textrm{and}\quad C^{O\!P}(\ell^{O\!P}):=\alpha^{O\!P}_0 \ell^{O\!P}+\beta_0 (\ell^{O\!P})^2\ ,
%\end{equation*}
where $ \alpha^P_0>\alpha^{O\!P}_0>0$ and $\beta_0>0$.
% Non flexible constants $\ell^P_{N\!F}$ and $\ell^{O\!P}_{N\!F} $ are estimated by 
%\begin{equation*}
%\ell^P_{N\!F}:=\frac{1}{\text{Card}\{\text{Peak hours}\}}\sum_{h\in \{Peak\}} \ell^h_{N\!F}\ ,\; \textrm{and}\quad \ell^{O\!P}_{N\!F}:=\frac{1}{\text{Card} \{\text{Off-Peak hours}\}}\sum_{h\in \text{Off-Peak}} \ell^h_{N\!F}\ .
%\end{equation*}
Player $i$'s action is denoted by $\ell_i=(\ell_i^{P},\ell_i^{O\!P})$,  where $\ell_i^{P}$ (resp. $\ell_i^{O\!P}$) is the peak (resp. off-peak) consumption of player $i$. Player $i$'s electricity bill is then defined by
\begin{equation*}
b_i(\ell_i,\ell_{-i}):=\frac{C^P(\ell^P)}{\ell^P}\ell_i^P+\frac{C^{O\!P}(\ell^{O\!P})}{\ell^{O\!P}}\ell_i^{O\!P}\, ,
\end{equation*}
where $\ell^P = \sum_i \ell_i^{P}$, and $\ell^{O\!P} = \sum_i \ell_i^{O\!P}$. 
Player $i$'s cost is then defined by 
\begin{equation}
\label{eq:PaulinGame0}
\phi_i(\ell_i,\ell_{-i})=b_i(\ell_i,\ell_{-i})+\gamma_i \Vert \ell_i-\ell^{ref}_i\Vert^2
\end{equation}
where $\gamma_i$ indicates the player's sensitivity to the deviation from her preference $\ell^{ref}$. In \cite{Paulin}, the action set of player $i$ is the convex compact set $S_i = \{ \ell_i=(\ell_i^{P},\ell_i^{O\!P}) \, \vert \, \ell_i^{P}+\ell_i^{O\!P}=e_i, \underline{\ell_i^{P}} \leq \ell_i^{P} \leq \overline{\ell_i^{P}},\underline{\ell_i^{O\!P}} \leq \ell_i^{O\!P} \leq \overline{\ell_i^{O\!P}}\}$, where $e_i$ stands for the energy required by player $i$ to charge an electric vehicle battery and $\underline{\ell_i^{P}}$ and $\overline{\ell_i^{P}}$ (resp. $\underline{\ell_i^{O\!P}}$ and $\overline{\ell_i^{O\!P}}$) are the minimum and maximum power consumption for player $i$ during peak (resp. off-peak) hours. However, for various reasons, such as finite choices for charging power or battery protection guidelines that indicate that the charging must be interrupted as infrequently as possible, the players' action sets can be non-convex. For example, in this paper a particular case with the non-convex action set $S^{N\!C}_i = \{ \ell_i=(\ell_i^{P},\ell_i^{O\!P}) \, \vert\, \ell_i^{P}+\ell_i^{O\!P}=e_i,   \ell_i^{P} \in \{\underline{\ell_i^{P}},\overline{\ell_i^{P}}\}\}$ is adopted for numerical simulation.
\smallskip

%\tcr{This section applies Algorithm \ref{algo:BCP} to  game \eqref{eq:PaulinGame0} to a specific problem in the field of electric systems taking the form of \eqref{eq:game} in dimension $d=1$.} 

Let us apply Algorithm \ref{algo:BCP} to this game. The asymptotic performance of the algorithm for large $n$ is illustrated. %To avoid rescaling cost functions for each $n$, \emph{multiplicatively} $\epsilon$-\emph{PNE} defined below are considered instead of additively $\epsilon$-PNE defined by Definition \ref{def:epsilonNash}.
%
%
%\begin{defn}[Multiplicatively $\epsilon$-PNE]
%For a constant $\epsilon \geq 0$, a \emph{multiplicatively $\epsilon$-PNE} $x^\epsilon\in \X$ in game $\Gamma$ is a profile of actions of the $n$ players such that, for each player $i \in N$,
%\begin{equation*}
%f_i(x^\epsilon_i, x^\epsilon_{-i}) - \inf_{x_i\in \X_i}  f_i(x_i, x^\epsilon_{-i}) \leq \epsilon \big(\sup_{x_i\in \X_i} f_i(x_i, x^\epsilon_{-i}) - \inf_{x_i\in \X_i}  f_i(x_i, x^\epsilon_{-i})\big)\, .
%\end{equation*}
%If $\epsilon=0$, then $x^\epsilon$ is a PNE.
%
%For $x\in \X$, $\epsilon(x):=\min\{\epsilon \geq 0\; |\; x  \text{ is a multiplicatively } \epsilon\text{-PNE}\,\}$ is called the \emph{relative error} of $x$.
%\end{defn}

First, game \eqref{eq:PaulinGame0} is reformulated with uni-dimensional actions. For simplification, suppose that all the players have the same type of electric vehicle (EV), a 2018 Nissan Leaf, with a battery capacity $e$, and two charging rate levels $p_{\min}$ and $p_{\max}$. The total consumption of player $i$ is denoted by $e_i$ and determined by a parameter $\tau_i$ as follows: $e_i=(1-\tau_i)e=\ell_i^{P}+\ell_i^{O\!P}$, where $\tau_i\in [0,1]$ signifies the remaining proportion of energy in the player's battery when  she arrives at home. Let $x_i :=\frac{\ell_i^{P}}{e}$ denote player $i$'s strategy in the following reformulation of game \eqref{eq:PaulinGame0}:
\begin{equation}
\label{eq:PaulinGame}
\tilde f^{(n)}_i(x_i,x_{-i})=\tilde b^{(n)}_i(x_i,x_{-i})+\tilde \gamma_i \Vert x_i-x^{ref}_i\Vert^2\ ,
\end{equation}
where $\tilde{\gamma}_i$ indicates how much player $i$ cares about deviating from her preferred consumption profile and is uniformly set to be $ne$ for simplification, and
\begin{align*}
	\tilde b^{(n)}_i(x_i,x_{-i}) & =\, (\alpha_0^P+\beta_0 n e \frac{1}{n}\sum_j (1-\tau_j)x_j)\ell_i^P+(\alpha_0^{O\!P}+\beta_0 ne \frac{1}{n}\sum_j(1-\tau_j) (1-x_j))\ell_i^{O\!P}\\ 
%	=&
%	\frac{E(1-r_i)}{n}\Big [
%	(\alpha_0^P+\beta_0E \frac{1}{n}\sum_j(1-r_j) x_j)x_i+(\alpha_0^{O\!P}+\beta_0E \frac{1}{n}\sum_j (1-r_j)(1-x_j))(1-x_i)
%	\Big ]\\
&  = \, e (1-\tau_i)\Big [
	\big(\alpha_0^P-\alpha_0^{O\!P}-\beta_0 n e+2\beta_0 n e \frac{1}{n}\sum_j(1-\tau_j)x_j \big)x_i \\
& ~~~ +\alpha_0^{O\!P}+\beta_0 n e-\beta_0 n e\frac{1}{n}\sum_j(1-\tau_j)x_j\Big ] \ .
\end{align*}
%with $\beta :=\beta_0E$.

The non-convex action set of player $i$, introduced in Section \ref{sec:introdution} as $S^{N\!C}_i = \{ \ell_i=(\ell_i^{P},\ell_i^{O\!P}) \, \vert\, \ell_i^{P}+\ell_i^{O\!P}=e_i,   \ell_i^{P} \in \{\underline{\ell_i^{P}},\overline{\ell_i^{P}}\}\}$, is now translated into $\X_i=\{\underline{x}_i, \overline{x}_i\} \subset [0,1]$, where $\underline{x}_i$ and $\overline{x}_i$ correspond, respectively, to charging at $p_{min}$ and $p_{max}$. 
%\begin{figure}[h]
%	\centering
%	\subfigure[Arrival time samples following Von Mises Distribution with parameter $\kappa = 1$.]
%	{
%		\includegraphics[width=0.45\textwidth]{0-2.png}
%		\label{fig:arrive}
%	}\qquad
%    \subfigure[Departure time samples following Von Mises Distribution with parameter $\kappa = 1$.]
%    {
%    	\includegraphics[width=0.45\textwidth]{0-3.png}
%    	\label{fig:departure}
%    }\\
%	\subfigure[Remaining power ratio samples following Beta Distribution with parameter $(a,b)=(2,5)$.]
%	{
%		\includegraphics[width=0.45\textwidth]{0-1.png}
%		\label{fig:remain}
%	}
%	\caption{10000 samples for the variables in EV charging model.}
%	\label{fig:distributions}
%\end{figure}
%

By extracting the common factor $ne(1-\tau_i)$, player $i$'s cost function becomes
\begin{equation}\label{eq: example}
f^{(n)}_{i}\left(x_{i}, x_{-i}\right):=\bigg\langle g^{(n)}\Big(\frac{1}{n} \sum_{j=1}^{n}(1-\tau_j)  x_{j}\Big), x_{i}\bigg\rangle+h^{(n)}\Big(\frac{1}{n} \sum_{j=1}^{n} (1-\tau_j) x_{j}\Big)+\frac{r^{(n)}_{i}\left(x_{i}\right)}{1-\tau_i}\, ,
\end{equation}
 where $g^{(n)}(y) :=\frac{\alpha_{0}^{P}-\alpha_{0}^{O\!P}}{n} + \beta_0  e(2 y-1)$, $h^{(n)}(y):= \frac{\alpha_{0}^{O\!P}}{n}+\beta_0 e(1- y)$, and $r^{(n)}_{i}(y):= \|y-x^{ref}_i\|^{2}$ for $y\in \RR$, where $\alpha_{0}^{P}=-4.17+0.59\times 12n$ (\euro{}/kWh), $\alpha_{0}^{O\!P}=-4.17+0.59\times 8n$  (\euro{}/kWh), and $\beta_0=0.295$ (\euro{}/kWh$^2$) according to Jacquot et al. \cite{Paulin}.
% \begin{equation}
%\begin{split}
%%x_i &\in X_i = \{X_i^0, X_i^1\}\\
%g(y) &=\alpha_{0}^{P}-\alpha_{0}^{O\!P}-\beta+2 \beta y,\\
%h(y)&= \alpha_{0}^{O\!P}+\beta-\beta y,\\
%\ell_{i}(y)&=\left\|y-X_i^1\right\|^{2}.\\
%\end{split}
%\end{equation}

%From Example \ref{exe: sac a dos}, we can use dynamical programming to search a good $\epsilon$-Nash equilibrium for original game \ref{eq: example} . Due to the linearity of this problem, most solutions are around of $X_i$. Thus, the projection for each action one by one will give us a good approximation, see Figure \ref{fig:origine} .

\paragraph{Simulation parameters}
The peak hours are between 6 am and 10 pm, while the remaining hours of the day are off-peak hours. The battery capacity of a 2018 Nissan Leaf is $e = 40$ kWh. 
The discrete action set of player $i$ is determined as follows. The players' arrival times at home are independently generated according to a \textit{Von Mises} distribution with $\kappa=1$ between 5 pm and 7 pm. Their departure times are independently generated according to a \textit{Von Mises} distribution  with $\kappa=1$ between 7 am and 9 am. The proportion $\tau_i$ of energy in the battery when a player arrives at home  is independently generated according to a \textit{Beta} distribution with the parameter $\beta(2,5)$. Once a player arrives at home, she starts charging at one of the two available levels, $p_{min}=3.7$ kW or $p_{max}=7$ kW. 
This power level is maintained until the energy requirement $e_i$ is reached. The arrival and departure time parameters are defined such that the problem is always feasible, i.e., the energy requirement $e_i$ can always be reached during the charging period by choosing the power level $p_{max}$. 
%For this reason, each action profiles has at most 2 elements which depends on the arrival time, departure time and remaining power. 
%%%
Players are all assumed to prefer to charge their vehicle as fast as possible, so that $x_i^{ref}=\overline{x}_i$ for all $i$. Fifty instances of the problem are considered for the numerical test. They are obtained by independent simulations of the aforementioned  parameters (players' arrival and departure times and remaining energy when they arrive at home).
%To simplify the calculation and make sure the same order, the parameter for preference is considered to be $\frac{E}{n}$ for all $i$.

Algorithm \ref{algo:BCP} is applied to the EV charging game $\Gamma^{(n)}$ \eqref{eq: example} for $n=2^s$, $s=1,\ldots, 15$. For each game $\Gamma^{(n)}$, for each iteration $k$ of the algorithm, let $x^{(n),k}$ denote the $k^{th}$ iterate of Algorithm~\ref{algo:BCP} applied to game $\Gamma^{(n)}$. Then, the relative error $\epsilon^{(n),k}$ of $x^{(n),k}$ is given by
\begin{align*}
\epsilon^{(n),k} := \min \bigg\{\epsilon \geq 0 \, \Big\vert \, & f^{(n)}_i(x^{(n),k}_i, x^{(n),k}_{-i}) - \inf_{x_i\in \X_i}  f^{(n)}_i(x_i, x^{(n),k}_{-i})  \\
& \leq \epsilon \big(\sup_{x_i\in \X_i} f^{(n)}_i(x_i, x^{(n),k}_{-i}) - \inf_{x_i\in \X_i}  f^{(n)}_i(x_i, x^{(n),k}_{-i})\big) \bigg\}\ .
\end{align*}

%\tcbl{Figure \ref{figs} represents the relative error $\epsilon^{(n),k}$ averaged over fifty instances of the problem as a function of the number of iterations $k$ ( for a fixed number of players $n$, $n=2^6, 2^7, \ldots, 2^{13}$). }

\begin{figure}[h!]
	\centering
%	\subfigure[Convexification of game \eqref{eq: example}.]
%	{
%		\includegraphics[width=0.5\textwidth]{convex_game.png}
%		\label{fig:convex}
%	}\\
	%\subfigure[Original game \eqref{eq: example}.]
	{
		\includegraphics[width=1\textwidth]{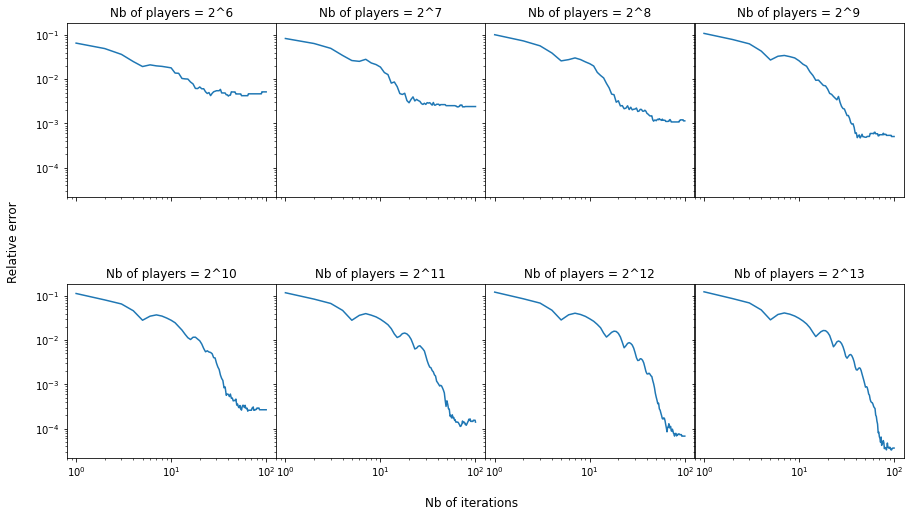}
		%\label{fig:origine}
	}%\\
	\caption{Log-log chart of relative error $\epsilon^{(n),k}$ (averaged over fifty instances of the problem) as a function of the number of iterations $k$ (for a fixed number of players $n=2^6, 2^7, \ldots, 2^{13}$).}
	\label{figs}
\end{figure}
\begin{figure}[h!]
	\centering
%	\subfigure[Convexification of game \eqref{eq: example}.]
%	{
%		\includegraphics[width=0.5\textwidth]{convex_game.png}
%		\label{fig:convex}
%	}\\
	%\subfigure[Original game \eqref{eq: example}.]
	{
		\includegraphics[width=1\textwidth]{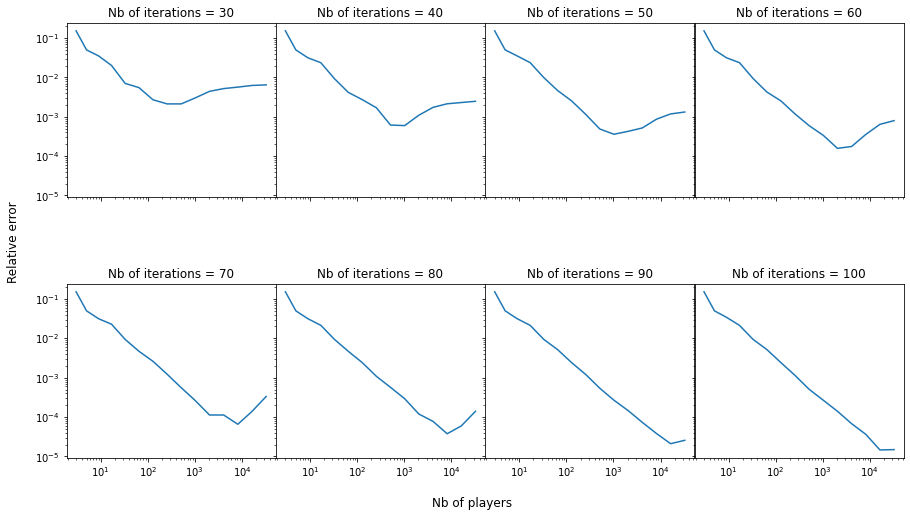}
		%\label{fig:origine}
	}%\\
	\caption{Log-log chart of relative error $\epsilon^{(n),k}$ (averaged over fifty instances of the problem) as a function of the number of players $n$ (for a fixed number of iterations $k=30,40,\ldots,90,100$).}
	\label{figs2}
\end{figure}

As Figure \ref{figs} shows, the relative error decreases with the number of iterations to a certain limit. This limiting relative error decreases with the number of players $n$. This observation is consistent with equation \eqref{eq:estimation} in Proposition \ref{lm:Nash-disagrregative}. For Figure \ref{figs2},  according to Proposition \ref{lm:Nash-disagrregative}, when the iteration number $k$ is fixed, due to the domination of the term $\frac{2 L_g M \Delta}{n}$ in equation \eqref{eq:estimation} when $n$ is small, $\epsilon^{(n),k}$ first decreases linearly with $n$ before reaching a certain threshold. After that,  $\frac{\sqrt{2CL_g}M}{m}\sqrt{\frac{n}{k}} $ dominates the relative error value so that $\epsilon^{(n),k}$ may increase with $n$. The threshold itself increases with the iteration number $k$. This is exactly what Figure \ref{figs2} shows.

\section{Conclusion and perspectives}\label{sec:perspectives}

\tcbl{This paper developed an original approach for the study of non-convex games. Non-convexities are widely present in real applications, and they are known to add nontrivial difficulties in the analysis of existence and computation of equilibria. %is proposed to study approximate equilibria in nonconvex aggregative games.
%In particular, we proposed an original algorithm based on the introduction of an auxiliary convexified game allowing to compute numerically approximate equilibria for some nonconvex aggregative games.
Our approach is restricted to large aggregative games because it is based on the Shapley-Folkman Lemma which essentially exploits the aggregative form with a large number of players. This category covers nevertheless a broad class of games with practical interest, including congestion games.
In particular, we illustrated the relevance of this approach with an industrial application to the coordination of electric vehicle charging. }

\paragraph{Distributed and randomized ``Shapley-Folkman disaggregation''.} In Section \ref{subsec:randomSF}, a distributed disaggregating method is introduced to obtain a randomized ``Shapley-Folkman disaggregation'' for the case $\gamma \leq 1$. It is extremely fast and easy to carry out: once an \tcbl{$\epsilon$-PNE $\tx$ is obtained for the convexified game, as well as the profile of generators $(Z(i,\tx))_i$, each player $i$ randomly chooses one feasible action that is in $Z(i,\tx)$}, according to the distribution law $\alpha(i,x)$. This procedure returns an $\mathcal{O}(\frac{1}{\sqrt{n}^\gamma})$-MNE, with the error vanishing when the number of players goes to infinity. However, even if an $\mathcal{O}(\frac{1}{n^\gamma})$-PNE can be difficult to obtain using an exact ``Shapley-Folkman disaggregation'' especially if a large centralized program is involved, for example, to solve \eqref{eq:argmin}, it would be desirable to find other algorithms that can find better approximations of the Nash equilibria of a non-convex game. Distributed and randomized algorithms are appealing because they can be faster to carry out, they require less coordination and hence are more tractable, and they take advantage of the law of large numbers when $n$ is large. 
%\bigskip

\paragraph{Aggregation and disaggregation of clusters.} In a power grid management setting, flexible agents can be regrouped into clusters, and each cluster is commanded by a so-called aggregator. The EV charging game considered in this paper then takes place between the relatively few aggregators instead of the individuals. This “aggregate game” is different from the EV charging game described in this paper, as the individuals are no longer autonomous but are commanded by their respective aggregators rather than choosing their own charging behaviors. One can build an aggregate model for each aggregator by defining his action set as the set of the aggregate actions of the individuals in his cluster and his cost function as an aggregate of the individuals’ costs. When the clusters are large, it is possible to show, with the help of the Shapley-Folkman Lemma, that the aggregators’ action sets and cost functions are almost convex. Then, the game admits an $\epsilon$-PNE (via Rosen’s existence theorem), and its computation could be relatively easier owing to the small number of players. However, each aggregator then has to reconstruct for each individual under his control a feasible action consistent with their aggregate action at the equilibrium of this “aggregate game.” When the constraints of each flexible individual are non-convex, this aggregation/disaggregation approach can be rather difficult to implement.  An original technique based on the Shapley-Folkman Lemma is proposed in Hreinsson et al. \cite{HreinssonAl2021} within the optimization framework, with applications to the management of consumption flexibilities in power systems.  

\paragraph{Acknowledgments.} We are grateful to J. Fr\'ed\'eric Bonnans and Rebecca Jeffers for stimulating discussions and comments. We particularly thank the reviewers and the associated editor for their very relevant remarks, which have helped greatly to improve the paper.

%\pagebreak
\section*{Appendix A: PNE in l.s.c.$\!$ convex games }
\tcbl{Since we have not found a specific reference of the extension of Rosen's theorem to the l.s.c. case, we prefer to provide our own proof for the sake of completeness.}

\begin{lem}\label{lm:rosen}
Let $R$ be a nonempty convex compact set in $\RR^{n}$. If the real-valued function $\rho(x,y)$ defined on $R\times R$ is continuous in $x$ on $R$ for any fixed $y$ in $R$, l.s.c. in $(x,y)$ on $R\times R$, and convex in $y$ on $R$ for any fixed $x$ in $R$, then the set-valued map $\zeta: R\rightarrow R$, $ x\mapsto \zeta(x) = \argmin_{z\in R}\rho(x,z)$ has a fixed point.	
\end{lem}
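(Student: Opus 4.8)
The plan is to verify that $\zeta$ satisfies the hypotheses of Kakutani's fixed-point theorem \cite{Kakutani1941}: that it is a set-valued map from the nonempty compact convex set $R$ into itself with nonempty, convex, compact values and a closed graph. Since $R$ is already nonempty, compact and convex, the work reduces to checking the three properties of the values together with upper hemicontinuity.

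First I would record that joint lower semicontinuity of $\rho$ forces $z\mapsto\rho(x,z)$ to be l.s.c. on $R$ for each fixed $x$ (simply restrict a jointly l.s.c. function to the slice $\{x\}\times R$). Being l.s.c. on the compact set $R$, this map attains its infimum, so $\zeta(x)\neq\emptyset$; moreover its set of minimizers is exactly a sublevel set of $\rho(x,\cdot)$ at the minimal value, hence closed, and as a closed subset of the compact $R$ it is compact. Convexity of $\rho(x,\cdot)$ then makes this set of minimizers convex. Thus $\zeta$ has nonempty, compact, convex values.

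The heart of the argument is the closed-graph property. I would take sequences $x_k\to x$ and $z_k\to z$ in $R$ with $z_k\in\zeta(x_k)$, and aim to prove $z\in\zeta(x)$. Fix an arbitrary $w\in R$. Optimality of $z_k$ gives $\rho(x_k,z_k)\le\rho(x_k,w)$. Passing to the limit, I would invoke a different hypothesis on each side: joint lower semicontinuity yields $\rho(x,z)\le\liminf_k\rho(x_k,z_k)$ on the left, while continuity of $\rho(\cdot,w)$ in the first variable gives $\rho(x_k,w)\to\rho(x,w)$ on the right. Chaining these, $\rho(x,z)\le\liminf_k\rho(x_k,z_k)\le\liminf_k\rho(x_k,w)=\rho(x,w)$, and since $w$ was arbitrary, $z$ minimizes $\rho(x,\cdot)$, i.e. $z\in\zeta(x)$. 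Because the codomain $R$ is compact, a closed graph is equivalent to upper hemicontinuity, so all of Kakutani's assumptions are met and $\zeta$ admits a fixed point.

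The only delicate point is the bookkeeping in the closed-graph step: the two regularity hypotheses on $\rho$ must be applied to the correct terms — lower semicontinuity to control the value at the moving minimizer $(x_k,z_k)$, where one can only hope for a one-sided limit, and continuity in the first argument to pin down the comparison value $\rho(x_k,w)$ at the fixed point $w$. I expect this matching to be the main (though modest) obstacle; the remaining nonemptiness, convexity and compactness verifications are routine.
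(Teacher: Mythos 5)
Your proposal is correct and follows essentially the same route as the paper: both verify Kakutani's hypotheses, establish nonempty, compact, convex values from lower semicontinuity and convexity of $\rho(x,\cdot)$ on the compact set $R$, and prove the continuity of $\zeta$ via the identical inequality chain (joint lower semicontinuity on the minimizing sequence, optimality of $z_k$, continuity in the first argument at the fixed comparison point). The only cosmetic difference is that you phrase the last step as a closed-graph argument combined with the standard equivalence to upper hemicontinuity for compact-valued maps, whereas the paper proves upper semicontinuity directly by contradiction.
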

\begin{proof}%[Proof of Lemma \ref{lm:rosen}]
Kakutani's fixed-point theorem \cite{Kakutani1941} will  be applied for the proof. First, let us show that $\zeta$ is a Kakutani map, i.e., (i) $\Gamma$ is upper semicontinuous (u.s.c.) in the set map sense and (ii) for all $x\in R$, $\zeta(x)$ is non-empty, compact and convex.
\smallskip

\noindent	(i) Fix $x\in R$. On the one hand, since $\rho(x,y)$ is convex w.r.t $y$, $\zeta(x)$ is convex. On the other hand, $\rho(x,y)$ is l.s.c in $y$, while $R$ is compact; hence $\rho(x,y)$ can attain its minimum w.r.t $y$ and $\zeta(x)$ is thus nonempty. Besides, since $\rho$ is l.s.c., $\zeta(x)=\{y|\rho(x,y)\leq \min_{z\in R}\rho(x,z)\}$ is a closed subset of compact set $R$; hence it is compact.
\smallskip
	
\noindent	(ii) Recall that the set-valued map $\zeta$ is u.s.c. if, for any open set $w\subset R$, set $\{x\in R|\, \zeta(x)\subset w\}$ is open. 

Let us first show by contradiction that, for arbitrary $x_0\in R$, for any $ \epsilon>0$, there exists $\delta>0$ such that for all $ z\in B(x_0,\delta),\, \zeta(z)\subset\zeta(x_0)+B(0,\epsilon)$. If this is not true, then there exists $\epsilon_0>0$ and, for all $n\in \NN^*$, the point $z_{n}\in B(x_0, \frac{1}{n})$ such that there exists $y_{n}\in\zeta(z_{n})$ with $d(y_{n},\zeta(x_0))>\epsilon_0$. Since the sequence $\{y_{n}\}$ is in the compact set $R$, it has a subsequence $y_{\phi(n)}$ converging to some $\bar{y}$ in $R$, and $d(\bar{y},\zeta(x_0))\geq \epsilon_0$. Then, for all $y\in R$,
\begin{equation*}%\label{eq: usc}
\rho(x_0,\bar{y})\leq\varliminf_{n \rightarrow \infty}\rho(z_{\phi(n)},y_{\phi(n)})\leq\varliminf_{n \rightarrow \infty}\rho(z_{\phi(n)},y)=\rho(x_0,y)\, ,
\end{equation*}
where the first inequality is due to the lower semicontinuity of $\rho$ in $(x,y)$, the second inequality is due to the definition of $\zeta(z_{\phi(n)})$, and the third equality is due to the continuity of $\rho$ in $x$. This shows that $\bar{y}\in \zeta(x_0)$, in contradiction with the fact that $d(\bar{y},\zeta(x_0))\geq \epsilon_0$.
	
Now fix arbitrarily an open set $w\subset R$ and some $x_0\in R$ such that $\zeta({x_0})\subset w$. Since $\zeta({x_0})$ is compact while $w$ is open, there exists $\epsilon>0$ such that $\zeta(x_0)+B(0,\epsilon)\subset w$. According to the result of the previous paragraph, for this particular $\epsilon$, there exists $ \delta>0$ such that $\zeta(z)\subset\zeta(x_0)+B(0,\epsilon)\subset w$ for all $z\in B(x_0,\delta)$. This means that $B(x_0,\delta)\subset \{x\in R\, |\, \zeta(x)\subset w\}$. As a result, the set $\{x\in R\, |\, \zeta(x)\subset w\}$ is open.
	
Finally, according to Kakutani's fixed-point theorem, there exists $\tx\in R$ such that $ \tx\in \zeta(\tx)$.
\end{proof}	

\begin{defn}
A family of real-valued functions $\{f(\cdot, y) : \X \rightarrow \RR \,|\, y\in \Y\}$ indexed by $y$, with $\X \subset \R^{d_1}$ and $\Y\subset \R^{d_2}$, is \emph{uniformly equicontinuous} if, for all $\epsilon>0$, there exists $\delta$ such that, for all $y\in \Y$, $ \|f(x_1,y)-f(x_2,y)\|\leq \epsilon$ whenever $\|x_1-x_2\|\leq \delta$.
\end{defn}

\begin{thm}[Existence of PNE in l.s.c. convex games]\label{thm:lsc}
In an $n$-player game $\Gamma$, if, for each player $i\in \{1, \ldots, n\}$,
\begin{enumerate}[(1)]
\item the action set $\X_i$ is a convex compact subset of $\R^d$,
\item the cost function $f_i(x_i,x_{-i}): \X_i \times \prod_{j\neq i} \X_j  \rightarrow \RR$ is convex and l.s.c.$\!\!$ in $x_i\in \X_i$ for any fixed $x_{-i}\in \prod_{j\neq i} \X_j $, and
\item %for any fixed $x_{-i}\in \prod_{j\neq i} \X_j $, function $f_i(\cdot,x_{-i})$ is l.s.c.$\!\!$ on $\X_i$, while 
the family of functions $\{f_i(x_i, \cdot): \prod_{j\neq i} \X_j \rightarrow \RR \, |\, x_{i}\in \X_i \}$ are uniformly equicontinuous,
\end{enumerate}
 then $\Gamma$ admits a PNE.
\end{thm}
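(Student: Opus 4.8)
The plan is to deduce the theorem from Lemma~\ref{lm:rosen} by applying it to the aggregated best-response map built from a Nikaido--Isoda-type function. Concretely, I would take $R := \X = \prod_{i\in N}\X_i$, which is convex and compact as a product of convex compact sets, and define
$$\rho(x,y) := \sum_{i\in N} f_i(y_i, x_{-i})\, ,\qquad x,y\in\X\, .$$
The associated map $\zeta(x)=\argmin_{y\in\X}\rho(x,y)$ decouples across players, since $f_i(y_i,x_{-i})$ depends on $y$ only through the block $y_i$; hence $y\in\zeta(x)$ if and only if $y_i\in\argmin_{z_i\in\X_i} f_i(z_i,x_{-i})$ for every $i$. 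Consequently a fixed point $x^{*}\in\zeta(x^{*})$ satisfies $x^{*}_i\in\argmin_{z_i\in\X_i} f_i(z_i,x^{*}_{-i})$ for all $i$, which is exactly the PNE condition. So it suffices to verify that $\rho$ fulfils the three hypotheses of Lemma~\ref{lm:rosen}.

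Two of the three are routine. For each fixed $x$, convexity of $y\mapsto\rho(x,y)$ holds because each summand $f_i(y_i,x_{-i})$ is convex in $y_i$ by hypothesis~(2), and a function of the full profile $y=(y_1,\ldots,y_n)$ that depends on $y$ only through $y_i$ and is convex in $y_i$ is convex in $y$; a finite sum of such terms is convex. For each fixed $y$, continuity of $x\mapsto\rho(x,y)$ follows from hypothesis~(3): each term $f_i(y_i,x_{-i})$ depends on $x$ only through $x_{-i}$, and the map $x_{-i}\mapsto f_i(y_i,x_{-i})$ is continuous as a member of the equicontinuous family, so the finite sum is continuous in $x$.

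The hard part will be the joint lower semicontinuity of $\rho$ on $\X\times\X$, and this is precisely where hypothesis~(3) is needed in its \emph{uniform} form rather than as mere separate continuity. It suffices to show each term is jointly l.s.c. Given sequences $y_i^{(k)}\to y_i$ and $x_{-i}^{(k)}\to x_{-i}$, I would split
$$f_i(y_i^{(k)},x_{-i}^{(k)}) = \big(f_i(y_i^{(k)},x_{-i}^{(k)})-f_i(y_i^{(k)},x_{-i})\big) + f_i(y_i^{(k)},x_{-i})\, .$$
By the uniform equicontinuity of $\{f_i(x_i,\cdot)\}$, for any $\epsilon>0$ the first bracket is bounded by $\epsilon$ once $k$ is large, \emph{uniformly} in the point $y_i^{(k)}$; meanwhile the $\liminf$ of the second term is at least $f_i(y_i,x_{-i})$ by the l.s.c.\ of $f_i$ in its own argument (hypothesis~(2)). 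Letting $\epsilon\downarrow 0$ yields $\varliminf_{k} f_i(y_i^{(k)},x_{-i}^{(k)})\ge f_i(y_i,x_{-i})$, i.e.\ joint lower semicontinuity; summing over $i$ gives the same for $\rho$. With all three hypotheses established, Lemma~\ref{lm:rosen} supplies a fixed point of $\zeta$, which is the desired PNE. The only genuinely nontrivial point is thus the uniform-equicontinuity step above, which is exactly the ingredient allowing l.s.c.\ (rather than full continuity) of the costs in the players' own actions, and is why the theorem strictly extends Rosen's.
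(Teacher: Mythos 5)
Your proposal is correct and follows essentially the same route as the paper: the same Nikaido--Isoda aggregation $\rho(x,y)=\sum_i f_i(y_i,x_{-i})$, the same reduction to Lemma~\ref{lm:rosen}, and the same splitting trick (subtract and add the term with the limit second argument, control the difference uniformly via equicontinuity, then invoke l.s.c.\ in the own argument) to get joint lower semicontinuity — you just carry it out term by term where the paper does it for $\rho$ as a whole. No gaps.
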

\begin{proof}%[Proof of Theorem \ref{l.s.c}:]
Define the function $\rho(x,y): \X \times \X \rightarrow \R$ by $\rho(x,y) = \sum_{i=1}^{n} f_i(y_i,x_{-i})$, where $\X = \prod_i \X_i$. It is easy to see that a fixed point of the set-valued map $\zeta: \X \rightarrow \X$, $ x\mapsto \zeta(x) = \argmin_{z\in R}\rho(x,z)$ is a Nash equilibrium of game $\Gamma$.

In order to apply Lemma \ref{lm:rosen}, one needs to show the following: (i) $\rho(x,y) $ is continuous in $x$ for each fixed $y$; (ii) $\rho(x,y) $ is l.s.c. in $(x,y)$; (iii) $\rho(x,y) $ is convex in $y$ for each fixed $x$.

Results (i) and (iii) follow straightforwardly from the definition of $\rho$.

For (ii), first note that, by the uniform equicontinuity of  $\{f_i(x_i, \cdot): \prod_{j\neq i} \X_j \rightarrow \RR \, |\, x_{i}\in \X_i \}$  for each $i$ and the fact that $n$ is finite, $\{\rho(\cdot, y), y\in R\}$ is uniformly equicontinuous.  Let $(x^k,y^k)$ be a sequence in $\X \times \X$ indexed by $k$ that converges to $(x,y) \in \X \times \X$. Then,
\begin{equation*}
\begin{split}
\varliminf_{k\rightarrow \infty} (\rho(x^k,y^k)-\rho(x,y)) = &\varliminf_{k\rightarrow \infty}(\rho(x^k,y^k) - \rho(x,y^k) + \rho(x,y^k)- \rho(x,y))\\
= & \varliminf_{k\rightarrow \infty}(\rho(x,y^k)- \rho(x,y))\\
\geq & 0 \, ,
\end{split}
\end{equation*}
where the second equality is due to the uniform equicontinuity of $\{\rho(\cdot,y), y\in \X\}$ and the last inequality is because $\rho(x, y)$ is l.s.c. in $y$ for any fixed $x$.
\end{proof}

\begin{rem}
The property (3) is weaker than the condition that  $f_i$ is continuous on $\X$. Indeed, since $\X$ is compact, $f_i(x_i,x_{-i})$  is uniformly continuous on $\X_i \times \prod_{j\neq i} \X_j$ which implies the equicontinuity of $\{f_i(x_i, \cdot): \prod_{j\neq i} \X_j \rightarrow \RR \, |\, x_{i}\in \X_i \}$. In other words, Rosen's theorem on the existence of convex continuous games with compact convex actions sets is a corollary of Theorem \ref{thm:lsc}.
\end{rem}

\section*{Appendix B: Other proofs and lemmata}

%Shapley and Folkman first derived their eponymous lemma in private communications before it was officially evoked and applied by Starr .
\begin{lem}[Shapley-Folkman Lemma \cite{Starr1969}]\label{lm:SF}
% For $n$ subsets $S_1, \ldots, S_n$ of $\RR^q$,  if $x \in \conv \sum^n_{i=1} S_i =\sum^n_{i=1} \conv S_i$, then there are points $x_i \in \conv S_i$ such that $x = \sum^n_{i=1} x_i$ and $x_i \in  S_i$ except for at most $q$ values of $i$, where $\conv$ signifies the convex hull, and the sum over sets are to be understood as a Minkowski sum. 
%(Shapley-Falkman Lemma \cite{Starr1969}). 
For $n$ compact subsets $S_1, \ldots, S_n$ of $\RR^q$,  let $x \in \conv \sum^n_{i=1} S_i =\sum^n_{i=1} \conv S_i$, where $\conv$ signifies the convex hull, and the sum over sets is to be understood as a Minkowski sum. Then,
\begin{itemize}
\item there is a point $x_i \in \conv S_i$ for each $i$ such that $x = \sum^n_{i=1} x_i$, and $x_i \in S_i$ except for at most $q$ values of $i$; and
\item there is a point $y_i \in  S_i$ for each $i$ such that $\|x-\sum_{i=1}^n y_i \|_{\mathbb{R}^q}\leq \sqrt{\min\{q,n\}} d$, where $d$ denotes the maximal diameter of $S_i$.
\end{itemize}
\end{lem}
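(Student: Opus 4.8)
The plan is to establish the two bullets in order, the first by a linear-algebraic extreme-point count and the second by a second-moment argument that upgrades the first. Throughout I take for granted the identity $\conv \sum_{i} S_i = \sum_{i} \conv S_i$ (the convex hull of a Minkowski sum equals the Minkowski sum of the convex hulls), stated in the lemma: it follows from the one-line computation that $\big(\sum_j \lambda_j a_j\big)+\big(\sum_k \mu_k b_k\big)=\sum_{j,k}\lambda_j\mu_k(a_j+b_k)$ is a convex combination of points of $A+B$, together with induction on $n$.

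For the first bullet I would start from an arbitrary decomposition $x=\sum_{i=1}^n x_i$ with $x_i\in\conv S_i$, and apply Carath\'eodory's theorem in $\RR^q$ to write each $x_i=\sum_{k=1}^{q+1}\lambda_i^k s_i^k$ with $s_i^k\in S_i$, $\lambda_i^k\ge 0$, $\sum_k\lambda_i^k=1$. Freezing the finitely many points $s_i^k$, I would then regard the weights $\lambda=(\lambda_i^k)$ as the unknowns of the polytope cut out by the $n$ normalization equalities $\sum_k\lambda_i^k=1$, the $q$ aggregate equalities $\sum_{i,k}\lambda_i^k s_i^k=x$, and nonnegativity. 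This polytope is nonempty, so it has an extreme point $\lambda^{*}$; at such a basic feasible solution the columns indexed by the support are linearly independent, whence the support has size at most the number of equality constraints, $n+q$. Since every group $i$ contributes at least one strictly positive weight (its weights sum to $1$), at most $q$ groups can contain two or more positive weights; every other group has a single weight equal to $1$, i.e. $x_i=s_i^k\in S_i$. Relabelling $\sum_k\lambda_i^{*k}s_i^k$ as $x_i$ yields the claimed decomposition with at most $q$, hence at most $\min\{q,n\}$, indices failing $x_i\in S_i$.

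For the second bullet I would reuse the structure just obtained. Let $B$ be the set of the at most $m:=\min\{q,n\}$ indices with $x_i\notin S_i$; for $i\notin B$ set $y_i=x_i\in S_i$, so only the indices of $B$ contribute to the error. The naive triangle inequality only yields $\|x-\sum_i y_i\|\le m\,d$, so the genuine point is to recover the square root. I would do this probabilistically: for $i\in B$, use the Carath\'eodory weights to define an $S_i$-valued random variable $Y_i$ with $\EE[Y_i]=x_i$, drawn independently across $i$. Setting $W=\sum_{i\in B}(x_i-Y_i)$, independence together with $\EE[W]=0$ kills the cross terms, so $\EE\|W\|^2=\sum_{i\in B}\EE\|x_i-Y_i\|^2=\sum_{i\in B}\mathrm{Var}(Y_i)$. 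Because $\EE\|Y_i-\EE Y_i\|^2\le\EE\|Y_i-c\|^2$ for any fixed $c$, choosing $c\in S_i$ bounds each variance by $d^2$, giving $\EE\|W\|^2\le m\,d^2$. Hence some realization satisfies $\|W\|\le\sqrt{m}\,d=\sqrt{\min\{q,n\}}\,d$, and the corresponding $y_i\in S_i$ are the desired points. The main obstacle is precisely this passage to the $\sqrt{\cdot}$ bound: a deterministic rounding would require a vector-balancing argument, whereas the second-moment computation makes the cancellation transparent, and the crude estimate $\mathrm{Var}(Y_i)\le d^2$ already delivers exactly the stated constant.
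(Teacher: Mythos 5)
Your proof is correct. Note, however, that the paper does not prove this lemma at all: it is quoted as a known result with a citation to Starr (1969), so there is no in-paper argument to compare against. On its own merits, your argument is a clean and standard modern treatment. For the first bullet you use the linear-programming route: Carath\'eodory to reduce to finitely many support points, then an extreme point of the weight polytope (nonempty because the initial weights are feasible, bounded because each group's weights lie in the simplex), whose support has size at most the $n+q$ equality constraints, forcing all but at most $q$ groups to collapse to a single point of $S_i$. This differs from Starr's original induction on the number of summands but is arguably more transparent. For the second bullet you replace Starr's deterministic construction by a second-moment randomized-rounding argument: independence plus $\EE[Y_i]=x_i$ kills the cross terms, and $\mathrm{Var}(Y_i)\leq d^2$ (valid since $Y_i$ and the comparison point $c$ both lie in $S_i$) gives $\EE\|W\|^2\leq \min\{q,n\}\,d^2$, hence a realization achieving the bound. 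This yields exactly the diameter-based constant $\sqrt{\min\{q,n\}}\,d$ asserted in the lemma (Starr's sharper version with the circumradius would follow by taking $c$ to be the circumcenter, but is not needed here). The only cosmetic slip is the phrase ``at most $q$, hence at most $\min\{q,n\}$'': the second bound comes from the trivial fact that there are only $n$ indices, not from the first; this does not affect anything since the first bullet only claims $q$ and your set $B$ satisfies $|B|\leq\min\{q,n\}$ for the trivial reason.
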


\medskip

In the proofs of Lemmata \ref{lm1},  \ref{lm:analyseconvexification}, and \ref{lem:approxeq}, in order to simplify the notation, $i$ and  $x_{-i}\in \tX_{-i}$ are arbitrarily fixed. Index $i$ and the parameter $x_{-i}$ are thus omitted in $f_i$, $\tfi$, $\X_i$, $\tXi$ and $Z(i,\cdot)$. 

\begin{lem}\label{lm1} 
Under Assumption \ref{ass:lsc}, for each $x_{-i}\in  \tX_{-i}$, 
\begin{enumerate}[(1)]
\item $\tf_i (x_i, x_{-i}) \leq f_i (x_i, x_{-i}) $ for all $x_i\in \X_i$;

\item the infimum in~\eqref{eq:costConv} can be attained, i.e., it is in fact a minimum for all $x_i\in \X_i$;

\item the function $\tfi(\cdot, x_{-i})$ is l.s.c. and convex on $\tXi$, and $\conv\, (\epi\, f_i(\cdot,x_{-i}))= \epi\, \tfi(\cdot,x_{-i}) =  \convhull\, (\epi\, f_i(\cdot,x_{-i}))$; 

\item both $\tfi(\cdot,x_{-i})$ and $f_i(\cdot, x_{-i})$ attain their minima on $\tXi$ and $\X_i$ respectively; and
\begin{equation}
\label{eq:minmin}
\min_{\txi \in \tXi} \tfi (\txi, x_{-i})=\min_{x_i\in \X_i} f_i( x_i,x_{-i})\ .
\end{equation}
In particular, if $ \tx_i \in \arg\min_{y_i \in \tXi} \tfi (y_i, \tx_{-i})$, then  $Z(i,\tx) \subset \arg\min_{y_i\in \X_i} f_i( y_i, \tx_{-i})$, where $Z(i,\tx)$ is an arbitrary  generator for $(i,\tx)$ defined in Definition \ref{def:w_i}.
\end{enumerate}
\end{lem}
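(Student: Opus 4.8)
The plan is to recast all four statements in terms of the epigraph $\epi f \subseteq \RR^{d+1}$, where I fix $x_{-i}$ and abbreviate $f := f_i(\cdot, x_{-i})$; by Assumption \ref{ass:lsc}(1), $f$ is l.s.c.\ on the compact set $\X_i$, hence bounded below, say by $c := \min_{\X_i} f$ (the minimum being attained by lower semicontinuity on a compact). Parts (1) and (2) are the warm-up. For (1), the degenerate admissible choice $z^1 = \cdots = z^{d+1} = x_i$ (with any $\alpha \in \S_d$) in \eqref{eq:costConv} gives objective value $f(x_i)$, so $\tf(x_i) \le f(x_i)$ on $\X_i$. For (2), I fix $x_i \in \tXi$ and note that the feasible set $\{(\alpha,(z^k)_k) : \alpha \in \S_d,\ z^k \in \X_i,\ \sum_k \alpha^k z^k = x_i\}$ is nonempty (Carath\'eodory, since $x_i \in \tXi = \conv \X_i$) and compact, while the objective $(\alpha,(z^k)) \mapsto \sum_k \alpha^k f(z^k)$ is l.s.c.\ (writing $\alpha^k f(z^k) = \alpha^k (f(z^k)-c) + \alpha^k c$ exhibits it, up to a continuous term, as a sum of products of a nonnegative continuous factor and a nonnegative l.s.c.\ factor, which are l.s.c.); Weierstrass then gives a minimizer, i.e.\ a generator for every $(i,x)$ with $x_i \in \tXi$.

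Write $g := \tf$ for the (now attained) value of the $d+1$-point formula \eqref{eq:costConv}. I would first establish that $g$ is l.s.c.\ on $\tXi$ directly. Given $x_n \to x$, pass to a subsequence realizing $\liminf_n g(x_n)$ and pick attaining representations $x_n = \sum_k \alpha_n^k z_n^k$, $g(x_n) = \sum_k \alpha_n^k f(z_n^k)$. By compactness of $\S_d \times \X_i^{d+1}$ extract $\alpha_n \to \alpha \in \S_d$ and $z_n^k \to z^k \in \X_i$; then $x = \sum_k \alpha^k z^k$ is feasible for $x$, and the l.s.c.\ of $f$ together with the product observation above yields $\liminf_n \sum_k \alpha_n^k f(z_n^k) \ge \sum_k \alpha^k f(z^k) \ge g(x)$. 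Hence $g(x) \le \liminf_n g(x_n)$, so $g$ is l.s.c.

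The crux is to identify $g$ with the convex envelope, i.e.\ to show $\epi g = \conv(\epi f)$; I expect this to be the main obstacle, because it hinges on the $(d+1)$-point bound in \eqref{eq:costConv}. The inclusion $\epi g \supseteq \conv(\epi f)$ amounts to $g \le (\text{lower envelope of } \conv(\epi f))$, and follows because any convex combination $\sum_j \mu_j (z_j, f(z_j))$ projecting to $x$ can be reduced to at most $d+1$ points of $\epi f$ by the epigraph form of Carath\'eodory: the relevant point lies on a supporting hyperplane of $\conv(\epi f)$, which is $d$-dimensional, so $d+1$ points of $\epi f$ within that hyperplane suffice, and minimality of the last coordinate forces each of them to have the form $(z^k, f(z^k))$. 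The reverse inclusion $\epi g \subseteq \conv(\epi f)$ is immediate since each admissible representation places $(x, g(x))$ in $\conv(\epi f)$. Consequently $\epi \tf = \conv(\epi f)$, which makes $\tf$ convex; combined with the l.s.c.\ just proved, $\epi \tf$ is closed, giving $\conv(\epi f) = \convhull(\epi f)$ and completing (3).

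Finally (4). Minimizing the last coordinate over $\conv(\epi f)$ equals minimizing it over $\epi f$, because any convex combination satisfies $\sum_k \alpha^k s_k \ge \min_k s_k \ge c$ while the value $c$ is already realized inside $\epi f \subseteq \conv(\epi f)$; reading this through $\epi \tf = \conv(\epi f)$ gives $\min_{\tXi} \tf = \min_{\X_i} f$. For the ``in particular'' claim, let $\tx_i \in \argmin_{\tXi} \tf$ with generator $Z(i,\tx) = (z^k)_k$ and coefficients $\alpha = \alpha(i,\tx) \in \S_d$. Then $\min_{\X_i} f = \tf(\tx_i) = \sum_k \alpha^k f(z^k)$ while $f(z^k) \ge \min_{\X_i} f$ for each $k$; since a convex combination attains its lower bound only when every positively weighted term does, each $z^k$ with $\alpha^k > 0$ lies in $\argmin_{\X_i} f$ (and any zero-weight points may be chosen among minimizers), so $Z(i,\tx) \subseteq \argmin_{\X_i} f$.
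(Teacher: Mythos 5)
Your proposal is correct and follows essentially the same route as the paper's proof: both arguments work with the epigraph of $f_i(\cdot,x_{-i})$, use compactness of $\S_d\times\X_i^{d+1}$ together with lower semicontinuity for attainment, and rest on a Carath\'eodory-type reduction to $d+1$ points to identify $\epi\,\tfi$ with $\convhull\,(\epi\, f_i)$. The differences are in packaging rather than substance. For (2) you replace the paper's explicit diagonal subsequence extraction by Weierstrass applied to a jointly l.s.c.\ objective on a compact feasible set, and your decomposition $\alpha^k f(z^k)=\alpha^k(f(z^k)-c)+\alpha^k c$ does correctly justify the joint lower semicontinuity; for (3) you prove lower semicontinuity of $\tfi$ directly by the same subsequence device before touching epigraphs, whereas the paper deduces it afterwards from closedness of $\convhull\,(\epi\, f_i)$. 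On the one genuinely delicate step you are in fact more explicit than the paper: the paper invokes ``Carath\'eodory'' to write a point of $\conv\,(\epi\, f_i)\subset\RR^{d+1}$ as a combination of $d+1$ (rather than $d+2$) points of $\epi\, f_i$, which is not literally Carath\'eodory's theorem, while you isolate this as the crux and give the supporting-hyperplane argument for points on the lower boundary. If you want to avoid the edge cases that argument carries (a possibly non-attained lower envelope, vertical supporting hyperplanes above relative-boundary points of $\tXi$), the cleaner standard route is to start from $d+2$ points, use affine dependence of $d+2$ points of $\RR^d$ to perturb the weights in a direction that does not increase $\sum_k\alpha^k f(z^k)$ until one weight vanishes. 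Your handling of zero-weight points of a generator in (4) also addresses a detail the paper's one-line proof leaves implicit.
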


\begin{proof}[Proof of Lemma \ref{lm1}] ~~\\
The lemma is a particular case of more general results well-known in the field of convex analysis that have been shown in various works, such as \cite[Lemma X.1.5.3]{HiriartLemarechal1993b}. We will provide a proof for this particular case for the sake of completeness.

\noindent (1) For $x\in X$, in the definition of $\tf(x)$, take $x^{k}=x,\; \alpha^{k}=\frac{1}{d+1}$ for all $k$. By definition, $\tf(x)\leq \sum_{k=1}^{d+1}\alpha^{k}f(x^{k})= f(x)$.
\smallskip

\noindent (2) Suppose that  $((\alpha^{k,n})_{k},(x^{k,n})_{k})_{n\in\NN}$ is a minimizing sequence for $\tf(\tx)$, i.e., \linebreak[4]$\tf (\tx) = \lim_{n\rightarrow \infty}\sum_{k=1}^{d+1}\alpha^{k,n}f(x^{k,n})$, with $((\alpha^{k,n})_{k},(x^{k,n})_{k})_{n\in\NN}$ satisfying the conditions in \eqref{eq:costConv}. Since $(\alpha^{1,n})\in [0,1]$ for all $ n$,  it has a convergent subsequence $\alpha^{1,\phi_1(n)}$, which converges to some $\alpha^1$. Consider sequence $\alpha^{2,\phi_1(n)}$ which has a subsequence $\alpha^{2,\phi_2(n)}$ converging to some $\alpha^2$. Note that $\phi_2(n)$ is a subsequence of $\phi_1(n)$. Repeat this operation $d+1$ times and obtain the subsequences $\phi_{1}(n), \ldots, \phi_{d+1}(n)$ such that $\alpha^{k, \phi_k(n)}$ converges to $\alpha^k$, for $k=1,\ldots, d+1$. Consider $x^{1,\phi_{d+1}(n)}$, which is in the compact set $X$. It has a convergent subsequence $x^{1,\phi_{d+2}(n)}$ converging to $ x^1 \in \X$. Again, take a subsequence $\phi_{d+3}(n)$ such that $x^{2, \phi_{d+3}(n)}$ converges to $x^k$, and so on. Finally, one obtains a subsequence $\phi_{2d+2}(n)$ of $\NN$ such that 
\begin{align}
\tf (\tx) &= \lim_{n\rightarrow \infty}\sum_{k=1}^{d+1}\alpha^{k,\phi_{2d+2}(n)}f(x^{k,\phi_{2d+2}(n)})\, ,\label{1.1}\\
\alpha^{k} &= \lim_{n\rightarrow \infty} \alpha^{k,\phi_{2d+2}(n)}\, ,\;\alpha^{k}\in[0,1]\, , \; k=1,2,\cdots, d+1\, , \label{1.2}\\
\sum_{k=1}^{d+1}\alpha^{k}&=\lim_{n\rightarrow \infty}\sum_{k=1}^{d+1}\alpha^{k,\phi_{2d+2}(n)}=1\, ,\label{1.3}\\
x^{k} &= \lim_{n\rightarrow \infty} x^{k,\phi_{2d+2}(n)}\, ,\; x^{k}\in \X\, , \; k=1,2,\cdots, d+1\, ,\label{1.4}\\
\sum_{k=1}^{d+1}\alpha^{k}x^{k}&=\lim_{n\rightarrow \infty}\sum_{k=1}^{d+1}\alpha^{k,\phi_{2d+2}(n)}x^{k,\phi_{2d+2}(n)}= \lim_{n\rightarrow \infty}\tx =\tx\, .\label{1.5}
\end{align}
Then,
\begin{align*}
\sum_{k=1}^{d+1}\alpha^{k}f(x^{k})&\leq \varliminf_{n \rightarrow \infty}  \sum_{k=1}^{d+1}\alpha^{k}f(x^{k,\phi_{2d+2}(n)})  
= \varliminf_{n \rightarrow \infty}\sum_{k=1}^{d+1}\alpha^{k,\phi_{2d+2}(n)}f(x^{k,\phi_{2d+2}(n)})  \\
&= \tf (\tx)  
\leq \sum_{k=1}^{d+1}\alpha^{k}f(x^{k}) \ . 
\end{align*}
where the first inequality is due to \eqref{1.4}, the second equality is due to \eqref{1.2}, the third equality is due to \eqref{1.1} and the fourth inequality due to \eqref{1.3}, \eqref{1.5} and \eqref{eq:costConv}. This shows that $\tf(\tx)=\sum_{k=1}^{d+1}\alpha^{k}f(x^{k})$, i.e., $(\alpha^{k},x^{k})_{k=1}^{d+1}$, is a minimizer.
\smallskip

\noindent (3) %First show that $\conv\, (\epi\, f_i(\cdot,x_{-i}))\subset  \epi\, \tfi(\cdot,x_{-i}) \subset  \convhull\, (\epi\, f_i(\cdot,x_{-i}))$.
On the one hand, for all $ (x, y)\in \conv\, (\epi\, f) $, by the Caratheodory theorem \cite[Proposition 1.2.1]{Bertsekas2009}, there exist $(x^{k}, y^{k})\in \epi\, f$, $k=1,\ldots, d+1$ such that $(x,y)=\sum_{k=1}^{d+1}\alpha^{k}(x^{k},y^{k})$, with $\alpha \in \mathcal{S}_{d}$. Hence, $y^{k}\geq f(x^{k})$, and $y=\sum_{k=1}^{d+1} \alpha^{k}y^{k}\geq \sum_{k=1}^{d+1}\alpha^{k}f(x^{k})\geq \tf(x)$. This shows that $(x,y)\in  \epi\,\tf$. Therefore, $\conv\, (\epi\, f)\subset \epi\, \tf$. Recall that $f$ is l.s.c.; hence, $\epi\, f$ is a closed set and thus so is $\conv\, (\epi\, f)$. Thus, $\convhull\, (\epi\, f)\subset \epi\, \tf$.

On the other hand, for all $ (x,y)\in \epi\,\tf$, $y\geq \tf(x)$. Let $((\alpha^{k,n})_{k},(x^{k,n})_{k})_{n\in\NN}$ be the minimizing sequence for $\tf(x)$, i.e., $\tf (x) = \lim_{n\rightarrow \infty}\sum_{k=1}^{d+1}\alpha^{k,n}f(x^{k,n})$, with $\alpha^{k,n},x^{k,n}$ satisfying the conditions in \eqref{eq:costConv}. Then, $y = \lim_{n\rightarrow \infty}\sum_{k=1}^{d+1}\alpha^{k,n}(f(x^{k,n})+ \frac{\delta}{d+1})$, where $\delta=y-\tf(x)\geq 0$. Denote $y^{n}=\sum_{k=1}^{d+1}\alpha^{k,n}(f(x^{k,n})+ \frac{\delta}{d+1})$. Then, $(x,y^{n})\in \conv (\epi\, f)$, and $\lim_{n\rightarrow \infty}(x,y^{n})=(x,y)$. This means that  $(x,y)\in \convhull(\epi\,f)$ and, therefore, $ \epi\, \tf \subset  \convhull\, (\epi\, f)$.

In conclusion, $\epi\, \tf(\cdot) = \convhull\, (\epi\, f(\cdot))$, which implies that the epigraph of $\tf$ is closed and convex. Thus, $\tf$ is l.s.c. and convex on $\tX$.
\smallskip

\noindent (4) By the lower semicontinuity of $\tf$ and $f$ on the compact sets $\tX$ and $X$, their minima can be attained. The equality \eqref{eq:minmin} is thus clear by the definition in \eqref{eq:costConv}.
\end{proof}

\begin{rem}
 If $f_i(\cdot,x_{-i})$ is not l.s.c, the inclusion relationship in Lemma \ref{lm1}(2) can be strict, as shown, respectively, by the following two examples of dimension 1.

\begin{itemize}
	\item $\X=\{0\}\cup\{\pm\frac{1}{z}\}_{z\in \NN^{*}}$, $f(x)=|x|$ for $x\in \X\setminus \{0\}$, and $f(0)=1$. Then, $\tf(x)=|x|$, for all $x \in \tX = [-1,1]$, and $\conv\, (\epi\, f)\subsetneq \epi\, \tf$.
	
	\item $\X=[0,1]$, $f(x)=0$ for $x \neq 0$, and $f(0)=1$. Then, $\tf(x)=f(x)$ for all $x\in[0,1]$, and $\epi\, \tf\subsetneq \convhull\, (\epi\, f)$.
\end{itemize}
\end{rem}
\medskip

\begin{lem}\label{lm:analyseconvexification}
Under Assumption \ref{ass:lsc}, for any profile $\tx\in \tX$, for any player $i$, 
%such that either $\txi \notin X_i$ or $\txi\in X_i$ but $\tfi(\txi, \tx_{-i}) < f_i(\txi,\tx_{-i})$
 for all $x_i\in Z(i,\tx)$,
\begin{enumerate}[(1)]
\item $f_i(x_i, \tx_{-i})=\tfi(x_i, \tx_{-i})$;
\item for any $h\in \partial_i \tfi(\txi, \tx_{-i})$,
\begin{equation}\label{eq:h}
f_i(x_i, \tx_{-i})=\tfi(x_i, \tx_{-i}) = \tfi(\txi, \tx_{-i}) + \langle h, x_i - \txi \rangle\, .
\end{equation}
\end{enumerate}
\end{lem}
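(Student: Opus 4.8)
The plan is to exploit the subgradient inequality for the convex, l.s.c.\ function $\tf = \tfi(\cdot,\tx_{-i})$ together with the defining optimality of the generator. Throughout I keep the convention of the preceding lemmata (the index $i$ and the profile $\tx_{-i}$ are fixed and suppressed) and write $f = f_i(\cdot,\tx_{-i})$, $\tf = \tfi(\cdot,\tx_{-i})$, $Z = Z(i,\tx)$. By Lemma \ref{lm1}(2) the infimum in \eqref{eq:costConv} is attained, so there are points $z^1,\dots,z^{d+1}\in Z$ and coefficients $(\alpha^k)_{k=1}^{d+1}\in\S_d$ with $\tx=\sum_k \alpha^k z^k$ and $\tf(\tx)=\sum_k \alpha^k f(z^k)$. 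I may restrict to the generator points carrying positive weight, since those with $\alpha^k=0$ enter neither the convex combination nor the cost sum; this is the natural reading and it is exactly the set of genuinely active generator points.

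First I would fix an arbitrary $h\in\partial\tf(\tx)$, which is nonempty by the subdifferentiability of $\tf$ on $\tXi$ recorded in the discussion following Lemma \ref{lm1}. The subgradient inequality gives, for each $k$, $\tf(z^k)\geq \tf(\tx)+\langle h, z^k-\tx\rangle$, while Lemma \ref{lm1}(1) gives $f(z^k)\geq \tf(z^k)$. Chaining these yields the pointwise lower bound $f(z^k)\geq \tf(\tx)+\langle h, z^k-\tx\rangle$ valid for every active $k$.

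The crux is then a term-by-term equality argument. Multiplying this last inequality by $\alpha^k\geq 0$ and summing over $k$, the left-hand side collapses to $\sum_k \alpha^k f(z^k)=\tf(\tx)$, while the right-hand side collapses, using $\sum_k\alpha^k=1$ and $\sum_k\alpha^k z^k=\tx$, to $\tf(\tx)+\langle h,\tx-\tx\rangle=\tf(\tx)$. Hence the summed inequality is in fact an equality. Since each summand $\alpha^k\big(f(z^k)-\tf(\tx)-\langle h, z^k-\tx\rangle\big)$ is nonnegative and they sum to zero, every summand vanishes; as $\alpha^k>0$ this forces $f(z^k)=\tf(\tx)+\langle h, z^k-\tx\rangle$ for each $k$. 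Feeding this back into the chain $\tf(\tx)+\langle h, z^k-\tx\rangle\leq \tf(z^k)\leq f(z^k)$ shows both middle inequalities are tight, giving simultaneously $f(z^k)=\tf(z^k)$ (claim (1)) and $\tf(z^k)=\tf(\tx)+\langle h, z^k-\tx\rangle$ (claim (2)).

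The only delicate point is the passage from a single summed equality to the individual equalities, which is precisely where the nonnegativity of each summand (guaranteed by combining the subgradient bound with Lemma \ref{lm1}(1)) and the strict positivity of the weights are used; this is also why the restriction to active generator points is needed, since a zero-weight point can carry $f(z^k)>\tf(z^k)$ without affecting the minimizing combination. Everything else is a direct application of convexity through the subgradient inequality and of the basic facts collected in Lemma \ref{lm1}, so I do not expect any genuine obstacle beyond setting up the two one-sided bounds so that the averaging argument pins down equality.
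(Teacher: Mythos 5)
Your proof is correct and, for part (2), is essentially the paper's own argument: average the subgradient inequality over the generator with the weights $\alpha^k$, observe that the averaged inequality is tight because $\sum_k\alpha^k f(z^k)=\tf(\tx)$, and conclude that each nonnegative weighted summand must vanish. Where you genuinely differ is part (1): the paper proves $f(z^k)=\tf(z^k)$ by a separate, purely definitional argument (if $f(z^k)>\tf(z^k)$, one refines the convex combination defining $\tx$ by substituting a strictly better decomposition of $z^k$, contradicting the optimality of the generator), and only then uses that identity when evaluating $\sum_k\alpha^k\tf(z^k)$ in the proof of (2). You instead fold (1) into the same averaging step by chaining $f(z^k)\geq\tf(z^k)\geq\tf(\tx)+\langle h,z^k-\txi\rangle$, which is more economical but makes claim (1) depend on the existence of at least one subgradient $h$ at $\txi$; the paper asserts that $\partial_i\tfi(\txi,\tx_{-i})$ is nonempty for every $\txi\in\tXi$, so your reliance on this is consistent with its framework, but a convex l.s.c.\ function can in general fail to be subdifferentiable at relative boundary points of its domain, and the paper's route for (1) is immune to that subtlety. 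Your explicit restriction to generator points of positive weight is sensible and correctly motivated; note that the paper's contradiction argument for (1) also tacitly requires $\alpha^k>0$ (the strict inequality it derives degenerates to an equality when $\alpha^k=0$), so you are, if anything, more careful than the source on that point.
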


\begin{proof}[Proof of Lemma \ref{lm:analyseconvexification}] 
Let $\{x^1, \ldots, x^{d+1}\} \subset \X$ be a generator of $(\tx, \tf(\tx))$ and let $\alpha\in \S_d$ be their corresponding weights. 
\smallskip

\noindent (1) Suppose that there is a $k$ such that $f(x^k)>\tf(x^k)$. Then, there exists $(y^l)_l$ in $\X$ and $\beta\in \S_d$ such that $x^k=\sum_l \beta^l y^l$ and $\tf(x^k)=\sum_l \beta^l f(y^l) < f(x^k)$. In consequence, $\tf(\tx)=\sum_m \alpha^m f(x^m) > \sum_{m \neq k}\alpha^m f(x^m) + \sum_{l} \alpha^k \beta^l f(y^l)$, while $\sum_{m \neq k}\alpha^m x^m + \sum_{l} \alpha^k \beta^l y^l=\tx$ and $\sum_{m \neq k}\alpha^m + \sum_{l} \alpha^k \beta^l=1$, contradicting the definition of $\tf(\tx)$.
\smallskip

\noindent (2) By the definition of the subdifferential, one has
\begin{equation}\label{eq:subgrad}
\tf(x^k)  \geq  \tf(\tx) + \langle h, x^k - \tx \rangle \ , \quad \forall k =1, \ldots, d+1 \ .
\end{equation}
Multiplying \eqref{eq:subgrad} by $\alpha^k$ for each $k$ and adding the $d+1$ inequalities yield
\begin{equation}\label{eq:repeat}
\sum_{k=1}^{d+1} \tf(x^k) \geq \tf(\tx) + \langle h, \sum_k \alpha^k x^k - \tx \rangle \quad \Leftrightarrow \quad \tf(\tx) \geq \tf(\tx) \ .
\end{equation}
If, for at least one $k$, the inequality in \eqref{eq:subgrad} is strict, then the inequalities in \eqref{eq:repeat} are strict as well, which is absurd. Therefore, for each $k$, $\tf(x^k) = \tf(\tx) + \langle h, x^k - \tx \rangle$.
\end{proof}

\begin{proof}[Proof of Lemma \ref{lem:approxeq}] 
First note that $\tx$ is in $\ri( \conv \, Z(\tx))$, the relative interior of $\conv\, Z(\tx)$. Hence, for $t>0$ small enough, $\tx\pm t(x - \tx)$ is in $\ri( \conv\, Z(\tx))\subset \tX$. By \eqref{eq:firstorder}, $\bigl\langle  h, \tx\pm t(x - \tx) - \tx \bigr\rangle  \geq - \eta \|\tx \pm t(x - \tx) -\tx \|$, which yields $\big|\bigl\langle  h, x - \tx  \bigr\rangle \big| \leq \eta \|x - \tx\|$.  Then, by Lemma \ref{lm:analyseconvexification}, $|\tf(x) - \tf(\tx)| =  |\langle h, x - \bx  \rangle| \leq \eta\|x -\tx \|$.
\end{proof}

\begin{proof}[Proof of Theorem \ref{thm:main}]
For each $i\in N$, define a set 
%\begin{equation*}%\label{eq:Ei}
$E_i(\tx) :=  A_i Z(i,\tx)$ in $\RR^q$.
%\end{equation*}
Since $\txi \in \conv\, (Z(i,\tx))$, one has $\sum_{i\in N} A_i \tx_i \in \sum_{i\in N}\conv (E_i(\tx))= \conv \Big(\sum_{i\in N} E_i(\tx)\Big)$ by the linearity of the $A_i$'s. 
According to the Shapley-Folkman Lemma, there exists $e_i \in \conv(E_i(\tx))$ for each $i\in N$, and a subset $I\subset N$ with $\vert I\vert \leq q$, such that (i) $\sum_{i\in N}A_i \tx_i =\sum_{i\in N} e_i$, and (ii) $e_i\in E_i(\tx)$ for all $i\notin I$. Thus, for all $i\notin I$, there exists $\bxi\in Z(i,\tx)$, such that $e_i = A_i \bxi$. 
For all $i\in I$, take arbitrarily $\bxi \in Z(i,\tx)$. Then,
\begin{align}
\Big\|\sum_{i\in N} A_i \tx_i-\sum_{i\in N} A_i x^{*}_i\Big \|& \leq \Big\|\sum_{i\in N} A_i \tx_i-\sum_{i\in N} A_i \bxi \Big \| =  \Big\|\sum_{i\in N}e_i-\sum_{i\in N} A_i \bxi \Big \|=  \Big\|\sum_{i\in I} A_i (\tx_i-\bxi) \Big \| \notag\\
&  \leq \sqrt{q} M \Delta \ . \label{eq:SF ineq}
\end{align}

%\textcolor{red}{According to the Shapley-Folkman lemma \ref{lm:SF}, there exists $e_i = A_i \bx_i \in E_i(\tx)$ for each $i\in N$, such that:
%\begin{equation*}
%    \Big\|\sum_{i\in N} A_i \tx_i-\sum_{i\in N} A_i x^{*}_i\Big \| \leq \Big\|\sum_{i\in N} A_i \tx_i-\sum_{i\in N} A_i \bxi \Big \|\leq \sqrt{q} M \Delta.
%\end{equation*}}

Now, for all $i$, $x_i^*\in  Z(i,\tx)$, so that it satisfies 
\begin{equation}
\label{eq:xstarmin}
f_i(x_i^*,\tx_{-i}) \leq \tfi(\txi,\tx_{-i}) \leq  \tfi(x_i,\tx_{-i}) + \epsilon+\eta \leq f_i(x_i,\tx_{-i}) + \epsilon+\eta \, ,\quad\textrm{for all}\ x_i\in \X_i\, ,
\end{equation}
according to Lemma \ref{lm:analyseconvexification}.(1), Lemma \ref{lem:approxeq} and Lemma \ref{lm1}.(1).

Recall that $f_i(x)=\theta_i (x_i,\, \frac{1}{n}\sum_{j\in N} A_j \, x_j )$. Hence, for any $x_i\in \X_i$ 
\begin{align*}
f_i(x_i,\, \tilde x_{-i}) 
& =  \theta_i \bigg(x_i,\, \frac{1}{n} A_i\, x_i+ \frac{1}{n}\sum_{j\in N_{-i}} A_j \, \tx_j \bigg) 
% =\theta_i \bigg(x_i, \, \frac{1}{n} A_i\, (x_i-\tx_i)+ \frac{1}{n}\sum_{j\in N} A_j\,  \tx_j \bigg) 
\\
%& = \theta_i \bigg(x_i, \,\frac{1}{n} A_i\, (x_i-\tx_i)+ \frac{1}{n}\sum_{j\in N} A_j\,  \bar x_j \bigg)\\
%& = \theta_i \bigg(x_i, \, \frac{1}{n} A_i\, (x_i-\tx_i)+ \frac{1}{n}\sum_{j\in N} A_j \, x^*_j+\frac{1}{n}\sum_{j\in N}A_j\, (\tx_j-x^*_j) \bigg)\\
& = \theta_i \bigg(x_i, \, \frac{1}{n} A_i\, x_i+ \frac{1}{n}\sum_{j\in N_{-i}} A_j\,  x^*_j+\frac{1}{n} A_i\, (x^*_i-\tx_i)+\frac{1}{n}\sum_{j\in N}A_j\, (\tx_j-x^*_j) \bigg)\\
%& =  \theta_i\bigg (x_i,\frac{1}{n}\sum_{j\in N_{-i}} A_j \, x^*_j+\frac{1}{n}A_i\, x_i +\frac{1}{n} A_i\, (x_i^*-\tilde x_i)+\frac{1}{n}\sum_{j\in I}A_j\, (\bar x_j-x^*_j)\bigg )\\
& = \theta_i \bigg(x_i, \,\frac{1}{n} A_i \, x_i + \frac{1}{n}\sum_{j\in N_{-i}} A_j \, x^*_j+\delta_i \bigg) - \theta_i \bigg(x_i, \,\frac{1}{n} A_i \, x_i + \frac{1}{n}\sum_{j\in N_{-i}} A_j \, x^*_j \bigg) + f_i(x_i, x^*_{-i})\ ,
\end{align*}
where $\delta_i:= \frac{1}{n} A_i\, (x_i^*-\tilde x_i)+\frac{1}{n}\sum_{j\in N}A_j\,(\tx_j-x^*_j)$. 

By \eqref{eq:SF ineq}, $\Vert \delta_i\Vert \leq \frac{(\sqrt{q}+1) M \Delta}{n}$. Using Assumption~\ref{ass:lsc}, we can show that, for any $x_i\in \X_i$,
\begin{equation*}
\left\vert f_i(x_i,x^*_{-i})-f_i(x_i,\tilde x_{-i}) \right\vert \leq H \left(\frac{(\sqrt{q}+1)M\Delta}{n}\right)^\gamma \ .
\end{equation*}
%\textcolor{red}{\begin{equation*}
%\left\vert f_i(x_i,x^*_{-i})-f_i(x_i,\tilde x_{-i}) \right\vert \leq H \left(\frac{(\sqrt{q}+1)M\Delta}{n}\right)^\gamma \ .
%\end{equation*} }
Injecting this result into~\eqref{eq:xstarmin} yields 
\begin{equation}\label{eq:error}
f_i(x_i^*, x^*_{-i})\leq  f_i(x_i,x^*_{-i})+ \epsilon+\eta + 2 H \left(\frac{(\sqrt{q}+1)M\Delta}{n}\right)^\gamma \, ,\quad \forall x_i\in \X_i\, ,\; \forall i\in N\, .
\end{equation} %\cqfd
%\textcolor{red}{\begin{equation*}
%f_i(x_i^*, x^*_{-i})\leq  f_i(x_i,x^*_{-i})+ \epsilon+\eta + 2 H \left(\frac{(\sqrt{q}+1)M\Delta}{n}\right)^\gamma \, ,\quad \forall x_i\in \X_i\, ,\; \forall i\in N\, .
%\end{equation*}}
\end{proof}

\begin{lem}\label{lm:decomposeSF}
Under Assumption \ref{ass:lsc}, suppose that $\tx\in \tX$ is an $\epsilon$-PNE in $\tGamma$ satisfying the $\eta$-stability condition with respect to $(Z(i,\tx))_i$, where $Z(i,\tx) = \{x_i^1, x_i^2,\ldots, x_i^{l_i} \}$ with $1\leq l_i\leq d+1$ and $\tx_i = \sum_{l=1}^{l_i} \alpha_i^l x_i^l$, where $\alpha \in \S_{l_i-1}$. Each player $i$ plays a mixed strategy independently, i.e., a random action $X_i$ following the distribution $\tilde{\mu}_i$ over $\X_i$ defined by $\mathbb{P}(X_i = x_i^{l}) = \alpha_i^l$. In other words,
\begin{equation}\label{eq:tildemu}
\tilde{\mu}_i = \sum _{l=1}^{l_i} \alpha_i^l \delta_{x_i^l}\ ,
\end{equation}
where $\delta_{x_i^l}$ stands for the Dirac distribution on $x_i^l$. Then,
%If, for each player $i$, independent random variable $X_i$ follows distribution $\tilde{\mu}_i$, then 
\begin{equation*}
\mathbb{E}\Big\|\sum_{i\in N} A_i \tx_i-\sum_{i\in N} A_i X_i\Big \| \leq  \sqrt{n} M\Delta\, .
\end{equation*}
\end{lem}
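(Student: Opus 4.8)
The plan is to exploit the fact that the random vectors $A_i X_i$ are unbiased estimators of $A_i \tx_i$, and to control the expected norm of their centred sum by an elementary second-moment (variance) computation. First I would observe that, by the definition of $\tilde\mu_i$ in \eqref{eq:tildemu} together with the barycentric representation $\tx_i = \sum_{l=1}^{l_i} \alpha_i^l x_i^l$, we have $\mathbb{E}[X_i] = \sum_{l=1}^{l_i}\alpha_i^l x_i^l = \tx_i$, and hence $\mathbb{E}[A_i X_i] = A_i \tx_i$ by linearity. Setting $Y_i := A_i\tx_i - A_i X_i = A_i(\tx_i - X_i)$, the $Y_i$ are therefore independent, mean-zero random vectors in $\RR^q$, and the quantity to be bounded is exactly $\mathbb{E}\big\|\sum_{i\in N} Y_i\big\|$.

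Next, since $t\mapsto \sqrt t$ is concave, Jensen's inequality gives $\mathbb{E}\big\|\sum_{i\in N} Y_i\big\| \leq \big(\mathbb{E}\big\|\sum_{i\in N} Y_i\big\|^2\big)^{1/2}$, so it suffices to bound the second moment. Expanding the squared norm and using independence, the cross terms vanish: for $i\neq j$, independence of $X_i$ and $X_j$ together with $\mathbb{E}[Y_i]=\mathbb{E}[Y_j]=0$ yields $\mathbb{E}[\langle Y_i, Y_j\rangle] = \langle \mathbb{E}[Y_i], \mathbb{E}[Y_j]\rangle = 0$. Hence $\mathbb{E}\big\|\sum_{i\in N} Y_i\big\|^2 = \sum_{i\in N} \mathbb{E}\|Y_i\|^2$.

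Finally I would bound each summand. From $\|A_i\|_2 \leq M$ we get $\mathbb{E}\|Y_i\|^2 \leq M^2\, \mathbb{E}\|\tx_i - X_i\|^2$; and since both $\tx_i$ and $X_i$ lie in $\tXi = \conv(\X_i)$, whose diameter equals $|\X_i| \leq \Delta$ (taking the convex hull does not increase the diameter in a Euclidean space), we have $\|\tx_i - X_i\| \leq \Delta$ almost surely, so $\mathbb{E}\|Y_i\|^2 \leq M^2\Delta^2$. Summing over the $n$ players gives $\mathbb{E}\big\|\sum_{i\in N} Y_i\big\|^2 \leq n\, M^2\Delta^2$, whence $\mathbb{E}\big\|\sum_{i\in N} A_i\tx_i - \sum_{i\in N} A_i X_i\big\| \leq \sqrt n\, M\Delta$, as claimed.

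I do not expect a genuine obstacle here: the only points requiring care are the vanishing of the cross terms (which rests on independence combined with the unbiasedness $\mathbb{E}[A_iX_i]=A_i\tx_i$, itself a direct consequence of the barycentric coefficients defining $\tilde\mu_i$) and the elementary fact that the convex hull preserves the diameter, yielding the deterministic bound $\|\tx_i-X_i\|\leq\Delta$. Notably, the $\epsilon$-equilibrium and $\eta$-stability hypotheses play no role in this particular estimate; they are only invoked afterwards, when this $L^1$ bound on the aggregate deviation is fed into the equilibrium comparison to control the players' costs.
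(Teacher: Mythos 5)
Your proof is correct and follows essentially the same route as the paper's: unbiasedness $\mathbb{E}[A_iX_i]=A_i\tx_i$, Jensen's inequality to pass to the second moment, independence to cancel the cross terms, and the bound $\mathbb{V}\mathrm{ar}(A_iX_i)\leq M^2\Delta^2$ from $\|A_i\|_2\leq M$ and the diameter of $\tXi$. The only difference is that you spell out the diameter-preservation of the convex hull and the irrelevance of the equilibrium hypotheses, which the paper leaves implicit.
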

\begin{proof}
By the independence of $X_i$, the $A_i X_i$ are independent of each other. From the definition of $\tilde{\mu}_i $,  $\mathbb{E}( A_i X_i ) = A_i \tx_i$. Therefore, 
\begin{equation*}
\left(\mathbb{E}\Big \|\sum_{i\in N} A_i \tx_i-\sum_{i\in N} A_i X_i\Big \|\right)^2
\leq	 \mathbb{E}\left[ \Big\|\sum_{i\in N} A_i \tx_i-\sum_{i\in N} A_i X_i\Big \|^2 \right]
= \sum_{i\in N} \mathbb{V}\mathrm{ar}(A_i X_i)\leq n M^2 \Delta^2\, ,
\end{equation*}
where the first inequality is due to Jensen's inequality.
\end{proof}

\begin{proof}[Proof of Proposition \ref{prop:mixed}]
By the same arguments used in the proof of Theorem \ref{thm:main}, one has
\begin{equation*}
| f_i(x_i,\, \tx_{-i}) - f_i(x_i, X_{-j}) |\leq H\, \|\delta_i(X)\|^{\gamma}\, ,%\theta_i \bigg(x_i, \,\frac{1}{n} A_i \, x_i + \frac{1}{n}\sum_{j\in N_{-i}} A_j \, X_j+\delta_i \bigg)
\end{equation*}
where $\delta_i(X):= \frac{1}{n} A_i\, (X_i - \tx_i)+\frac{1}{n}\sum_{j\in N}A_j\, (\tx_j - X_j)$. By Lemma \ref{lm:decomposeSF},
\begin{equation*}
\mathbb{E} \|\delta_i(X)\| \leq \frac{1+\sqrt{n}}{n} M \Delta\ .
\end{equation*}

Besides, since $X_i$ takes values in $Z(i,\tx)$,
\begin{align*}
f_i(X_i, X_{-i}) & = f_i(X_i, X_{-i}) - f_i(X_i, \tx_{-i}) + f_i(X_i, \tx_{-i}) \\
& \leq f_i(X_i, X_{-i}) - f_i(X_i, \tx_{-i}) + f_i(x_i, \tx_{-i}) + \epsilon + \eta \\
& = f_i(X_i, X_{-i}) - f_i(X_i, \tx_{-i}) + f_i(x_i, \tx_{-i})- f_i(x_i, X_{-i})+ f_i(x_i, X_{-i})  + \epsilon + \eta \, ,
\end{align*}
so that 
\begin{align*}
 f_i(X_i, X_{-i}) -f_i(x_i, X_{-i}) & \leq  |f_i(X_i, X_{-i}) - f_i(X_i, \tx_{-i})|+ |f_i(x_i, \tx_{-i})- f_i(x_i, X_{-i})|+ \epsilon+\eta \\
& \leq 2 H(\delta_i(X))^{\gamma} + \epsilon + \eta\, .
\end{align*} %\cqfd

Therefore,
\begin{equation*}
\mathbb{E}\big[ f_i(X_i, X_{-i}) - f_i(x_i,X_{-i}) \big] \leq 2 H \,\mathbb{E}\big[\|\delta_i(X)\|^{\gamma}\big] + \epsilon + \eta \leq  2 H \left(\frac{(\sqrt{n}+1)M\Delta}{n}\right)^\gamma + \epsilon+\eta\ .
\end{equation*}
\end{proof}

\begin{proof}[Proof of Lemma \ref{lm: approx game}] 
(1) First show that, for any fixed $x_i\in \X_i$, the function $\theta_i(x_i, y) := \big\langle g (y+\frac{a_i}{n} (x^0_i-x_i)), x_i \big\rangle  + \ell_i(x_i)$ is $L_g\Delta$-Lipschitz in $y$ on $\Omega$. For this, fix $x_i\in \X_i$. For any $y$ and $y'$ in $\Omega$,
 	\begin{equation*}%\label{eq:lip:theta}
 	\begin{split}
 	\vert \theta_i(x_i,y')-\theta_i(x_i,y)\vert ^2
    &=\Big\vert \Big\langle g \Bigl( y'+\frac{a_i}{n} (x^0_i-x_i)\Bigr) -  g \Bigl( y+\frac{a_i}{n} (x^0_i-x_i)\Bigr), x_i \Big\rangle\Big\vert ^2\\
 	&\leq \Big \| g \Bigl( y'+\frac{a_i}{n} (x^0_i-x_i)\Bigr) -  g \Bigl( y +\frac{a_i}{n} (x^0_i-x_i)\Bigr) \Big\|^2 \Delta^2\\
 	& = \sum_{t=1}^{d}\Bigl( g_t \Bigl( y'_t +\frac{a_i}{n} (x^0_{i,t}-x_{i,t})\Bigr) -  g_t \Bigl( y_t +\frac{a_i}{n} (x^0_{i,t}-x_{i,t})\Bigr) \Bigr)^2 \Delta^2\\
 	&\leq L_g^2 \Delta^2 \sum_{t=1}^{d}(y'_t-y_t)^2 \\
 	&= L_g^2 \Delta^2 \|y'-y\|^2\, ,
 	\end{split}
 	\end{equation*}
 	where the first inequality results from the Cauchy-Schwarz inequality, while the second inequality is true because $g_t$ is $L_{g_t}$-Lipschitz.

(2) It is easy to see that $|f_i(x_i , x_{-i}) - h_i(\frac{1}{n} \sum_{j\in N} a_j  x_j) - \bfi(x_i , x_{-i})| \leq \frac{L_g M\Delta^2}{n}$ for all $x_i\in \X_i$ and all $x_{-i}\in \tX_{-i}$. Hence, if $\bx\in X$ is an $\epsilon$-PNE of $\bGamma$, then, for each $i$, for any $x_i\in \X_i$, 
\begin{equation*}
	\begin{split}
	f_i(\bxi, \bx_{-i})\leq& \bfi(\bxi, \bx_{-i}) + h_i \Big(\frac{1}{n} \sum_{j\in N} a_j  \bx_j \Big)+ \frac{L_g M\Delta^2}{n} \\
	\leq &\bfi(x_i, \bx_{-i})+\epsilon +  h_i \Big(\frac{1}{n} a_i x_i + \frac{1}{n} \sum_{j\neq i}^n a_j  \bx_j \Big) + \frac{L_h M \Delta}{n} +\frac{L_g M\Delta^2}{n}\\
	 \leq &f_i(x_i, \bx_{-i})+\epsilon + \frac{L_h M \Delta}{n}+\frac{2 L_g M\Delta^2}{n}\, ,
	\end{split}
\end{equation*}
where the second inequality is due to the definition of $\epsilon$-PNE and the Lipschitz continuity of $h_i$.
\end{proof}

\begin{lem} \label{lm:convergence}
Under Assumption \ref{ass:example}, let $(x^{k})_{k\in \NN}$ be the sequence generated by Algorithm \ref{algo:BCP} with some initial point $x^0\in \tX$. Then,
\begin{enumerate}[(1)]
\item $\sum_{k=1}^{\infty}\|x^{k-1}-x^{k}\|^2\leq \frac{2n^2}{m^2 L_g} C $, where $C=(d\Delta L_g + 2 B_r) M$;
%\begin{equation}\label{lm:square sum1}
%\sum_{k=1}^{\infty}\|x^{k-1}-x^{k}\|^2\leq \frac{2n^2}{m^2 L_g} C \ .
%\end{equation}
\item for any $K \in \NN^*$,  there exists some $k^{*}\leq K$, such that $\|x^{k^{*}-1}-x^{k^{*}}\| \leq \frac{\sqrt{2C} \, n }{m \sqrt{L_g K}}$.
%\begin{equation*}
%\|x^{k^{*}-1}-x^{k^{*}}\| \leq \frac{\sqrt{2C} \, n }{m \sqrt{L_g K}}\ .
%\end{equation*}
\end{enumerate}
\end{lem}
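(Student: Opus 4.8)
The plan is to recognize Algorithm~\ref{algo:BCP} as a block‑coordinate gradient‑proximal (majorization–minimization) scheme for the \emph{potential function}
$$ P(x) \;:=\; n\,\Phi\!\Big(\tfrac1n\textstyle\sum_{j\in N}a_j x_j\Big) \;+\; \sum_{i\in N} a_i\,\tri(x_i), \qquad \Phi(s):=\sum_{t=1}^d\int_0^{s_t} g_t(u)\,du, $$
and then to run the standard ``sufficient decrease plus telescoping'' argument. First I would record the properties of $\Phi$: since each $g_t$ is nondecreasing and $L_{g_t}$‑Lipschitz with $L_{g_t}\le L_g$, the separable $\Phi$ is convex with gradient $\nabla\Phi=g$ that is $L_g$‑Lipschitz, because $\|g(s)-g(s')\|^2=\sum_t(g_t(s_t)-g_t(s_t'))^2\le L_g^2\|s-s'\|^2$. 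Hence the smooth part $\psi(x):=n\Phi(\tfrac1n\sum_j a_j x_j)$ has, in block $x_i$, gradient $\nabla_{x_i}\psi=a_i\,g(\tfrac1n\sum_j a_jx_j)$ and block‑Lipschitz modulus $a_i^2 L_g/n$. The key sanity check is that, after dividing by $a_i>0$, the prox‑gradient step on $P$ in block $i$ with step matched to this modulus is exactly~\eqref{alg:proximal1}; this is what forces the weight $a_i$ in front of $\tri$ inside $P$.

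Next I would establish a per‑block decrease. Fix the outer index $k$ and block $i$, write $\delta:=x_i^k-x_i^{k-1}$ and $g_i:=g(\tfrac1n\sum_{j<i}a_jx_j^k+\tfrac1n\sum_{j\ge i}a_jx_j^{k-1})$. The subproblem objective $Q_i$ in~\eqref{alg:proximal1} is $\tfrac{a_iL_g}{n}$‑strongly convex, so optimality of $x_i^k$ over the convex set $\tXi$ gives $Q_i(x_i^{k-1})\ge Q_i(x_i^k)+\tfrac{a_iL_g}{2n}\|\delta\|^2$, which, since $Q_i(x_i^{k-1})=\tri(x_i^{k-1})$, rearranges to
$$ \langle g_i,\delta\rangle+\tri(x_i^k)-\tri(x_i^{k-1})\;\le\;-\tfrac{a_iL_g}{n}\|\delta\|^2. $$
Updating block $i$ changes the aggregate by $\tfrac{a_i}{n}\delta$, so the descent lemma for $\Phi$ yields $\psi(\text{after})-\psi(\text{before})\le a_i\langle g_i,\delta\rangle+\tfrac{a_i^2L_g}{2n}\|\delta\|^2$. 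Adding $a_i[\tri(x_i^k)-\tri(x_i^{k-1})]$ and inserting the strong‑convexity inequality (multiplied by $a_i$) gives the block decrease $\Delta P_i\le-\tfrac{a_i^2L_g}{2n}\|\delta\|^2\le-\tfrac{m^2L_g}{2n}\|\delta\|^2$, using $a_i\ge m$. Summing the telescoping per‑block decreases over $i=1,\dots,n$ within iteration $k$ yields $P(x^{k-1})-P(x^k)\ge\tfrac{m^2L_g}{2n}\|x^{k-1}-x^k\|^2$.

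Finally I would sum over $k$ and bound the total drop of $P$. Telescoping gives $\tfrac{m^2L_g}{2n}\sum_{k=1}^{K}\|x^{k-1}-x^k\|^2\le P(x^0)-P(x^K)\le P(x^0)-\inf P$. The oscillation of $P$ on the compact set $\tX$ splits into the aggregate part $n[\Phi(\cdot)-\inf\Phi]$, controlled by the Lipschitz/boundedness of $g$ together with the diameter $\Delta$ and bounded by $nd\Delta L_g M$, and the local part $\sum_i a_i[\tri(x_i^0)-\inf\tri]$, controlled by $|\tri|\le B_r$ and $a_i\le M$ and bounded by $2nMB_r$; together $P(x^0)-\inf P\le nC$ with $C=(d\Delta L_g+2B_r)M$. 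Letting $K\to\infty$ proves~(1). Part~(2) is then immediate by pigeonhole: among the $K$ nonnegative terms whose sum is at most $\tfrac{2n^2}{m^2L_g}C$, the smallest is at most $\tfrac1K$ of the total, so some $k^*\le K$ satisfies $\|x^{k^*-1}-x^{k^*}\|\le\tfrac{\sqrt{2C}\,n}{m\sqrt{L_gK}}$.

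I expect the main obstacle to be pinning down the correct potential, in particular the $a_i$‑weighting of the proximal terms $\tri$ and the accompanying rescaling that reconciles the linear coefficient $g$ appearing in~\eqref{alg:proximal1} (rather than $a_ig$) with the block gradient $a_ig$ of $\psi$. Once $P$ is correctly identified, the descent estimate is a routine combination of strong convexity of the prox subproblem and the descent lemma for $\Phi$; the remaining care lies in the quantitative oscillation bound on $P(x^0)-\inf P$ needed for the explicit constant $C$, and in handling the nonsmooth convexified $\tri$ through the proximal (rather than gradient) term.
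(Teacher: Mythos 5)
Your proof is correct and follows essentially the same route as the paper: the paper uses the potential $G(x)=\sum_t G_t(\tfrac1n\sum_j a_j x_{j,t})+\sum_j \tfrac{a_j}{n}\tilde r_j(x_j)$ (your $P$ is just $nG$), obtains the per-iteration sufficient decrease $\sum_i \tfrac{a_i^2L_g}{2n^2}\|x_i^{k}-x_i^{k+1}\|^2\le G(x^k)-G(x^{k+1})$ by citing Lemma~2.2 of Xu--Yin \cite{XuandYin2013}, and then telescopes and bounds the oscillation of $G$ exactly as you do. The only difference is that you re-derive that sufficient-decrease inequality from the strong convexity of the prox subproblem plus the descent lemma rather than citing it, which makes the argument self-contained; you even reproduce the paper's own small mismatch between the bound $dM\Delta B_g$ on the aggregate part and the constant $C=(d\Delta L_g+2B_r)M$, which uses $L_g$ in place of $B_g$.
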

\begin{proof}[Proof of Lemma \ref{lm:convergence}]
Consider the following two real-valued functions defined on $\tX$:
\begin{equation}\label{eq:sanpotential}
G_0(x) := \sum_{t=1}^{d} G_{t} \Bigl(\frac{1}{n} \sum_{j\in N} a_j x_{j,t} \Bigr) \, , \quad G(x) :=  G_0(x) + \sum_{j\in N} \frac{a_j}{n} \tr_j(x_j) \ ,
\end{equation}
where $G_{t}$ is a primitive function of $g_t$, which exists thanks to Assumption \ref{ass:example}.

Note that the function $G_0$ is convex and differentiable on a neighborhood of $\tX$, and the convex function $\tr_j$ is uniformly bounded on $\tX_j$ for all $j\in N$ with the same bound $B_\ell$, according to Assumption \ref{ass:example}. 

Besides, it is easy to see that, for any $i$ and fixed $x_{-i}\in \tX_{-i}$, $\nabla_i G_0(x_i,x_{-i}):=\frac{\partial G_0(x_i, x_{-i})}{\partial x_i}=\frac{a_i}{n}g (\frac{1}{n}a_i x_i + \frac{1}{n}\sum_{j\neq i} a_j x_j)$ is $\frac{a_i^2L_g}{n^2}$-Lipschitz continuous on $\tXi$. 

Therefore, Assumptions 1 and 2 in \cite{XuandYin2013} are verified. One can thus apply Lemma 2.2 from \cite{XuandYin2013} and obtain
\begin{equation*}
\sum_{i\in N} \frac{a_i^2 L_g}{2n^2}\|x^{k}_i-x^{k+1}_i\|^2\leq  G(x^{k})-G(x^{k+1})\, ,
\end{equation*}
so that
\begin{equation*}
\|x^{k}-x^{k+1}\|^2 \leq \frac{2n^2}{m^2L_g}(G(x^{k})-G(x^{k+1}))\, .
\end{equation*} 
In consequence,
\begin{equation}\label{eq:convergence}
\sum_{k=0}^{\infty}\|x^{k}-x^{k+1}\|^2 \leq \frac{2n^2}{m^2L_g}(G(x^{0})-G_{min})\, ,
\end{equation}
where $G_{min}$, defined as $\inf_{\{x\in \tX\}}G(x)$, exists and is finite, because $G$ is l.s.c. on the compact set $\tX$. Suppose that $G_{min}$ is attained at $\underline{x}\in \tX$; then,
\begin{equation}\label{eq:C}
\begin{split}
G(x^0)-G_{min} &=G(x^0)-G(\underline{x})\\
& =\sum_{t=1}^{d}\int_{\frac{1}{n} \sum_{j\in N} a_j \underline{x}_{j,t} }^{\frac{1}{n} \sum_{j\in N} a_j x^0_{j,t}}g_{t}(s)ds + \sum_{j\in N} \frac{a_j}{n} (\tr_j(x^0_j)- \tr_j(\underline{x}_j))\\
&\leq d M\Delta B_g + 2 M B\ ,
\end{split}
\end{equation}
where	the last inequality is due to the mean value theorem and Assumption \ref{ass:example}.
Combining \eqref{eq:convergence} and \eqref{eq:C} yields $\sum_{k=0}^{\infty}\|x^{k}-x^{k+1}\|^2 \leq \frac{2n^2}{m^2L_g} C$. 
This immediately implies
    \begin{equation*}
    	\sum_{k=1}^{K} \|x^{k-1}-x^{k}\|^2\leq \frac{2n^2}{m^2 L_g} C \, .
    \end{equation*}
    The second result of the lemma is then straightforward.
\end{proof}
\bigskip

\begin{proof}[Proof of Proposition \ref{lm:Nash-disagrregative}]
First, notice that the vector function $\zeta : \tX \rightarrow \RR^{d}, x \mapsto \zeta(x) = g\bigl( \frac{1}{n}  \sum_{j\in N} a_j  x_j\bigr)$  is $\frac{L_g M}{\sqrt{n}}$-Lipschitz continuous, i.e., $\|\zeta(x)-\zeta(y)\|\leq \frac{L_g M}{\sqrt{n}}\|x-y\|$, for all $x,y\in \tX$. Indeed, $
\|\zeta(x)-\zeta(y)\|^2 = \sum_{t=1}^{d} |g_{t}( \frac{1}{n} \sum_{j\in N} a_j x_{j,t} )-g_{t} ( \frac{1}{n} \sum_{j\in N} a_j y_{j,t} ) |^2 \leq \linebreak[4] \sum_{t=1}^{d}\big| \frac{L_{g_t}}{n}|\sum_{j\in N}a_j(x_{j,t}-y_{j,t})|\big|^2 \leq \sum_{t=1}^{d} \big( \frac{L_g^2}{n^2}\sum_{j=1}^{n}a_j^2 \sum_{j\in N}(x_{j,t}-y_{j,t})^2\big)\leq \frac{L_g^2 M^2}{n}\|x-y\|^2$, 
where the first inequality is because $g_t$ is $L_{g_t}$-Lipschitz, while the second results from the Cauchy-Schwarz inequality.
   
Next, suppose that the sequence $(x^{k})_{k\in \NN}$ is generated by Algorithm \ref{algo:BCP} with some initial point $x^0\in \tX$. Let us show that, if $\|x^{k-1}-x^{k}\|\leq u_k$, then, $x^{k}$ satisfies the full $\eta(u_k)\Delta$-stability condition and, furthermore, it is an $\eta(u_k)\Delta$-PNE of game $\tGamma$, where $\eta(u_k) = \frac{L_gM u_k}{\sqrt{n}}+\frac{ 2  L_g M \Delta}{n}$.

Since $\|x^{k}-x^{k-1}\|\leq u_k$, one has 
%	\begin{equation}\label{eq:distance}
	$\|  (x_{1}^{k}, \ldots, x_{i-1}^{k}, x_i^{k} , x_{i+1}^{k}, \ldots, x_{n}^{k} )- (x_{1}^{k}, \ldots, x_{i-1}^{k}, x_{i}^{k-1} , \linebreak[4]x_{i+1}^{k-1}, \ldots, x_{n}^{k-1} )  \|\leq u_k$.
%	\end{equation}
	Thus, the Lipschitz continuity of $\zeta$ on $\tX$ and the Lipschitz continuity of $g$ in $x_i$ imply that
%	Then,
\begin{equation}\label{eq:lip inequality}
\begin{split}
&~~\Big \|g \Bigl(  \frac{1}{n} \sum_{j \neq i} a_j  x^{k}_j + \frac{1}{n} a_i x^0_i \Bigr) -   g \Bigl(  \frac{1}{n} \sum_{j < i} a_j  x^{k}_j + \frac{1}{n} a_i x^{k-1}_i + \frac{1}{n}\sum_{j > i} a_j  x^{k-1}_j \Bigr) \Big\| \\
&\leq \Big \|g \Bigl(  \frac{1}{n} \sum_{j \neq i} a_j  x^{k}_j + \frac{1}{n} a_i x^0_i \Bigr) -   g \Bigl(  \frac{1}{n} \sum_{j\in N} a_j  x^{k}_j \Bigr) \Big\| \\
&~~ + \Big \| g \Bigl(  \frac{1}{n} \sum_{j\in N} a_j  x^{k}_j \Bigr) -   g \Bigl(  \frac{1}{n} \sum_{j < i} a_j  x^{k}_j + \frac{1}{n} a_i x^{k-1}_i + \frac{1}{n}\sum_{j > i} a_j  x^{k-1}_j \Bigr) \Big\|\\
&\leq \frac{L_g M\Delta}{n}+\frac{L_g M u_k}{\sqrt{n}}\ .
\end{split}
\end{equation}
%where the second inequality is according to Lemma \ref{prop:lipschitz}, \eqref{eq:distance}, and the Lipschitz continuity of $g$ in $x_i$.
	
The first order condition of optimality of the optimization problem \eqref{alg:proximal1} is the following:  there exists some $p_i$ in the subdifferential of $\tri(x^k_i)$ at $x^k_i$, denoted by $\partial \tri (x^k_i)$, such that for all $x_i\in \tX_i$,
\begin{equation}\label{eq:first order inequality}
\Big\langle  g \Bigl(  \frac{1}{n} \sum_{j < i} a_j  x^{k}_j + \frac{1}{n}\sum_{j \geq i} a_j  x^{k-1}_j \Bigr)  + \frac{a_iL_g}{n} ( x^{k}_i-x_{i}^{k-1} ) + p_i , x_i - x^{k}_i \Big \rangle \geq 0 \, .
\end{equation}
Then, 
	\begin{equation*}
	\begin{split}
	&  \Big\langle g \Bigl(  \frac{1}{n} \sum_{j \neq i} a_j  x^{k}_j + \frac{1}{n} a_i x^+_i \Bigr)  + p_i, x_i - x^{k}_i\Big\rangle \\
	& = \Big\langle g \Bigl(  \frac{1}{n} \sum_{j \neq i} a_j  x^{k}_j + \frac{1}{n} a_i x^+_i \Bigr) - g \Bigl(  \frac{1}{n} \sum_{j < i} a_j  x^{k}_j + \frac{1}{n}\sum_{j \geq i} a_j  x^{k-1}_j \Bigr)  , x_i - x^{k}_i\Big\rangle\\
	&~~ +\Big\langle  g \Bigl(  \frac{1}{n} \sum_{j < i} a_j  x^{k}_j + \frac{1}{n}\sum_{j \geq i} a_j  x^{k-1}_j \Bigr) + \frac{a_iL_g}{n}( x^{k}_i-x_{i}^{k-1} ) + p_i , x_i - x^{k}_i \Big\rangle\\
	&~~ - \Big\langle \frac{a_iL_g}{n}( x^{k}_i-x_{i}^{k-1} ) , x_i - x^{k}_i\Big\rangle\\
	&\geq \Big\langle g \Bigl(  \frac{1}{n} \sum_{j \neq i} a_j  x^{k}_j + \frac{1}{n} a_i x^+_i \Bigr) -   g \Bigl(  \frac{1}{n} \sum_{j < i} a_j  x^{k}_j + \frac{1}{n} a_i x^{k}_i + \frac{1}{n}\sum_{j > i} a_j  x^{k-1}_j \Bigr) , x_i - x^{k}_i\Big\rangle\\
	&~~ - \Big\langle \frac{a_iL_g}{n}( x^{k}_i-x_{i}^{k-1} ) , x_i - x^{k}_i\Big\rangle\\
	&\geq - \Big(\frac{L_gM u_k}{\sqrt{n}}+\frac{L_g M\Delta}{n}+\frac{a_i L_g \Delta}{n} \Big)\|x_i-x^{k}_i\|\\
	&\geq -\Big(\frac{L_gM u_k}{\sqrt{n}}+\frac{2 L_g M \Delta}{n}\Big)\|x_i-x^{k}_i\| = -\eta(u_k)\|x_i-x^{k}_i\|  \ ,
	\end{split}
	\end{equation*}
where the first inequality is due to \eqref{eq:first order inequality}, while the second inequality is due to \eqref{eq:lip inequality} and the Cauchy-Schwarz inequality. 
	Then, according to Lemma \ref{lem:approxeq}, $x^{k}$  satisfies the full $\eta(u_k)\Delta$-stability condition for game $\tGamma$, where $\eta(u_k) = \frac{L_gM u_k}{\sqrt{n}}+\frac{2 L_g M \Delta}{n}$.
	
Furthermore, since $\tri$ is convex on $\tX_i$,
	\begin{equation*}
	\begin{split}
	\bar{f}_i(x_i,x^{k}_{-i})-\bar{f}_i(x^{k})&=\Big\langle g\Bigl(\frac{1}{n} \sum_{j\neq i} a_j x_j^{k}+\frac{1}{n} a_ix_i^+\Bigr), x_{i}-x^{k}_{i}\Big\rangle +\tri(x_i)- \tri(x_i^{k})\\
	&\geq \Big\langle g\Bigl(\frac{1}{n} \sum_{j\neq i} a_j x_j^{k}+\frac{1}{n} a_ix_i^+\Bigr), x_{i}-x^{k}_{i}\Big\rangle + \langle p_i, x_i-x^{k}_i\rangle \\
	&\geq -(\frac{L_gM u_k}{\sqrt{n}}+\frac{2 L_g M \Delta}{n})\|x_i-x^{k}_i\|\, .
	\end{split}
	\end{equation*}	
	Thus, $x^k$ is an  $\eta(u_k)\Delta$-PNE of game $\tGamma$.
\medskip

For any $K \in \NN^*$, there exists some $k^{*}\leq K$ such that $\|x^{k^{*}-1}-x^{k^{*}}\| \leq \frac{\sqrt{2C} n }{m \sqrt{L_g K}}$ according to Lemma \ref{lm:convergence}(2). The conclusion is immediately obtained by taking $\omega(K,n)=\eta\big(\frac{\sqrt{2C} n }{m \sqrt{L_g K}}\big)$.
\end{proof}

\begin{proof}[Proof of Theorem \ref{thm:congestionPNE}]
Proposition \ref{lm:Nash-disagrregative} shows that $x^{k^*}$ is an approximate PNE of game $\tGamma$ (obtained through the convexification of the non-convex auxiliary game $\bGamma$). Then, Theorem \ref{thm:main} is applied to show that (the ``Shapley-Folkman disaggregation'' of $x^{k^*}$) $x^*$ is an approximate PNE of the non-convex auxiliary game $\bGamma$. The use of Theorem \ref{thm:main}  is justified by Lemma \ref{lm: approx game}(1). Finally, Lemma \ref{lm: approx game}(2) is evoked to show that $x^*$ is an approximate PNE of the original non-convex game $\Gamma$.% ($\bar{\Gamma} -> \Gamma$).
\end{proof}

%\medskip

\bibliographystyle{amsplain}
\bibliography{SFGames} % if more than one, comma separated

\end{document}